\begin{document}

\title{A relaxation approach to UBPPs based on equivalent DC penalized factorized matrix programs
\thanks{This work is funded by the National Natural Science Foundation of China
 under project No.11571120.}
}


\author{Yitian Qian         \and
        Shaohua Pan 
}


\institute{Yitian Qian \at
          School of Mathematics, South China University of Technology, Guangzhou.
              \email{mayttqian@mail.scut.edu.cn}
           \and
           Shaohua Pan\at
           School of Mathematics, South China University of Technology, Guangzhou.
           \email{shhpan@scut.edu.cn}
}

\date{Received: date / Accepted: date}

\maketitle

\begin{abstract}
  This paper is concerned with the unconstrained binary polynomial program (UBPP),
  which has a host of applications in many science and engineering fields.
  By leveraging the global exact penalty for its DC constrained SDP reformulation,
  we achieve an equivalent DC penalized SDP, and propose a continuous relaxation
  approach by seeking the critical point of the Burer-Monteiro factorization
  for a finite number of DC penalized SDPs with increasing penalty factors.
  A globally convergent majorization-minimization (MM) method with extrapolation
  is also developed to capture such critical points. Under a mild condition,
  we show that the rank-one projection of the output for the relaxation approach
  is an approximate feasible solution of the UBPP and quantify the upper bound
  of its objective value from the optimal value. Numerical comparisons with
  the SDP relaxation method armed with a special random rounding technique
  and the DC relaxation approach based on the solution of linear SDPs confirm
  the efficiency of the proposed relaxation approach, which can solve the instance of \textbf{20000}
  variables in \textbf{15} minutes and yield an upper bound to the optimal value and
  the known best value with a relative error at most \textbf{1.824\%} and \textbf{2.870\%}, respectively.
\keywords{UBPP \and DC global exact penalty \and Burer-Monteiro factorization \and relaxation approach}
\subclass{90C27 \and 90C22 \and 90C26}
\end{abstract}

 \section{Introduction}\label{sec1}

 In this work, we are interested in the following unconstrained binary polynomial program:
 \begin{equation}\label{UBPP}
  \max_{x\in\{-1,1\}^N}\!\vartheta(x)
 \end{equation}
 where $\vartheta\!:\mathbb{R}^N\to\mathbb{R}$ is a polynomial function of degree $2d$.
 Such a problem has a host of applications; for example, the classical unconstrained
 binary quadratic program (UBQP), for which $\vartheta(x)\!=\!\langle x,Qx\rangle+c^{\mathbb{T}}x$
 for an $N\!\times N$ real symmetric matrix $Q$ and a vector $c\in\!\mathbb{R}^N$,
 is frequently used to formulate the optimization problems on graphs,
 facility locations problems, resources allocation problems, clustering problems,
 set partitioning problems, and various forms of assignment problems
 (see, e.g., \cite{Burer01,Kochenberger14,Luo01,Pardalos92,Phillips94}).
 A more general example is the problem \eqref{UBPP} with
 \begin{equation}\label{vtheta}
  \vartheta(x)=\prod_{i=1}^q\Big(x_i^{\mathbb{T}}Q_ix_i+c_i^{\mathbb{T}}x_i+a_i\Big),
 \end{equation}
 where $x=(x_1;x_2;\ldots;x_q)\in\mathbb{R}^{nq}$, and every $Q_i$ is an $n\times n$
 real symmetric matrix. For the application of the problem \eqref{UBPP} with such $\vartheta$,
 the interested reader is referred to the article \cite{He13}.

 Over the past several decades, many solution methods were developed for this class of
 NP-hard problems (see \cite{Kochenberger14} for the survey), which can be roughly
 classified into the exact method (see, e.g., \cite{Krislock14,Krislock17,Li12,Pham10}),
 the metaheuristic method (see, e.g., \cite{Wu15,Glover10}), and the continuous
 relaxation method (see, e.g., \cite{Anjos02,Chardaire94,Burer01,Goemans95}).
 Among the existing continuous relaxation methods, the semidefinite program (SDP) relaxation
 is the most popular one owing to the significant work \cite{Goemans95}, which states that
 for the max-cut and max 2-satisfiability problem, a special random rounding for the solution
 of the associated linear SDP problem can yield a feasible solution whose expected objective value
 is at least $0.87856$ times the optimal value. The later research works in this line
 mainly focus on the relaxation improvement by adding valid inequalities
 \cite{Helmberg98,Rendl10,Krislock14} or using the double nonnegative cone relaxation
 \cite{Kim16,Fu18}. This work is also concerned with the SDP relaxation,
 but its aim is to design a relaxation approach that can yield a desirable approximate
 feasible solution without any rounding technique.

 Notice that $z\in\!\{-1,1\}^N$ if and only if $Z\!=\!zz^{\mathbb{T}}$ is a rank-one PSD matrix
 with one diagonals. Then, it is not difficult to reformulate the UBPP \eqref{UBPP} as
 the following rank-one SDP:
 \begin{equation}\label{rank-BQP1}
  \min_{X\in\mathbb{S}^{p}}\Big\{f(X)\ \ {\rm s.t.}\ \
  {\rm rank}(X)\le 1,{\rm diag}(X)=e,X\in\mathbb{S}_{+}^{p}\Big\},
 \end{equation}
 where $p$ is a positive integer related to $N$ and $d$, $\mathbb{S}_{+}^{p}$
 denotes the set of all PSD matrices in $\mathbb{S}^p$, the set of
 all $p\times p$ real symmetric matrices, and $f\!:\mathbb{S}^p\to\mathbb{R}$ is
 a smooth function (determined by $\vartheta$) with gradient $\nabla\!f$ being
 Lipschitz continuous relative to $\mathbb{B}_{\Omega}$, a compact set containing
 $(1\!+\!\tau)\Omega-\tau\Omega$ for all $\tau\in[0,1]$ where
 $\Omega\!:=\!\{X\in\mathbb{S}_{+}^p\,|\,{\rm diag}(X)=e\}$. For example, by taking
 $v_d(x):=(1,x_1,\ldots,x_N,x^2_1,x_1x_2,\ldots,x_1x_N,\ldots,x_1^d,\ldots,x_N^d)^{\mathbb{T}}$,
 it is immediate to reformulate \eqref{UBPP} as \eqref{rank-BQP1} with
 $f(X)=\langle C_{\vartheta},X\rangle$ and $p=(\begin{smallmatrix} N+d\\d \end{smallmatrix})$
 for a $p\times p$ real symmetric matrix $C_{\vartheta}$. Of course, one can reformulate
 \eqref{UBPP} as \eqref{rank-BQP1} with a nonlinear $f$ but a small $p$ by
 the structure of $\vartheta$; see Section \ref{sec5.6}.

 Our relaxation approach is based on a global exact penalty for the difference
 of convex (DC) constrained SDP reformulation of the UBPP.
 Since ${\rm rank}(X)\le 1$ if and only if $\|X\|_*\!-\|X\|=0$,
 where $\|X\|_*$ and $\|X\|$ denote the nuclear norm and spectral norm of the matrix $X$,
 the problem \eqref{rank-BQP1} can be equivalently written as the DC constrained SDP problem:
 \begin{equation}\label{DC-BPP}
  \min_{X\in\mathbb{S}^p}\Big\{f(X)\ \ {\rm s.t.}\ \
  \langle I,X\rangle-\|X\|=0,\,{\rm diag}(X)=e,X\in\mathbb{S}_{+}^{p}\Big\}.
 \end{equation}
 The feasible set of \eqref{DC-BPP}, denoted by $\mathcal{F}$,
 is still combinatorial, which is the set of all $p\times p$ rank-one PSD binary matrices.
 Since numerically it is more difficult to handle DC constraints than to
 handle DC objective functions, we pay our attentions to its penalty problem
 \begin{equation}\label{epenalty}
  \min_{X\in\Omega}\Big\{f(X)+\rho(\langle I,X\rangle-\|X\|)\Big\}
 \end{equation}
 where $\rho>0$ is the penalty parameter. By \cite[Proposition 2.3\&Theorem 3.1]{BiPan16},
 the problem \eqref{epenalty} associated to every $\rho\ge\rho^*\!:=\!(1+2p)\alpha_{\!f}$
 has the same global optimal solution set as the original problem \eqref{DC-BPP} does,
 where $\alpha_{\!f}>0$ is the Lipschitz constant of the function $f$ on the set $\Omega$.

 Because it is almost impossible to achieve a global optimal solution of \eqref{epenalty}
 associated to a fixed $\rho\ge\rho^*$, the exact penalty methods based on model \eqref{epenalty}
 still need to solve many DC penalty problems in practice even if the threshold $\rho^*$ is known.
 Furthermore, the convex relaxation methods for the penalty problem \eqref{epenalty} all
 require an eigenvalue decomposition in each iterate, which forms the major computational
 bottleneck and restricts their scalability to large-scale problems. Inspired by the recent
 renewed interest in the Burer-Monteiro factorization method \cite{Burer01,Burer03}
 for low-rank matrix recovery (see, e.g., \cite{SunLuo16,Li18}),
 we consider the factorized form of \eqref{epenalty}:
 \begin{align}\label{BPP-factor}
   &\min_{V\in\mathbb{R}^{m\times p}}f(V^{\mathbb{T}}V)+\rho(\|V\|_F^2-\|V\|^2) \nonumber\\
   &\quad {\rm s.t.}\ \ V\!\in\mathcal{S}\!:=\big\{V\in\mathbb{R}^{m\times p}\,|\,\|V_j\|=1,j=1,\ldots,p\big\}
 \end{align}
 where $1<\!m<p$ is an appropriate integer and $V_j$ is the $j$th column of $V$.
 It is easy to verify that if $X^*$ is a global optimizer of rank $r$ for the problem
 \eqref{epenalty} associated to $\rho$, then $V^*\!=\sqrt{\Lambda^*}(P_I^*)^{\mathbb{T}}$
 is globally optimal to the problem \eqref{BPP-factor} associated to this $\rho$ and $m\ge r$,
 where $\Lambda^*$ is the diagonal matrix consisting of the first $m$ largest
 eigenvalues of $X^*$ and $P_{I}^*$ is the matrix consisting of the first $m$
 columns of the eigenvector matrix $P^*$ of $X^*$; and conversely, if $V^*$ is
 a global optimal solution of the problem \eqref{BPP-factor} associated to $\rho$,
 then $X^*=(V^*)^{\mathbb{T}}V^*$ is globally optimal to the problem \eqref{epenalty}
 with an additional constraint ${\rm rank}(X)\le m$. This means that in a global sense,
 the solution of \eqref{epenalty} can be replaced with the solution of its factorized
 form \eqref{BPP-factor}. In the rest of this work, we call problem \eqref{BPP-factor}
 a DC penalized matrix program though, to be exactly, it is not a DC program due to
 the nonconvex constraint $V\!\in\mathcal{S}$. For convenience, in the sequel we also use
 \begin{equation}\label{psi-fun}
   \widetilde{f}(V)\!:=f(V^{\mathbb{T}}V)\ \ {\rm and}\ \ \widetilde{\psi}(V)\!:=\psi(V^{\mathbb{T}}V)
   \ {\rm for}\ V\in\mathbb{R}^{m\times p}\ {\rm with}\ \psi(Z):=-\|Z\|.
 \end{equation}

 In Section \ref{sec4}, by seeking a finite number of approximate critical points
 for the DC penalized problem \eqref{BPP-factor} with increasing $\rho$,
 we propose a relaxation approach to the UBPP \eqref{UBPP}. The rank-one projection
 of its output is shown to be approximately feasible under a mild condition,
 and the upper bound for its objective value to the optimal value of \eqref{UBPP}
 is also quantified; see Theorem \ref{obj-bound2}. In particular,
 inspired by the recent works on the DC programs (see, e.g., \cite{Pham14,LeThi18,Pang17,LiuPong19}),
 we also propose an MM method with extrapolation to seek a critical point
 of a single penalty problem \eqref{BPP-factor}. This MM method does not
 belong to the DCA framework \cite{Pham97,LeThi18} due to the manifold constraint
 $V\!\in\mathcal{S}$, but the convergence is achieved for the whole sequence generated,
 whose limit lies in a critical point set smaller than that of common DC programs
 \cite{Pang17,LiuPong19}. Our convergence results generalize those of \cite{LiuPong19}
 to the setting where $f$ is allowed to be restricted on
 the manifold $\mathcal{S}$.

 As mentioned above, the SDP relaxations for combinatorial optimization have
 been studied intensively, but most of existing works focus on an upper bound
 for \eqref{UBPP} by solving the SDPs without the rank-one constraint,
 and if a lower bound is expected to obtain from their solutions,
 a tailored rounding technique such as \cite{Goemans95} is required.
 Our relaxation approach is based on the factorized form of a finite number
 of nonconvex SDPs and can provide an approximate feasible output with infeasibility
 lower than $5\times 10^{-9}$, for which the simple MATLAB common ``round''
 yields a feasible solution with the almost same objective value. We notice that
 Pham~Dinh and Le~Thi \cite{Pham10} ever proposed a relaxation approach for UBQPs
 based on the DC penalized problem for \eqref{UBPP} itself. However,
 our DC penalized problem comes from its equivalent DC constrained SDP \eqref{DC-BPP}.
 By comparing the results of Table \ref{table1} with those of \cite[Table 2\&4]{Pham10},
 one can see that the relaxation based on the DC penalized matrix programs is more effective.

 Recently, Jiang et al. \cite{Jiang21} proposed a relaxation approach to the quadratic
 assignment problem by solving a similar penalty problem for the corresponding rank-one
 double nonnegative SDP cone program. Unlike our relaxation approach, their method
 only solves a single penalty problem associated to a well-chosen penalty parameter.
 There indeed exists the best penalty parameter but to capture it is almost impossible,
 and if the chosen penalty parameter is greater than the unknown best one,
 the quality of solution will become worse. This implies that solving a single penalty problem
 will miss those outputs of high quality. For the UBQPs, Wen and Yin \cite{WenY13}
 ever provided a continuous approach by solving the factorization form of a linear SDP.
 Since their factorized form neglects the rank-one constraint,
 the obtained critical point is far from rank-one and can not
 provide a feasible solution to the UBQPs without rounding.

 To confirm the efficiency of our relaxation approach based on model \eqref{BPP-factor}
 (dcFAC for short), we compare its performance with that of SDPRR and dcSDPT3 for \textbf{119}
 Biq Mac Library instances with $100$ to $250$ variables. Among others,
 dcSDPT3 is the DC relaxation approach based on \eqref{epenalty} for which
 the involved linear SDP subproblems are solved with the software SDPT3 \cite{Toh99},
 and SDPRR is the SDP relaxation method armed with the random rounding
 technique in \cite{Goemans95} (see Section \ref{sec5} for the details).
 We also compare the performance of dcFAC with that of dcSDPT3 for \textbf{112}
 UBQPs from the G-set, OR-Library and Palubeckis instances with $800$ to $20000$
 variables, and with that of dcSNCG for \textbf{26} UBPP examples constructed with
 $\vartheta$ from \eqref{vtheta} for $q=2$. Here, dcSNCG is the DC relaxation approach
 based on \eqref{epenalty} for which the quadratic SDP subproblems are solved
 with the dual semismooth Newton method \cite{QiSun06} (see Section \ref{sec5}
 for the details). Numerical comparisons show that dcFAC is comparable to dcSDPT3
 in terms of the quality of the output if the latter uses the same adjusting
 rule of $\rho$ (only possible for $n\le 500$) as dcFAC does,
 otherwise dcFAC is superior to dcSDPT3. Moreover, dcFAC is significantly superior
 to SDPRR and dcSNCG by the quality of the outputs and the CUP time taken.
 For \textbf{119} Biq Mac Library instances, the outputs of dcFAC have
 the relative error with the optimal values at most \textbf{1.824\%} except
 the special \textbf{gka9b} and \textbf{gka10b}, and for \textbf{112} UBQP instances
 with $n\ge 800$, the relative gaps of its outputs from the known best values
 are at most \textbf{2.870\%}.
 \section{Notion and preliminaries}\label{sec2}

 Throughout this paper, $\mathbb{R}^{n_1\times n_2}$ represents the vector space of
 all $n_1\times n_2$ real matrices, equipped with the trace inner product
 $\langle \cdot,\cdot\rangle$ and its induced Frobenius norm $\|\cdot\|_F$,
 and $\mathbb{O}^{p}$ denotes the set of all $p\times p$ orthonormal matrices.
 For $X\in\mathbb{R}^{n_1\times n_2}$, $X_{\!J}$ with some index set $J\subseteq\{1,\ldots,n_2\}$
 means the submatrix of $X$ consisting of those columns $X_j$ with $j\in J$,
 $\|X\|$ and $\|X\|_*$ denote the spectral norm and nuclear norm of $X$, respectively,
 and $\mathbb{B}(X,\varepsilon)$ means the closed ball on Frobenius norm centered at $X$
 with radius $\varepsilon>0$. For every $X\in\mathbb{S}^p$,
 write $\lambda(X):=(\lambda_1(X),\ldots,\lambda_p(X))$ with $\lambda_1(X)\ge\cdots\ge\lambda_p(X)$
 and $\mathbb{O}(X)\!:=\{P\in\mathbb{O}^{p}\,|\,X=P{\rm Diag}(\lambda(X))P^{\mathbb{T}}\}$,
 and for every $P\in\mathbb{O}(X)$, let $P_{I}\in\mathbb{R}^{p\times m}$ denote the submatrix
 consisting of the first $m$ columns of $P$. Let $I$ and $e$ denote
 an identity matrix and a vector of all ones, whose dimensions are known from the context.
 For a closed set $\Delta\subseteq\mathbb{R}^{n_1\times n_2}$,
 $\delta_{\Delta}$ denotes the indicator function of the set $\Delta$, i.e., $\delta_{\Delta}(z)=0$
 if $z\in\Delta$, otherwise $\delta_{\Delta}(z)=+\infty$.
 Write $\mathcal{R}\!:=\{Z\in\mathbb{S}^{p}\,|\,{\rm rank}(Z)\le 1\}$.

 Now we recall from the monograph \cite{RW98} the notion of regular and (limiting) subdifferentials.
 \begin{definition}\label{Gsubdiff-def}
  Consider a function $h\!:\mathbb{R}^{n}\to(-\infty,+\infty]$ and a point
  $x\in\mathbb{R}^{n}$ with $h(x)$ finite.
  The regular subdifferential of $h$ at $x$, denoted by $\widehat{\partial}h(x)$,
  is defined as
  \[
    \widehat{\partial}h(x):=\bigg\{v\in\mathbb{R}^{n}\ \big|\
    \liminf_{x\ne x'\to x}
    \frac{h(x')-h(x)-\langle v,x'-x\rangle}{\|x'-x\|}\ge 0\bigg\};
  \]
  and the (basic) subdifferential (also known as the limiting subdifferential) of $h$ at $x$ is
  \[
    \partial h(x):=\Big\{v\in\mathbb{R}^{n}\ |\  \exists\,x^k\to x\ {\rm with}\ h(x^k)\to h(x),
    v^k\in\widehat{\partial}h(x^k)\ {\rm s.t.}\ v^k\to v\ {\rm as}\ k\to\infty\Big\}.
  \]
 \end{definition}
 \begin{remark}\label{remark-Fsubdiff}
  {\bf(a)} The sets $\widehat{\partial}h(x)$ and $\partial h(x)$
  are closed with $\widehat{\partial}h(x)\!\subseteq\partial h(x)$,
  and the former is also convex. When $h$ is convex,
  they reduce to the subdifferential of $h$ at $x$ in the convex analysis context.
  The vector $\overline{x}$ at which $0\in\partial h(\overline{x})$
  is called a critical point of $h$, and we denote by ${\rm crit}\,h$
  the critical point set of $h$.

  \noindent
  {\bf(b)} When $h$ is an indicator function of a closed set $\Delta$ in $\mathbb{R}^n$,
  $\widehat{\partial}h(x)$ and $\partial h(x)$ respectively reduce to the regular normal
  cone $\widehat{\mathcal{N}}_{\Delta}(x)$ and the normal cone $\mathcal{N}_{\Delta}(x)$.
 \end{remark}

 The following lemma characterizes the subdifferential of the concave function $\psi(Z)=-\|Z\|$.
 \begin{lemma}\label{minus-spectral}
  Fix any $X\!\in\mathbb{S}^{p}$ with $J_{\!X}\!:=\{j\in\{1,\ldots,p\}\,|\,|\lambda_j(X)|=\|X\|\}$.
  Then the subdifferential of $\psi$ defined in \eqref{psi-fun} at $X$ has the following expression:
  \[
   \partial\psi(X)=\left\{\begin{array}{cl}
               \big\{-{\rm sign}(\lambda_j(X))P_jP_j^{\mathbb{T}}\,|\ j\in J_{\!X},P\in\mathbb{O}(X)\big\}&{\rm if}\ X\ne 0,\\
               \big\{\{-e_je_j^{\mathbb{T}},e_je_j^{\mathbb{T}}\}\,|\ j\in J_{\!X},P\in\mathbb{O}(X)\big\}&{\rm if}\ X=0,
               \end{array}\right.
  \]
  where $e_j\in\mathbb{R}^p$ is the vector with the $j$th entry being $1$ and others being $0$;
  and when ${\rm rank}(X)=1$, $\psi$ is regular at $X$ with
  $\widehat{\partial}\psi(X)=\partial\psi(X)=-\partial(-\psi)(X)$.
 \end{lemma}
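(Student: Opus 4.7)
My plan is to compute $\partial\psi(X)$ directly from Definition \ref{Gsubdiff-def}, namely as the outer limit $\limsup_{Y\to X,\,\psi(Y)\to\psi(X)}\widehat\partial\psi(Y)$, by exploiting that $\psi$ is $C^1$ on a dense open subset of $\mathbb{S}^p$ so that the regular subdifferential at nearby generic points is a singleton that can be tracked explicitly.

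First I recall the smooth stratum of the spectral norm: at any $Y\in\mathbb{S}^p$ whose eigenvalue of maximum modulus is simple, standard matrix perturbation theory yields that $\|\cdot\|$ is $C^\infty$ near $Y$ with $\nabla\|Y\|={\rm sign}(\lambda_{k(Y)}(Y))\,P_{k(Y)}P_{k(Y)}^{\mathbb T}$, where $k(Y)$ is the unique maximizing index and $P\in\mathbb{O}(Y)$. Consequently $\psi$ is $C^1$ at such $Y$ and $\widehat\partial\psi(Y)=\{-{\rm sign}(\lambda_{k(Y)}(Y))P_{k(Y)}P_{k(Y)}^{\mathbb T}\}$. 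Since this smooth stratum is dense in $\mathbb{S}^p$, every element of $\partial\psi(X)$ arises as a cluster point of such gradients along a sequence $Y^\nu\to X$. Continuity of $\lambda(\cdot)$ forces $k(Y^\nu)$ to eventually designate an index $j\in J_X$, and convergence of rank-one spectral projectors associated to simple eigenvalues guarantees $P_{k(Y^\nu)}P_{k(Y^\nu)}^{\mathbb T}\to P_jP_j^{\mathbb T}$ for some $P\in\mathbb{O}(X)$. This yields the forward inclusion $\partial\psi(X)\subseteq\{-{\rm sign}(\lambda_j(X))P_jP_j^{\mathbb T}:j\in J_X,\,P\in\mathbb{O}(X)\}$. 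For the reverse inclusion, given any admissible pair $(j,P)$, I would build the approach sequence $Y^\nu=X+{\rm sign}(\lambda_j(X))\varepsilon_\nu\,P_jP_j^{\mathbb T}$ with $\varepsilon_\nu\downarrow0$: this perturbation strictly increases $|\lambda_j(X)|$ by $\varepsilon_\nu$ while leaving the other eigenvalues unchanged, so for small $\varepsilon_\nu$ the maximum-modulus eigenvalue of $Y^\nu$ is simple with eigenvector $P_j$, and the corresponding gradient realizes the prescribed limit.

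For $X=0$ one has $J_X=\{1,\ldots,p\}$ and $\mathbb{O}(X)=\mathbb{O}^p$, so the same argument run along $Y^\nu=\pm\varepsilon_\nu\,qq^{\mathbb T}$ for an arbitrary unit vector $q$ produces all $\pm qq^{\mathbb T}$, which matches the set listed in the lemma under the identification $P_j=Pe_j$. For the regularity assertion when ${\rm rank}(X)=1$, note that such $X$ has exactly one nonzero eigenvalue, simple because the remaining eigenvalues are $0$, so $X$ lies in the smooth stratum of $\|\cdot\|$. Hence $\psi$ is differentiable at $X$, which automatically gives $\widehat\partial\psi(X)=\partial\psi(X)=\{\nabla\psi(X)\}$; and because $-\psi=\|\cdot\|$ is convex and differentiable at this $X$, its convex subdifferential reduces to $\{\nabla\|X\|\}$, yielding the identity $\partial\psi(X)=-\partial(-\psi)(X)$.

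The step I expect to be most delicate is the reverse inclusion when $\lambda_j(X)$ is a repeated eigenvalue of $X$: the naive rank-one perturbation above does not single out a prescribed eigenvector $P_j$ inside the corresponding eigenspace. To handle this, I would first choose $Q\in\mathbb{O}(X)$ with $Q_j=P_j$ and perturb by $\varepsilon_\nu Q_jQ_j^{\mathbb T}$ together with a higher-order generic symmetric perturbation supported on the remaining eigenvectors of that eigenspace, in order to break the degeneracy and force the resulting simple eigenvector to converge to $P_j$. This is standard matrix-perturbation bookkeeping but has to be written out with care.
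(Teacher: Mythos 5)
Your route is genuinely different from the paper's: the paper obtains the formula in two lines by viewing $\psi$ as the spectral function $h\circ\lambda$ with $h(z)=-\|z\|_\infty$, invoking Lewis's transfer theorem \cite[Theorem 6]{Lewis99} to reduce $\partial\psi(X)$ to $\partial h(\lambda(X))$, and then computing the latter via \cite[Corollary 9.21]{RW98}; you instead compute $\partial\psi(X)$ from first principles as an outer limit of gradients over the smooth stratum of the spectral norm. Your reverse inclusion and the rank-one regularity argument are fine; in fact your closing worry about repeated eigenvalues is unnecessary, since $Y^\nu=X+{\rm sign}(\lambda_j(X))\varepsilon_\nu P_jP_j^{\mathbb T}$ commutes with $X$ in the basis $P$ and shifts only the $j$th eigenvalue, so the top-modulus eigenvalue of $Y^\nu$ is automatically simple with eigenvector exactly $P_j$ even when $\lambda_j(X)$ is a repeated eigenvalue of $X$ --- no higher-order generic perturbation is needed.

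There is, however, one genuine gap in your forward inclusion. You assert that because the smooth stratum is dense, ``every element of $\partial\psi(X)$ arises as a cluster point of such gradients.'' That implication is false as a general principle: for $h(x)=|x|$ on $\mathbb{R}$ the differentiability points are dense, the limits of gradients at $0$ are $\{\pm1\}$, yet $\partial h(0)=[-1,1]$. By Definition \ref{Gsubdiff-def}, $\partial\psi(X)$ is the outer limit of $\widehat{\partial}\psi(Y)$ over \emph{all} $Y\to X$ with $\psi(Y)\to\psi(X)$, so you must also rule out contributions from nonsmooth points $Y$. This can be repaired for your specific $\psi$: since $-\psi=\|\cdot\|$ is convex, a standard argument shows $\widehat{\partial}(-c)(Y)$ is empty at any point where the convex function $c$ is not differentiable (a regular subgradient of $-c$ together with convexity of $c$ forces $\partial c(Y)$ to be a singleton). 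Hence $\widehat{\partial}\psi(Y)=\emptyset$ off the smooth stratum and your outer limit does capture all of $\partial\psi(X)$ --- but this step relies on concavity of $\psi$, not on density, and it must be stated; without it the forward inclusion (the half that makes the lemma an equality rather than a containment) is unjustified.
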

 \begin{proof}
 Let $h(z)\!:=-\|z\|_\infty$ for $z\in\mathbb{R}^p$. Notice that $\psi$ is the spectral function
 associated to $h$, i.e., $\psi(Z)=h(\lambda(Z))$ for any $Z\in\mathbb{S}^p$.
 By \cite[Theorem 6]{Lewis99}, we have
 \[
   \partial\psi(X)=\big\{P{\rm Diag}(\xi)P^{\mathbb{T}}\ |\
    P\in\mathbb{O}(X),\xi\in\partial h(\lambda(X))\big\}.
 \]
 In addition, by \cite[Corollary 9.21]{RW98} and the expression of $h$,
 it is easy to calculate that
 \[
   \partial h(\lambda(X))
   =\left\{\begin{array}{cl}
    \big\{-{\rm sign}(\lambda_j(X))e_j\,|\,j\in J_{\!X}\big\}&{\rm if}\ \lambda(X)\ne 0;\\
     \big\{\{e_j,-e_j\}\,|\,j\in J_{\!X}\big\}&{\rm if}\ \lambda(X)=0.
    \end{array}\right.
 \]
 From the last two equations, we obtain the first part. When ${\rm rank}(X)=1$,
 it is easy to check that $h$ is differentiable at $\lambda(X)$,
 and the result holds by \cite[Theorem 6]{Lewis99}.\qed
 \end{proof}

 \section{Stationary points of equivalent models}\label{sec3}

 As well known, when nonconvex models are equivalent in a global sense,
 they generally have different stationary point sets even local optimizer sets.
 Then, it is necessary to discuss the relations between the local optimizers
 of \eqref{epenalty} and those of \eqref{BPP-factor}, and so as their stationary points.
 \begin{proposition}\label{prop-local}
  The following statements hold for the local optimizers of problems \eqref{epenalty} and \eqref{BPP-factor}.
   \begin{itemize}
   \item [(i)] Every feasible point of \eqref{DC-BPP} is a local optimal solution,
               which is a strictly local optimizer of the problem \eqref{epenalty}
               associated to $\rho>\rho^*$, the threshold for the exact penalty problem \eqref{epenalty}.

   \item[(ii)] If $X^*$ is a local optimizer of rank $r$ for the problem \eqref{epenalty} associated to
               $\rho$, then $\sqrt{\Lambda^*}(P_I^*)^{\mathbb{T}}$ with $P^*\!\in\mathbb{O}(X^*)$
               and $\Lambda^*={\rm Diag}(\lambda_1(X^*),\ldots,\lambda_m(X^*))$
               is a local optimizer of \eqref{BPP-factor} associated to this $\rho$ and $m\ge r$.
               Conversely, if $V^*$ is a rank-one local optimizer of \eqref{BPP-factor}
               associated to $\rho>0$, then $(V^*)^{\mathbb{T}}V^*\in\mathcal{F}$
               and is a local optimal solution to \eqref{epenalty} associated to this $\rho$.
   \end{itemize}
  \end{proposition}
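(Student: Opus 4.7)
The plan is to treat (i) and (ii) separately, drawing on the exactness of the penalty from \cite{BiPan16} and the standard link between the lifted SDP and its Burer--Monteiro factorization.

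For (i), I would first observe that $\mathcal{F}=\{xx^{\mathbb{T}}\,|\,x\in\{-1,1\}^{p}\}$ is a finite (hence isolated) set in $\mathbb{S}^{p}$, so every $X^{*}\in\mathcal{F}$ is trivially a local optimizer of \eqref{DC-BPP}. To promote this to a strict local optimizer of \eqref{epenalty} for $\rho>\rho^{*}$, I would take any $X\in\Omega\cap\mathbb{B}(X^{*},\delta)$ with $X\ne X^{*}$ and distinguish two cases. If $\mathrm{rank}(X)\le 1$, then $X_{jj}=1$ forces $X=xx^{\mathbb{T}}$ with $x\in\{-1,1\}^{p}$, so $X\in\mathcal{F}$; shrinking $\delta$ so that $\mathbb{B}(X^{*},\delta)\cap\mathcal{F}=\{X^{*}\}$ contradicts $X\ne X^{*}$. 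If $\mathrm{rank}(X)\ge 2$, then $\langle I,X\rangle-\|X\|=\sum_{j\ge 2}\lambda_{j}(X)>0$, and combining the Lipschitz bound $f(X^{*})-f(X)\le\alpha_{\!f}\|X-X^{*}\|_{F}$ with the quantitative estimate underlying \cite[Prop.\,2.3 \& Thm.\,3.1]{BiPan16} at the threshold $\rho^{*}$, together with the strict inequality $\rho>\rho^{*}$, yields a strict gap.

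For the forward direction of (ii), write $X^{*}=P^{*}\mathrm{Diag}(\lambda(X^{*}))(P^{*})^{\mathbb{T}}$ with $r=\mathrm{rank}(X^{*})\le m$ and set $V^{*}=\sqrt{\Lambda^{*}}(P_{I}^{*})^{\mathbb{T}}$. I would first verify $V^{*}\in\mathcal{S}$ from $(V^{*})^{\mathbb{T}}V^{*}=X^{*}$ and $X^{*}_{jj}=1$. Then, for $V$ near $V^{*}$ in $\mathcal{S}$, the matrix $V^{\mathbb{T}}V$ lies in $\Omega$ and remains near $X^{*}$ by continuity; since $\|V\|_{F}^{2}=\langle I,V^{\mathbb{T}}V\rangle$ and $\|V\|^{2}=\|V^{\mathbb{T}}V\|$, the two objective values coincide, and the local optimality of $X^{*}$ for \eqref{epenalty} transfers directly to $V^{*}$.

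For the reverse direction of (ii), the rank-one factorization $V^{*}=uv^{\mathbb{T}}$ with $\|uv_{j}\|=1$ gives $X^{*}=\|u\|^{2}vv^{\mathbb{T}}\in\mathcal{F}$ and $\|V^{*}\|_{F}^{2}-\|V^{*}\|^{2}=0$. The delicate step is showing that $X^{*}$ is a local optimizer of \eqref{epenalty}. For any $X\in\Omega$ near $X^{*}$ with $\mathrm{rank}(X)\le m$, I would spectrally decompose $X$ and, using the orthogonal freedom in the factorization via a Procrustes-type alignment, choose $V\in\mathcal{S}$ close to $V^{*}$ with $V^{\mathbb{T}}V=X$; the local optimality of $V^{*}$ then transfers the required inequality to $X$. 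The main obstacle is the case $\mathrm{rank}(X)>m$: because $X^{*}$ has rank $1\le m$, nearby $X$ can only acquire extra rank through small eigenvalues $\lambda_{m+1}(X),\ldots,\lambda_{p}(X)$, which feed into the penalty $\rho(\langle I,X\rangle-\|X\|)=\rho\sum_{j\ge 2}\lambda_{j}(X)$. My plan is to replace $X$ by a rank-$m$ matrix $\bar{X}\in\Omega$ (the leading rank-$m$ spectral truncation of $X$, with columns re-normalized to restore diagonal unity) and bound the difference $[f(X)+\rho(\langle I,X\rangle-\|X\|)]-[f(\bar{X})+\rho(\langle I,\bar{X}\rangle-\|\bar{X}\|)]$ from below using Lipschitz continuity of $f$ together with the estimate on $\sum_{j>m}\lambda_{j}(X)$, so that the DC penalty absorbs the deficit and reduces the argument to the low-rank case treated above. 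Controlling this re-normalization error is, I expect, the technical heart of the proof.
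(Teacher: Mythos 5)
Your treatment of part (i) and of the forward direction of part (ii) follows essentially the same route as the paper: for (i) you combine the discreteness of $\mathcal{F}$ with the exact-penalty/error-bound estimate of \cite{BiPan16} at the threshold $\rho^*$, and use the strict positivity of $\langle I,X\rangle-\|X\|$ on $[\mathbb{B}(X^*,\varepsilon)\setminus\{X^*\}]\cap\Omega$ to upgrade local optimality at $\rho^*$ to strict local optimality for $\rho>\rho^*$; for the forward half of (ii) you push the local inequality through the continuous map $V\mapsto V^{\mathbb{T}}V$, exactly as the paper does (the paper merely makes the neighbourhood quantitative via $\varepsilon'=\min(1,\frac{\varepsilon}{2(\|V^*\|+1)})$).

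The converse of (ii) is where you depart from the paper, and where your plan has a genuine gap. You attempt to deduce local optimality of $X^*=(V^*)^{\mathbb{T}}V^*$ for \eqref{epenalty} from the local optimality of $V^*$ for \eqref{BPP-factor}, lifting nearby $X\in\Omega$ of rank at most $m$ back to $\mathcal{S}$ and handling ${\rm rank}(X)>m$ by a renormalized rank-$m$ truncation $\tilde{X}$. That truncation step does not close as described: you must show $f(X)-f(\tilde{X})+\rho\big[(\langle I,X\rangle-\|X\|)-(\langle I,\tilde{X}\rangle-\|\tilde{X}\|)\big]\ge 0$, and although $\|\tilde{X}\|\ge\|X\|$ after renormalization, the gain $\rho(\|\tilde{X}\|-\|X\|)$ can be far smaller than the Lipschitz loss $\alpha_{\!f}\|X-\tilde{X}\|_F$, which is of order $\sum_{i>m}\lambda_i(X)$ (take the discarded spectral mass concentrated on coordinates where the leading eigenvector is nearly zero). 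So the DC penalty "absorbs the deficit" only when $\rho$ is taken of order $\rho^*$ --- at which point you are no longer using the local optimality of $V^*$ at all. The paper sidesteps all of this with a one-line argument you should adopt: a rank-one $V^*\in\mathcal{S}$ makes $(V^*)^{\mathbb{T}}V^*$ a rank-one PSD matrix with unit diagonal, hence an element of $\mathcal{F}$, i.e.\ a feasible point of \eqref{DC-BPP}, and part (i) already asserts that every such point is a local optimizer of \eqref{epenalty}. The local-optimality hypothesis on $V^*$ plays no role beyond supplying feasibility; no lifting or truncation of high-rank perturbations is needed.
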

  \begin{proof}
  {\bf(i)} By the discreteness of $\mathcal{F}$, it is easy to verify that $\mathcal{F}$
  coincides with the local optimizer set of \eqref{DC-BPP}. Pick any $X\in\mathcal{F}$.
  By the proof of \cite[Theorem 3.1(b)]{BiPan16}, $X$ is a local optimizer of \eqref{epenalty}
  associated to $\rho\ge\rho^*$. So, there exists $\varepsilon\in(0,1)$ such that
  \[
    f(Z)+\rho^*(\langle I,Z\rangle-\|Z\|)\ge
    f(X)+\rho^*(\langle I,X\rangle-\|X\|)\quad\forall Z\in\mathbb{B}(X,\varepsilon).
  \]
  Since $\mathcal{F}$ is the set of all $p\times p$ rank-one PSD binary matrices,
  by reducing $\varepsilon$ if necessary, we have $\langle I,Z\rangle-\|Z\|>0$ for all $Z\in[\mathbb{B}(X,\varepsilon)\backslash\{X\}]\cap\Omega$. Together with the last
  inequality, for all $Z\in[\mathbb{B}(X,\varepsilon)\backslash\{X\}]\cap\Omega$
  and $\rho>\rho^*$, it holds that
  \[
   f(X)+\rho(\langle I,X\rangle-\|X\|)
   =f(X)+\rho^*(\langle I,X\rangle-\|X\|)<f(Z)+\rho(\langle I,Z\rangle-\|Z\|).
  \]
  {\bf(ii)} Let $X^*$ be a local optimizer of rank $r$ for \eqref{epenalty}.
  Then there exists $\varepsilon>0$ such that
  \begin{equation}\label{fineq}
    f(X)-\rho(\langle I,X\rangle-\|X\|)\ge f(X^*)-\rho(\langle I,X^*\rangle-\|X^*\|)
    \ \ {\rm for\ all}\ X\in\mathbb{B}(X^*,\varepsilon)\cap\Omega.
  \end{equation}
  From ${\rm diag}(X^*)=e$, each column of $V^*=\!\sqrt{\Lambda^*}(P_I^*)^{\mathbb{T}}$
  has a unit length, which implies that $V^*$ is feasible to \eqref{BPP-factor}.
  Take $\varepsilon'=\min(1,\frac{\varepsilon}{2(\|V^*\|+1)})$.
  For any $V\in\mathbb{B}(V^*,\varepsilon')\cap\mathcal{S}$,
  \[
    \|V^{\mathbb{T}}V\!-\!X^*\|_F
    =\|V^{\mathbb{T}}V\!-\!(V^*)^{\mathbb{T}}V^*\|_F
    \le\|V^{\mathbb{T}}V\!-\!V^{\mathbb{T}}V^*\|_F
      +\|V^{\mathbb{T}}V^*\!-\!(V^*)^{\mathbb{T}}V^*\|_F\le\varepsilon,
  \]
  which along with $V^{\mathbb{T}}V\in\Omega$ means that
  $V^{\mathbb{T}}V\in\mathbb{B}(X^*,\varepsilon)\cap\Omega$.
  Thus, from \eqref{fineq} we get
  \[
    f(V^{\mathbb{T}}V)+\rho(\|V\|_F^2-\|V\|^2)
    \ge f((V^*)^{\mathbb{T}}V^*)+\rho(\|V^*\|_F^2-\|V^*\|^2)
  \]
  for all $V\in\mathbb{B}(V^*,\varepsilon')\cap\mathcal{S}$.
  So, $V^*$ is a local optimizer of \eqref{BPP-factor}.
  The converse of part (ii) is easy to obtain by using part (i)
  and the feasibility of $(V^*)^{\mathbb{T}}V^*$ to \eqref{DC-BPP}. \qed
  \end{proof}
 \begin{definition}\label{Spoint-def1}
  We call $X\in\mathbb{S}^{p}$ a stationary point of \eqref{DC-BPP}
  if $0\in\nabla\!f(X)\!+\!\mathcal{N}_{\mathcal{F}}(X)$, and
  a stationary point of \eqref{epenalty} with $\rho>0$ if
  $0\in\nabla\!f(X)+\rho[I+\partial\psi(X)]+\mathcal{N}_{\Omega}(X)$;
  and call $V\in\mathbb{R}^{m\times p}$ a stationary point of \eqref{BPP-factor}
  with $\rho>0$ if
  $0\in \nabla\!\widetilde{f}(V)\!+\!\rho(2V+\partial\widetilde{\psi}(V))+\mathcal{N}_{\mathcal{S}}(V)$.
 \end{definition}
 \begin{remark}
  The stationary point in Definition \ref{Spoint-def1} for the DC problems \eqref{epenalty}
  and \eqref{BPP-factor} are stronger than the common one in the reference (see \cite{Pham97,Pham14,LiuPong19}),
  where $\partial\psi(X)$ and $\partial\widetilde{\psi}(V)$ are respectively replaced with
  their upper inclusions $-\partial(-\psi)(X)$ and $-\partial(-\widetilde{\psi})(V)$.
 \end{remark}
 \begin{proposition}\label{prop1-Spoint}
  Let $\widehat{\mathcal{F}}$ denote the stationary point set of the problem \eqref{DC-BPP},
  and let $\widehat{\Omega}_{\rho}$ and $\widehat{\mathcal{S}}_{\rho}$ denote
  the stationary point sets of the problems \eqref{epenalty} and \eqref{BPP-factor}
  associated to $\rho>0$. Then,
  \begin{description}
    \item [(i)] $\mathcal{F}=\widehat{\mathcal{F}}=\big\{X\in\mathbb{S}^p\,|\,0\in \nabla\!f(X)+\mathcal{N}_{\Omega}(X)+\mathcal{N}_{\mathcal{R}}(X)\big\}$.

   \item[(ii)] For any $\rho>0$, ${\rm crit}\Phi_{\rho}\subseteq\widehat{\Omega}_{\rho}$
               where $\Phi_{\rho}(Z)\!:=f(Z)+\rho(\langle I,Z\rangle\!-\!\|Z\|)+\delta_{\Omega}(Z)$,
               and every rank-one stationary point of the problem \eqref{epenalty} associated to $\rho>0$
               is a rank-one strictly local optimizer of \eqref{epenalty} associated to those $\rho>\rho^*$.

   \item[(iii)] For each $X\in\mathcal{F}$, there is a neighborhood in which
                the stationary points of \eqref{epenalty} associated to $\rho>\rho^*$
                are all rank-one if their objective values are not more than
               $\Phi_{\rho}(X)$.

   \item[(iv)] If $X\in\widehat{\Omega}_{\rho}$ has a rank $r\le m$,
               then $\sqrt{\Lambda}P_I^{\mathbb{T}}\!\in\widehat{\mathcal{S}}_{\!\rho}$
               where $\Lambda\!=\!{\rm Diag}(\lambda_1(X),\ldots,\lambda_m(X))$
               and $P\in\mathbb{O}(X)$; and conversely, if $V\in\widehat{\mathcal{S}}_{\rho}$ and there exists
               $(W,y)\in\partial\psi(V^{\mathbb{T}}V)\times\mathbb{R}^p$ such that
              $\nabla\!f(V^{\mathbb{T}}V)+\rho(I+W)+{\rm Diag}(y)\in\mathbb{S}_{+}^p$,
              then $V^{\mathbb{T}}V\in\widehat{\Omega}_{\rho}$.
  \end{description}
  \end{proposition}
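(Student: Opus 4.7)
The plan is to handle the four parts in order, each resting on a different tool: (i) discreteness of $\mathcal{F}$, (ii) the limiting-subdifferential sum rule, (iii) a compactness-plus-complementarity argument at the exact-penalty threshold, and (iv) the chain rule for the smooth map $g(V)=V^{\mathbb{T}}V$ together with the spectral characterization of $\partial\psi$ from Lemma~\ref{minus-spectral}.

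For (i), I would note that $\mathcal{F}=\{xx^{\mathbb{T}}:x\in\{-1,1\}^p\}$ is a finite set, so each $X\in\mathcal{F}$ is isolated and $\mathcal{N}_{\mathcal{F}}(X)=\mathbb{S}^p$, which instantly yields $\mathcal{F}\subseteq\widehat{\mathcal{F}}$; the reverse inclusion is built into the definition of the normal cone. The third characterization comes from the same isolation argument applied inside $\Omega\cap\mathcal{R}=\mathcal{F}$, together with the observation that membership in the right-hand set already forces $X\in\Omega\cap\mathcal{R}$. For (ii), the inclusion ${\rm crit}\,\Phi_\rho\subseteq\widehat{\Omega}_\rho$ is a direct application of \cite[Exercise~10.10]{RW98} to the decomposition $\Phi_\rho=(f+\rho\langle I,\cdot\rangle)+\rho\psi+\delta_\Omega$, using smoothness of the first summand and local Lipschitz continuity of $\psi$. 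For the second half, any rank-one stationary point of \eqref{epenalty} lies in $\Omega\cap\mathcal{R}=\mathcal{F}$, and strict local optimality for $\rho>\rho^*$ then follows from Proposition~\ref{prop-local}(i).

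For (iii), I would argue by contradiction: suppose the assertion fails at some $X\in\mathcal{F}$ and pick $\{Y^k\}\subseteq\widehat{\Omega}_\rho$ with $Y^k\to X$, ${\rm rank}(Y^k)\ge 2$, and $\Phi_\rho(Y^k)\le\Phi_\rho(X)=f(X)$. Lipschitz continuity of $f$ on $\Omega$ together with the nonnegativity of $\langle I,\cdot\rangle-\|\cdot\|$ on $\Omega$ pushes $\langle I,Y^k\rangle-\|Y^k\|\to 0$, so the subprincipal eigenvalues $\lambda_j(Y^k)$ for $j\ge 2$ must vanish in the limit. Extracting subsequential limits of the stationarity data $(W^k,\mu^k,N^k)$ and exploiting the complementarity $Y^kN^k=0$ through Lemma~\ref{minus-spectral} should then produce a violation of the exact-penalty threshold $\rho^*=(1+2p)\alpha_{\!f}$. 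This is the main technical hurdle I anticipate, because controlling how the secondary eigenvalues of $Y^k$ redistribute while $Y^k$ stays rank-at-least-two calls for a careful spectral argument.

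Part (iv) has two directions. For the forward direction, given multipliers $(W,\mu,N)$ certifying $X\in\widehat{\Omega}_\rho$ with ${\rm rank}(X)=r\le m$, I set $V=\sqrt{\Lambda}P_I^{\mathbb{T}}$, for which $V^{\mathbb{T}}V=X$ and $\ker V=\ker X$; left-multiplying the stationarity identity by $2V$ turns its four summands into $\nabla\widetilde{f}(V)$, the chain-rule inclusion $2VW\in\partial\widetilde{\psi}(V)$ from \cite[Thm.~10.6]{RW98}, an element of $\mathcal{N}_\mathcal{S}(V)=\{V{\rm Diag}(c):c\in\mathbb{R}^p\}$, and $0$ (from $VN=0$, the columns of $N$ lying in $\ker X=\ker V$), producing the stationarity certificate for $V$. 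For the converse, set $Q:=\nabla f(X)+\rho(I+W)+{\rm Diag}(y)\succeq 0$ from the hypothesis and $M:=-Q\preceq 0$; what remains is to verify $XM=0$, which by PSDness is equivalent to $\langle X,Q\rangle=0$. Expanding this scalar using ${\rm diag}(X)=e$ (since $V\in\mathcal{S}$), the identity $\langle X,W\rangle=-\|X\|$ from Lemma~\ref{minus-spectral}, and Euler's identity $\langle V,W'\rangle=-2\|X\|$ for every $W'\in\partial\widetilde{\psi}(V)$ (coming from the $2$-homogeneity of $\widetilde{\psi}$), and then contracting the stationarity of $V$ against $V$, should yield the matching scalar identity. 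Reconciling the subgradient implicitly chosen by the stationarity of $V$ with the $W$ supplied by the hypothesis, where the PSD condition plays the pivotal role, is the subtle point of the converse.
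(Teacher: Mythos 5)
Parts (ii) and the forward half of (iv) follow the paper's proof essentially verbatim and are fine. But two steps you lean on do not go through as described. In (i), isolation gives $\mathcal{N}_{\mathcal{F}}(X)=\mathbb{S}^p$ and hence $\mathcal{F}=\widehat{\mathcal{F}}$, and it also gives the easy inclusion of the third set into $\mathcal{F}$; however, the inclusion $\mathcal{F}\subseteq\{X\,|\,0\in\nabla\!f(X)+\mathcal{N}_{\Omega}(X)+\mathcal{N}_{\mathcal{R}}(X)\}$ requires $-\nabla\!f(X)\in\mathcal{N}_{\Omega}(X)+\mathcal{N}_{\mathcal{R}}(X)$ for an essentially arbitrary smooth $f$, i.e.\ you must show that the \emph{sum} of the two separate normal cones is all of $\mathbb{S}^p$. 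Isolation only controls $\mathcal{N}_{\mathcal{F}}$; to transfer that to the sum you need the intersection rule $\mathcal{N}_{\Omega\cap\mathcal{R}}(X)\subseteq\mathcal{N}_{\Omega}(X)+\mathcal{N}_{\mathcal{R}}(X)$, which holds only under a qualification condition --- the paper obtains it from the error bound of \cite{BiPan16} and the metric qualification of \cite{Ioffe08}, combined with Clarke regularity of $\mathcal{R}$ at rank-one points from \cite{Luke13}. Your ``same isolation argument'' does not supply this.

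The larger gap is (iii). Along your sequence you only get $\langle I,Y^k\rangle-\|Y^k\|\to 0$, which is perfectly compatible with ${\rm rank}(Y^k)\ge 2$ for every $k$; taking subsequential limits of the stationarity data just returns the rank-one point $X$ and yields no contradiction, and the ``spectral argument'' you defer is precisely the content that would have to be invented. The paper sidesteps all of this with a two-parameter comparison that never uses stationarity of the nearby point: since $X\in\mathcal{F}$ is a local minimizer of $\Phi_{\rho^*}$ (the exact penalty at the threshold), any $X_{\rho}\in\mathbb{B}(X,\varepsilon)\cap\Omega$ with $\Phi_{\rho}(X_{\rho})\le\Phi_{\rho}(X)=f(X)=\Phi_{\rho^*}(X)\le\Phi_{\rho^*}(X_{\rho})$ satisfies $(\rho-\rho^*)(\langle I,X_{\rho}\rangle-\|X_{\rho}\|)\le 0$, and nonnegativity of the penalty term with $\rho>\rho^*$ forces it to vanish \emph{exactly}, whence $X_{\rho}$ is rank one. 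You should adopt this argument. Finally, in the converse of (iv), your reduction to $\langle X,Q\rangle=0$ leaves the residual $e^{\mathbb{T}}(y-y')$ whenever the hypothesized pair $(W,y)$ differs from the stationarity certificate $(W',y')$, so the computation does not close; the intended reading (and the paper's proof) takes $(W,y)$ to be the certificate itself, in which case $VQ=0$ gives $XQ=0$ directly and the scalar identity is unnecessary.
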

  \begin{proof}
  {\bf(i)} Pick any $X\in\mathcal{F}$. Since $\mathcal{F}$
  coincides with the local optimizer set of \eqref{DC-BPP},
  we have $0\in\nabla\!f(X)+\mathcal{N}_{\mathcal{F}}(X)$,
  which means that $X\in\widehat{\mathcal{F}}$.
  Since $\mathcal{F}=\Omega\cap\mathcal{R}$,
  from \cite[Proposition 2.3]{BiPan16} and \cite[Section 3.1]{Ioffe08} we have
  $\mathcal{N}_{\mathcal{F}}(X)\subseteq\mathcal{N}_{\Omega}(X)+\mathcal{N}_{\mathcal{R}}(X)$.
  Since ${\rm rank}(X)=1$, from \cite[Proposition 3.6]{Luke13},
  $\widehat{\mathcal{N}}_{\mathcal{R}}(X)=\mathcal{N}_{\mathcal{R}}(X)$,
  which by \cite[Corollary 10.9]{RW98} and the convexity of $\Omega$
  implies that $\mathcal{N}_{\Omega}(X)+\mathcal{N}_{\mathcal{R}}(X)\subseteq
  \widehat{\mathcal{N}}_{\mathcal{F}}(X)\subseteq\mathcal{N}_{\mathcal{F}}(X)$.
  The two sides show that $\mathcal{N}_{\mathcal{F}}(X)
  =\mathcal{N}_{\Omega}(X)+\mathcal{N}_{\mathcal{R}}(X)$,
  so $0\in\nabla\!f(X)+\mathcal{N}_{\Omega}(X)+\mathcal{N}_{\mathcal{R}}(X)$.
  Thus, $\mathcal{F}\!=\widehat{\mathcal{F}}\subseteq\big\{X\!\in\mathbb{S}^p\,|\,0\!\in \nabla\!f(X)+\mathcal{N}_{\Omega}(X)+\mathcal{N}_{\mathcal{R}}(X)\big\}$.
  Notice that if $0\in\nabla\!f(X)+\mathcal{N}_{\Omega}(X)+\mathcal{N}_{\mathcal{R}}(X)$,
  then $X\in\Omega\cap\mathcal{R}=\mathcal{F}$. The second equality holds.

  \noindent
  {\bf(ii)} Pick any $X\in{\rm crit}\Phi_{\rho}$. By \cite[Exercise 10.10]{RW98}
  and the Lipschitz continuity of $\|\cdot\|$, we have
  \[
    \partial\Phi_{\rho}(X)\subseteq
    \nabla\!f(X)\!+\!\rho\big[I +\partial\psi(X)\big]+\mathcal{N}_{\Omega}(X),
  \]
  which implies that ${\rm crit}\Phi_{\rho}\subseteq\widehat{\Omega}_{\rho}$.
  Since every rank-one stationary point of \eqref{epenalty} lies in $\mathcal{F}$,
  by Proposition \ref{prop-local} (i), it is a rank-one strictly local optimizer
  of \eqref{epenalty} with $\rho>\rho^*$.

  \noindent
  {\bf(iii)} By the proof of \cite[Theorem 3.1(b)]{BiPan16}, every $X\in\mathcal{F}$ is
  a local optimizer of \eqref{epenalty} with $\rho\ge\rho^*$.
  Then, there exists $\varepsilon>0$ such that $\Phi_{\rho^*}(Z)\ge \Phi_{\rho^*}(X)$
  for all $Z\in\mathbb{B}(X,\varepsilon)\cap\Omega$. Fix any $\rho>\rho^*$. Pick
  any $X_{\rho}\in\widehat{\Omega}_{\rho}\cap\mathbb{B}(X,\varepsilon)$. Then,
  from the given assumption it follows that
  \begin{align*}
   f(X_{\rho})+\rho(\langle I,X_{\rho}\rangle-\|X_{\rho}\|)
   &\le\Phi_{\rho}(X)=f(X)=\Phi_{\rho^*}(X)\le\Phi_{\rho^*}(X_{\rho})\\
   &=f(X_{\rho})+\rho^*(\langle I,X_{\rho}\rangle-\|X_{\rho}\|),
  \end{align*}
  which by $\rho>\rho^*$ implies that $\langle I,X_{\rho}\rangle-\|X_{\rho}\|=0$.
  Together with $X_{\rho}\in\mathbb{S}_{+}^p$, it follows that $\lambda_2(X_{\rho})
  =\cdots=\lambda_{p}(X_{\rho})=0$. Hence, the matrix $X_{\rho}$ is rank-one.

  \noindent
  {\bf(iv)} Fix any $V\!\in\mathcal{S}$. By \cite[Theorem 10.6]{RW98},
  we have $\partial\widetilde{\psi}(V)=2V\partial\psi(V^{\mathbb{T}}V)$.
  Since $\widehat{\mathcal{N}}_{\mathcal{S}}(V)=\big\{V{\rm Diag}(w)\,|\, \,w\in\mathbb{R}^p\big\}
    =\mathcal{N}_{\mathcal{S}}(V)$, the set $\mathcal{S}$ is Clarke regular.
  Thus, $V$ is a sttionary point of \eqref{BPP-factor} if and only if
  there exist $W\!\in\partial\psi(V^{\mathbb{T}}V)$ and $y\in\mathbb{R}^{p}$ such that
  \begin{equation}\label{crit-equaV}
   V\big[\nabla\!f(V^{\mathbb{T}}V)\!+\!\rho(I+W)+{\rm Diag}(y)\big]=0.
  \end{equation}
  Now pick any $X\!\in\widehat{\Omega}_{\rho}$ with ${\rm rank}(X)\!\le m$.
  Clearly, $X\in\Omega$. By \cite[Theorem 6.14]{RW98}, we have
  \(
    \mathcal{N}_{\Omega}(X)\!=\{{\rm Diag}(z)\,|\, z\in\mathbb{R}^p\}
     +\mathcal{N}_{\mathbb{S}_{+}^p}(X).
  \)
  From Definition \ref{Spoint-def1}, there exist $W\in\partial\psi(X),
  y\in\mathbb{R}^p$ and $S\in\mathcal{N}_{\mathbb{S}_{+}^p}(X)$
  such that
  \(
    0=\nabla\!f(X)+\rho(I+W)+{\rm Diag}(y)+S.
  \)
  Let $V=\sqrt{\Lambda}P_I^{\mathbb{T}}$ with $P=\mathbb{O}(X)$
  and $\Lambda={\rm Diag}(\lambda_1(X),\ldots,\lambda_m(X))$.
  Notice that $VS=0$. Then
  \(
    V\big[\nabla\!f(X)+\rho(I+W)+{\rm Diag}(y)\big]=0.
  \)
 Since $X=V^{\mathbb{T}}V$, from \eqref{crit-equaV} we have
 $V\in\widehat{\mathcal{S}}_{\!\rho}$.
 For the second part, by taking $X=V^{\mathbb{T}}V$
 and $S=\nabla\!f(V^{\mathbb{T}}V)+\rho(I+W)+{\rm Diag}(y)\in\mathbb{S}_{+}^p$,
 from \eqref{crit-equaV} we obtain $XS=0$. Hence,
 $-S\in\mathcal{N}_{\mathbb{S}_{+}^p}(X)$, and
 $X\in\widehat{\Omega}_{\rho}$ follows by Definition \ref{Spoint-def1}.
 The proof is completed. \qed
 \end{proof}
 \begin{remark}\label{remark1-Spoint}
  By Proposition \ref{prop1-Spoint} (i), for every $X\in\mathcal{F}$,
  there exists a matrix $H\in\mathcal{N}_{\Omega}(X)$ such that
  $-\rho^{-1}(\nabla\!f(X)\!+H)\in\mathcal{N}_{\mathcal{R}}(X)$,
  but $-\rho^{-1}(\nabla\!f(X)+H)$ may not belong to $I+\partial\psi(X)$
  which is the singleton $\{I\!-X/\|X\|\}$ by Lemma \ref{minus-spectral}.
  This means that the rank-one stationary point set of \eqref{epenalty}
  associated to any $\rho>0$ is far smaller than $\mathcal{F}$,
  and from the last part of Proposition \ref{prop1-Spoint} (ii),
  it is also a rank-one strictly local optimizer set when $\rho>\rho^*$.
 \end{remark}
 \section{Relaxation approach based on model \eqref{BPP-factor}}\label{sec4}

  Inspired by the relationship between \eqref{epenalty} and \eqref{BPP-factor},
  we propose the following continuous relaxation approach by seeking a finite
  number of critical points of \eqref{BPP-factor} associated to increasing $\rho$.
  \begin{algorithm}[H]
  \caption{(DC relaxation approach based on \eqref{BPP-factor})}
  \label{Alg-factor}
  \begin{algorithmic}
  \normalsize
  \STATE{Select an integer $m>1$, a small $\epsilon\in(0,1)$, and appropriately large
  $l_{\rm max}\in\mathbb{N}$ and $\rho_{\rm max}\!>0$. Choose $\rho_0>0,\sigma>1$
  and a starting point $V^0\in\mathcal{S}$.}
  \FOR{$l=0,1,2,\ldots,l_{\rm max}$}
  \STATE{Starting from $V^{l}\in\mathcal{S}$, seek a critical point $V^{l+1}$ of the nonconvex problem
             \begin{equation}\label{epenalty-factor}
             \min_{V\in\mathcal{S}}\big\{\widetilde{f}(V)+\rho_{l}[\|V\|_F^2+\widetilde{\psi}(V)]\big\}.
             \end{equation}}
  \STATE{If $\|V^{l+1}\|_F^2-\|V^{l+1}\|^2\le\epsilon$, then stop. Otherwise, $\rho_{l+1}\leftarrow\min\{\sigma\rho_l,\rho_{\rm max}\}$.}
  \ENDFOR
  \end{algorithmic}
  \end{algorithm}
  The core of Algorithm \ref{Alg-factor} is to achieve a stationary point
  of \eqref{epenalty-factor} efficiently. Notice that the function $\widetilde{f}$
  is smooth with gradient $\nabla\!\widetilde{f}$ being Lipschitz continuous relative to $\mathbb{B}_{\mathcal{S}}$,
  a compact set containing $(1+\tau)\mathcal{S}-\tau\mathcal{S}$ for all $\tau\in[0,1]$.
  We denote by $L_{\!\widetilde{f}}$ the Lipschitz constant of $\nabla\!\widetilde{f}$
  relative to $\mathbb{B}_{\mathcal{S}}$. Fix any $Z\in\mathbb{B}_{\mathcal{S}}$.
  From the descent lemma, it follows that for any $V\in\mathbb{B}_{\mathcal{S}}$,
  \begin{subequations}
   \begin{align}
   \label{wfrho}
   \widetilde{f}(V)&\le\widetilde{f}(Z)+\langle\nabla\!\widetilde{f}(Z),V\!-\!Z\rangle
   +(L_{\!\widetilde{f}}/2)\|V\!-\!Z\|_F^2,\\
   \label{-wfrho}
   -\widetilde{f}(V)&\le -\widetilde{f}(Z)-\langle\nabla\!\widetilde{f}(Z),V\!-\!Z\rangle
   +(L_{\!\widetilde{f}}/2)\|V\!-\!Z\|_F^2.
   \end{align}
  \end{subequations}
  Notice that $\widetilde{\psi}$ is concave since $\widetilde{\psi}(V)=-\|V^{\mathbb{T}}V\|=-\|V\|^2$
  for any $V\in\mathbb{R}^{m\times p}$. Hence, $\widetilde{\psi}(V)\le\widetilde{\psi}(Z)+\langle\Gamma, V\!-\!Z\rangle$
  for any $\Gamma\in\partial\widetilde{\psi}(Z)$. Together with \eqref{wfrho}, we have
  \begin{align*}
   \widetilde{f}(V)+\rho_l\big(\|V\|_F^2+\widetilde{\psi}(V)\big)
   &\le\widetilde{F}(V,Z):=\langle\nabla\!\widetilde{f}(Z)\!+\rho_l\Gamma,V\rangle+\rho_l\|V\|_F^2
                          +\frac{L_{\!\widetilde{f}}}{2}\|V\!-\!Z\|_F^2 \\
   &\qquad\qquad\qquad +\widetilde{f}(Z)+\rho_l\widetilde{\psi}(Z)-\langle\nabla\!\widetilde{f}(Z),Z\rangle
   -\rho_l\langle\Gamma,Z\rangle.
  \end{align*}
  Along with $\widetilde{F}(Z,Z)=\widetilde{f}(Z)+\rho_l(\|Z\|_F^2+\widetilde{\psi}(Z))$,
  $\widetilde{F}(\cdot,Z)$ is a majorization of the cost function of \eqref{epenalty-factor} at $Z$.
  By this, we propose an MM method with extrapolation, which is not affiliated to
  the DCA \cite{Pham97} due to $\Gamma^{k}\in\partial\widetilde{\psi}(V^k)$
  and the manifold constraint $V\in\mathcal{S}$.
 \begin{algorithm}
 \renewcommand{\thealgorithm}{A}
 \caption{(MM method with extrapolation for \eqref{epenalty-factor})}
 \label{Alg2}
 \begin{algorithmic}
 \normalsize
 \STATE{Fix an integer $l\ge 0$. Choose $0\le\!\beta_0\!\le\!\overline{\beta}<1$
        and $L_0\ge\underline{L}>L_{\!\widetilde{f}}$. Set $\rho=\rho_l$ and $V^{-1}\!=V^0\!=V^l$.}
 \FOR{$k=0,1,2,\ldots$}
 \STATE{Choose an element $\Gamma^{k}\in\partial\widetilde{\psi}(V^k)$. Let $U^k=V^k+\beta_k(V^k\!-\!V^{k-1})$ and compute
        \begin{equation}\label{Vk-subprob}
         \!V^{k+1}\in\mathop{\arg\min}_{V\in\mathcal{S}}
            \Big\{\langle\nabla\!\widetilde{f}(U^k)+\rho \Gamma^{k},V\rangle
             \!+\!\rho\|V\|_F^2\!+\!\frac{L_k}{2}\|V\!-\!U^k\|_F^2\Big\}.\qquad
       \end{equation}}
 \STATE{Update $\beta_{k}$ by $\beta_{k+1}\in[0,\overline{\beta}]$ and $L_k$ by $L_{k+1}\in[\underline{L},L_0]$}.
 \ENDFOR
\end{algorithmic}
\end{algorithm}
 \begin{remark}\label{remark-Alg2}
  {\bf(a)} Since $L_{\!\widetilde{f}}$ may be unknown in practice,
  one can search a suitable $L_k$ by the descent lemma.
  When $L_{\!\widetilde{f}}$ is known, it suffices to choose
  $L_k\equiv(1+\delta)L_{\!\widetilde{f}}$ for a tiny $\delta>0$. As will be shown below,
  the restriction $L_0>L_{\!\widetilde{f}}$ is necessary for the global
  convergence of Algorithm \ref{Alg2} due to the nonconvexity of \eqref{Vk-subprob}.

  \noindent
  {\bf(b)} By the proof of Proposition \ref{prop1-Spoint} (iv),
  $\partial\widetilde{\psi}(V^k)\!=2V^k\partial\psi(X^k)$
  with $X^k\!=(V^k)^{\mathbb{T}}V^k$. Thus, by Lemma \ref{minus-spectral},
  one can choose $\Gamma^k=-2V^kP_1^k(P_1^k)^{\mathbb{T}}$ with $P^k\in\mathbb{O}(X^k)$.
  Clearly, $P_1^k$ can be achieved by the SVD of $V^k$, whose computation cost
  is cheaper since $V^k$ has less rows. We stipulate that $\Gamma^k$ in
  Algorithm \ref{Alg2} is always chosen in this way.

  \noindent
  {\bf(c)} Let $G^k\!:=\frac{1}{L_k+2\rho}(L_kU^k\!+\!\rho\Gamma^k\!-\!\nabla\!\widetilde{f}(U^k))$.
  Write $J_k:=\{j\,|\,\|G_j^k\|\ne 0\}$. Then $V^{k+1}$ with $V_j^{k+1}\!=\frac{G_j^k}{\|G_j^k\|}$
  for $j\in J_k$ and $V_j^{k+1}=(1,0,\ldots,0)^{\mathbb{T}}\in\mathbb{R}^m$ for $j\notin J_k$
  is an optimal solution of \eqref{Vk-subprob}. So, the computation cost in
  each step of Algorithm \ref{Alg2} is very cheap.
 \end{remark}

 In order to establish the convergence of Algorithm \ref{Alg2},
 we define the potential function
 \begin{equation}\label{Theta-rho}
  \Theta_{\rho}(V,\Gamma,U)\!:=\!\widetilde{f}(V)\!+\!\rho\|V\|_F^2
  +\rho\langle\Gamma,V\rangle+\rho(-\widetilde{\psi})^*(-\Gamma)+\delta_{\mathcal{S}}(V)
  +\frac{\gamma\underline{L}}{2}\|V\!-\!U\|_F^2
 \end{equation}
 for $(V,\Gamma,U)\in\mathbb{R}^{m\times p}\times\mathbb{R}^{m\times p}\times\mathbb{R}^{m\times p}$,
 where $\gamma\in\big(0,\frac{\underline{L}-L_{\!\widetilde{f}}}{2\underline{L}}\big)$ is a constant.
 The following proposition states the properties of the sequence $\{(V^k,\Gamma^k)\}$,
 whose proof is included in Appendix A.
 \begin{proposition}\label{prop-Alg2}
  Let $\{(V^k,\Gamma^k)\}$ be the sequence given by Algorithm \ref{Alg2}. The following results hold.
 \begin{itemize}
  \item[(i)] For each $k\in\mathbb{N}\cup\{0\}$, with
             $\nu_k:=\frac{(\gamma\underline{L}-2L_{\!\widetilde{f}}\beta_k^2)(L_k-L_{\!\widetilde{f}}-\gamma \underline{L})-(L_k-L_{\!\widetilde{f}})^2\beta_k^2}{L_k-L_{\!\widetilde{f}}-\gamma\underline{L}}$,
             it holds that
             \[
               \Theta_{\rho}(V^{k+1},\Gamma^{k},V^k)
               \le\Theta_{\rho}(V^{k},\Gamma^{k-1},V^{k-1})
               -\frac{\nu_k}{2}\|V^{k}\!-\!V^{k-1}\|_F^2;
                 \vspace{-0.3cm}
             \]

  \item [(ii)] The sequence $\{(V^k,\Gamma^k)\}$ is bounded, and hence
               the accumulation point set of the sequence
                $\{(V^k,\Gamma^{k-1},V^{k-1})\}$, denoted by $\Delta_{\rho}$, is nonempty and compact;

  \item[(iii)] When $\overline{\beta}<\!\sqrt{\frac{\gamma\underline{L}(L_0-L_{\!\widetilde{f}}
               -\gamma\underline{L})}{L_0^2-2\gamma\underline{L}L_{\!\widetilde{f}}-L_{\!\widetilde{f}}^2}}$,
               the limit $\lim_{k\to\infty}\Theta_{\rho}(V^k,\Gamma^{k-1},V^{k-1})$ exists,
               and moreover, the function $\Theta_{\rho}$ keeps unchanged on the set $\Delta_{\rho}$;

  \item[(iv)] For all $k\in\mathbb{N}$, the following inequality holds with
              $\alpha=\sqrt{2(L_{\widetilde{f}}+L_{0}+\gamma\underline{L})^2+\rho^2+\gamma^2\underline{L}^2}$:
              \[
                {\rm dist}(0,\partial\Theta_{\rho}(V^k,\Gamma^{k-1},V^{k-1}))
               \!\le\alpha\big[\|V^k\!-\!V^{k-1}\|_F+\|V^{k-1}\!-\!V^{k-2}\|_F\big].
              \]
 \end{itemize}
 \end{proposition}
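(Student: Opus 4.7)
The plan is to treat the four assertions in order, exploiting the majorization inequalities \eqref{wfrho}--\eqref{-wfrho}, the concavity of $\widetilde{\psi}$, Fenchel reciprocity for $(-\widetilde{\psi})^*$, and the optimality of $V^{k+1}$ in \eqref{Vk-subprob}. A pivotal reformulation is that, since $\Gamma^k\in\partial\widetilde{\psi}(V^k)$ by Remark~\ref{remark-Alg2}(b), the Fenchel identity gives $(-\widetilde{\psi})^*(-\Gamma^k)=-\langle\Gamma^k,V^k\rangle+\widetilde{\psi}(V^k)$, so that $\Theta_{\rho}(V^{k+1},\Gamma^k,V^k)$ rewrites as $\widetilde{f}(V^{k+1})+\rho\|V^{k+1}\|_F^2+\rho\langle\Gamma^k,V^{k+1}-V^k\rangle+\rho\widetilde{\psi}(V^k)+\delta_{\mathcal S}(V^{k+1})+\tfrac{\gamma\underline{L}}{2}\|V^{k+1}-V^k\|_F^2$.

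For part (i), the difference $\Theta_{\rho}(V^{k+1},\Gamma^k,V^k)-\Theta_{\rho}(V^k,\Gamma^{k-1},V^{k-1})$ thus splits into the group $\widetilde{f}(V^{k+1})-\widetilde{f}(V^k)+\rho\langle\Gamma^k,V^{k+1}-V^k\rangle$, the nonpositive $\rho[\widetilde{\psi}(V^k)-\widetilde{\psi}(V^{k-1})-\langle\Gamma^{k-1},V^k-V^{k-1}\rangle]$, and the proximal change $\tfrac{\gamma\underline L}{2}(\|V^{k+1}-V^k\|_F^2-\|V^k-V^{k-1}\|_F^2)$; the $\rho\|V\|_F^2$ terms cancel because every column of $V^k,V^{k+1}$ is a unit vector and $\|V^{k+1}\|_F^2=\|V^k\|_F^2=p$. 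I would chain \eqref{wfrho} at $(V^{k+1},U^k)$ with \eqref{-wfrho} at $(V^k,U^k)$ to obtain $\widetilde{f}(V^{k+1})-\widetilde{f}(V^k)\le\langle\nabla\widetilde{f}(U^k),V^{k+1}-V^k\rangle+\tfrac{L_{\widetilde{f}}}{2}(\|V^{k+1}-U^k\|_F^2+\|V^k-U^k\|_F^2)$, then feed the combined $\langle\nabla\widetilde{f}(U^k)+\rho\Gamma^k,V^{k+1}-V^k\rangle$ into the descent inequality obtained from optimality of $V^{k+1}$ in \eqref{Vk-subprob}. After substituting $\|V^k-U^k\|_F^2=\beta_k^2\|V^k-V^{k-1}\|_F^2$ and applying the Young lower bound $\|V^{k+1}-U^k\|_F^2\ge(1-\epsilon)\|V^{k+1}-V^k\|_F^2+\beta_k^2(1-1/\epsilon)\|V^k-V^{k-1}\|_F^2$ with $\epsilon=(L_k-L_{\widetilde{f}}-\gamma\underline{L})/(L_k-L_{\widetilde{f}})$, the $\|V^{k+1}-V^k\|_F^2$ coefficient is exactly absorbed by $\tfrac{\gamma\underline L}{2}$ while the $\|V^k-V^{k-1}\|_F^2$ coefficient algebraically collapses to $-\nu_k/2$ once the common denominator $L_k-L_{\widetilde f}-\gamma\underline L$ is cleared.

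Part (ii) is immediate: $V^k\in\mathcal S$ forces $\|V^k\|_F\equiv\sqrt p$, and the canonical selection $\Gamma^k=-2V^kP_1^k(P_1^k)^{\mathbb T}$ yields $\|\Gamma^k\|_F\le 2\sqrt p$; compactness of $\Delta_\rho$ follows from closedness of the accumulation-point set. For part (iii), the prescribed bound on $\overline\beta$ is engineered so that $\nu_k$ is bounded below by a positive constant uniformly in $k$, whence part (i) produces monotone descent of a potential sequence bounded below by continuity of $\widetilde f,\widetilde\psi$ on the compact manifold $\mathcal S$. Convergence of the potential values together with a telescoping sum delivers $\sum_k\|V^k-V^{k-1}\|_F^2<\infty$, so $V^k-V^{k-1}\to 0$; the reformulation above exhibits $\Theta_{\rho}(V^k,\Gamma^{k-1},V^{k-1})$ as jointly continuous on $\mathcal S\times\mathbb{R}^{m\times p}\times\mathbb{R}^{m\times p}$, so every $(V^*,\Gamma^*,V^*)\in\Delta_\rho$ (note $V^*=U^*$ by $V^k-V^{k-1}\to 0$) inherits the same limit value, giving constancy.

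For part (iv), I would compute the three block partials of $\partial\Theta_{\rho}$ at $(V^k,\Gamma^{k-1},V^{k-1})$: the $V$-block equals $\nabla\widetilde{f}(V^k)+2\rho V^k+\rho\Gamma^{k-1}+\gamma\underline{L}(V^k-V^{k-1})+\mathcal N_{\mathcal S}(V^k)$; the $U$-block is $\{-\gamma\underline L(V^k-V^{k-1})\}$; and the $\Gamma$-block contains $\rho(V^k-V^{k-1})$ by the Fenchel reciprocity $V^{k-1}\in\partial(-\widetilde\psi)^*(-\Gamma^{k-1})$. The first-order condition for the $(k-1)$st instance of \eqref{Vk-subprob} supplies the normal-cone element $-[\nabla\widetilde f(U^{k-1})+\rho\Gamma^{k-1}+2\rho V^k+L_{k-1}(V^k-U^{k-1})]\in\mathcal N_{\mathcal S}(V^k)$, and its substitution collapses the $V$-block to $\nabla\widetilde f(V^k)-\nabla\widetilde f(U^{k-1})-L_{k-1}(V^k-U^{k-1})+\gamma\underline L(V^k-V^{k-1})$. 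Bounding this using Lipschitz continuity of $\nabla\widetilde f$ together with $\|V^k-U^{k-1}\|_F\le\|V^k-V^{k-1}\|_F+\|V^{k-1}-V^{k-2}\|_F$ (since $\beta_{k-1}\le 1$), and combining the three blocks via $(a+b)^2\le 2(a^2+b^2)$ followed by a square root, delivers precisely the constant $\alpha$. I expect part (i) to be the main obstacle: the exact coincidence of the Young split with $\nu_k$ requires the algebra to line up perfectly, and the nonconvex manifold constraint $V\in\mathcal S$ rules out the classical DCA descent lemma, so this bespoke chaining of \eqref{wfrho}--\eqref{-wfrho} with the subproblem inequality is the only viable route.
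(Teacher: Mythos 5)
Your proposal is correct and follows essentially the same route as the paper's Appendix A proof: the same Fenchel identity $(-\widetilde{\psi})^*(-\Gamma^k)=\widetilde{\psi}(V^k)-\langle\Gamma^k,V^k\rangle$, the same chaining of \eqref{wfrho}--\eqref{-wfrho} at $U^k$ with the optimality inequality for \eqref{Vk-subprob}, and the same subdifferential/normal-cone computation for part (iv). The only cosmetic difference is in part (i), where you absorb the cross term by a Young lower bound on $\|V^{k+1}-U^k\|_F^2$ with $\epsilon=(L_k-L_{\widetilde f}-\gamma\underline L)/(L_k-L_{\widetilde f})$ instead of the paper's choice of $\mu$ in $|2(L_k-L_{\widetilde f})\beta_k\langle V^{k+1}-V^k,V^k-V^{k-1}\rangle|$; these are equivalent parametrizations and both yield exactly $-\nu_k/2$.
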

 \begin{remark}\label{remark-VkGamk}
  {\bf(a)} Let $L_0=\kappa L_{\!\widetilde{f}}$ for $\kappa>1$ and
  $\underline{L}=cL_{\!\widetilde{f}}$ for $1<c\le\kappa$. If $\gamma>0$
  is such that $2\gamma c\le c-1$, then
  \(
    \!\sqrt{\frac{\gamma\underline{L}(L_0-L_{\!\widetilde{f}}
    -\gamma\underline{L})}{L_0^2-2\gamma\underline{L}L_{\!\widetilde{f}}-L_{\!\widetilde{f}}^2}}
    =\sqrt{\frac{\gamma c(\kappa-1-\gamma c)}{\kappa^2-2\gamma c-1}}.
  \)
  Now if $\widetilde{f}$ is convex, the restriction on $\overline{\beta}$
  is updated to $\overline{\beta}<\sqrt{\frac{\gamma c(\kappa-1-\gamma c)}{\kappa^2-\gamma c-\kappa}}$
  because the coefficient $L_{\widetilde{f}}$ in the first term
  of \eqref{WL-Vk} can be removed.

  \noindent
  {\bf(b)} By Remark \ref{remark-Fsubdiff} and the proof of part (iv),
  we have $\Delta_{\rho}\subseteq{\rm crit}\Theta_{\rho}$. While from \eqref{subdiff-Thetarho}
  and Definition \ref{Spoint-def1}, one can check that
  $\Pi_1({\rm crit}\Theta_{\rho})\subseteq\widehat{\mathcal{S}}_{\rho}$,
  where $\Pi_1(V,\Gamma,U)=V$ for $(V,\Gamma,U)\in\mathbb{R}^{m\times p}
  \times\mathbb{R}^{m\times p}\times\mathbb{R}^{m\times p}$.
  The two sides imply that $\Pi_1(\Delta_{\rho})\subseteq\Pi_1({\rm crit}\Theta_{\rho})
  \subseteq\widehat{\mathcal{S}}_{\rho}$.
 \end{remark}

  By \cite[Proposition 11.21]{RW98}, $(-\widetilde{\psi})^*(U)=\frac{1}{4}\|U\|_*^2$
  for $U\in\mathbb{R}^{m\times p}$. Clearly, $(-\widetilde{\psi})^*$ is semialgebraic.
  Since $\delta_{\mathcal{S}}$ and $(-\widetilde{\psi})^*$ are semialgebraic,
  $\Theta_{\rho}$ is semialgebraic and is a KL function. By Proposition \ref{prop-Alg2}
  and Remark \ref{remark-VkGamk} (b), using the same arguments as those for \cite[Theorem 3.2]{Attouch10}
  or \cite[Theorem 3.1]{LiuPong19} yields the following convergence theorem.
 \begin{theorem}\label{theorem-Alg2}
  Let $\{(V^k,\Gamma^k)\}$ be the sequence generated by Algorithm \ref{Alg2} for solving
  \eqref{BPP-factor} associated to $\rho$ with $\overline{\beta}$
  satisfying the restriction in Proposition \ref{prop-Alg2} (iii). Then,
  $\{V^k\}$ is convergent and its limit $V^*$ is a stationary point of the problem \eqref{BPP-factor}
  associated to $\rho$. If the limit $V^*$ is rank-one, then $(V^*)^{\mathbb{T}}V^*$
  is a local optimal solution of the problem \eqref{DC-BPP}.
 \end{theorem}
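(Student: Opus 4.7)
The plan is to follow the standard Kurdyka--\L{}ojasiewicz (KL) convergence template of Attouch--Bolte--Svaiter, specialized in \cite{LiuPong19}, and then invoke results already established earlier in the paper. All three ingredients of the template have been proved for the potential function $\Theta_{\rho}$ in Proposition \ref{prop-Alg2}: sufficient decrease (part (i)), subgradient-based relative error (part (iv)), and constancy on the set of accumulation points (part (iii)). What remains is to verify the KL property and then glue these facts together.

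First I would verify that $\Theta_{\rho}$ is a KL function. The reference $(-\widetilde{\psi})^{*}(U)=\tfrac{1}{4}\|U\|_{*}^{2}$, the polynomial $\widetilde{f}$, the quadratic terms, and $\delta_{\mathcal{S}}$ (since $\mathcal{S}$ is defined by polynomial equalities $\|V_{j}\|^{2}=1$) are all semialgebraic, and the sum of finitely many semialgebraic functions is semialgebraic. Hence $\Theta_{\rho}$ is semialgebraic and therefore satisfies the KL inequality at every point of its domain.

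Next I would exploit the restriction on $\overline{\beta}$ in Proposition \ref{prop-Alg2}(iii) to see that the constants $\nu_{k}$ in part (i) are bounded below by some $\underline{\nu}>0$. Summing the decrease inequality across $k$ yields $\sum_{k}\|V^{k+1}\!-V^{k}\|_{F}^{2}<\infty$ and the existence of $\lim_{k}\Theta_{\rho}(V^{k},\Gamma^{k-1},V^{k-1})$. Combining this with the relative error bound of part (iv), the constancy of $\Theta_{\rho}$ on $\Delta_{\rho}$ from part (iii), and the KL inequality at points of $\Delta_{\rho}$, the standard telescoping argument (as in \cite[Theorem 3.2]{Attouch10} or \cite[Theorem 3.1]{LiuPong19}) upgrades this to $\sum_{k}\|V^{k+1}\!-V^{k}\|_{F}<\infty$. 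Therefore $\{(V^{k},\Gamma^{k-1},V^{k-1})\}$ is Cauchy and converges to some $(V^{*},\Gamma^{*},V^{*})\in\Delta_{\rho}$; in particular $\{V^{k}\}\to V^{*}$. By Remark \ref{remark-VkGamk}(b), $\Pi_{1}(\Delta_{\rho})\subseteq\widehat{\mathcal{S}}_{\rho}$, so $V^{*}$ is a stationary point of \eqref{BPP-factor} associated to $\rho$.

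For the final assertion, suppose $V^{*}$ is rank-one. Since $V^{*}\in\mathcal{S}$, every column has unit length, so $\mathrm{diag}((V^{*})^{\mathbb{T}}V^{*})=e$ and $(V^{*})^{\mathbb{T}}V^{*}$ is a rank-one PSD binary matrix, i.e., $(V^{*})^{\mathbb{T}}V^{*}\in\mathcal{F}$. Proposition \ref{prop-local}(i) (equivalently, the discreteness argument in Proposition \ref{prop1-Spoint}(i)) then gives that every element of $\mathcal{F}$ is a local optimal solution of \eqref{DC-BPP}, which yields the conclusion. The main technical obstacle is the KL telescoping itself, but it is entirely routine given the three estimates in Proposition \ref{prop-Alg2}; the only delicate point is checking that $\nu_{k}$ is uniformly positive under the sharper restriction on $\overline{\beta}$ appearing in Proposition \ref{prop-Alg2}(iii), which is precisely why that restriction was imposed.
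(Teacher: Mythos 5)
Your proposal is correct and follows essentially the same route as the paper: the paper's proof also consists of observing that $\Theta_{\rho}$ is semialgebraic (hence KL), invoking the three estimates of Proposition \ref{prop-Alg2} together with Remark \ref{remark-VkGamk}(b) in the standard Attouch--Bolte/Liu--Pong telescoping argument, and deducing the final claim from $(V^{*})^{\mathbb{T}}V^{*}\in\mathcal{F}$ and Proposition \ref{prop-local}(i). Your write-up merely spells out the steps the paper leaves to the cited references.
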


  Next we focus on the stopping criterion of Algorithm \ref{Alg-factor}. In the sequel, we say that
  Algorithm \ref{Alg-factor} exits normally if it stops at some $l<l_{\rm max}$.
  To show that Algorithm \ref{Alg-factor} armed with Algorithm \ref{Alg2} can exit normally,
  we need the following technical lemma, which states that if there exists an eigenvector
  associated to $\lambda_{1}((V^{l})^{\mathbb{T}}V^{l})$ having no zero entries,
  the gap $\|V^{l,1}\|_F^2-\|V^{l,1}\|^2$ is small.
 \begin{lemma}\label{complexity}
  Fix an integer $l\ge 0$. Suppose that there exists $P\in\mathbb{O}((V^{l})^{\mathbb{T}}V^{l})$
  such that $P_1$ has no zero entries. Let $|P_{1\kappa}|:=\min_{1\le j\le p}|P_{1j}|$ and
  $\varpi:=(L_0\!+2L_{\!\widetilde{f}})\sqrt{p}+\|\nabla\!\widetilde{f}(\frac{1}{\sqrt{m}}E)\|_F$.
  Then, when $\rho_l>\overline{\rho}:=\frac{L_0p+\varpi}{|P_{1\kappa}|c_0}$
  for some $c_0\in(0,1)$, $\|V^{l,1}\|_F^2-\|V^{l,1}\|^2\le c_0$.
 \end{lemma}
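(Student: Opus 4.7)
The plan is to exploit the closed-form expression for the first inner iterate $V^{l,1}$ given by Remark~\ref{remark-Alg2}(c), show that when $\rho_l$ is sufficiently large each column of $V^{l,1}$ is nearly aligned with $\mathrm{sign}(P_{1j})u/\|u\|$ (where $u:=V^lP_1$), and then invoke the Eckart--Young identity to convert this near-rank-one structure into the bound on $\|V^{l,1}\|_F^2-\|V^{l,1}\|^2$.

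I would start by instantiating Algorithm~\ref{Alg2} at $k=0$ with $V^{-1}=V^0=V^l$ (so $U^0=V^l$) and the canonical choice $\Gamma^0=-2V^lP_1P_1^{\mathbb{T}}$ from Remark~\ref{remark-Alg2}(b). Remark~\ref{remark-Alg2}(c) then gives $V^{l,1}_j=G^0_j/\|G^0_j\|$ (whenever $\|G^0_j\|\neq 0$) with $G^0=(L_0V^l-\nabla\widetilde{f}(V^l)+2\rho_lV^lP_1P_1^{\mathbb{T}})/(L_0+2\rho_l)$, so columnwise $G^0_j=\mu_lP_{1j}u+R^0_j$ where $\mu_l:=2\rho_l/(L_0+2\rho_l)$ and $R^0:=(L_0V^l-\nabla\widetilde{f}(V^l))/(L_0+2\rho_l)$. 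Since $\tfrac{1}{\sqrt m}E\in\mathcal{S}$, the $L_{\widetilde{f}}$-Lipschitz continuity of $\nabla\widetilde{f}$ on $\mathbb{B}_{\mathcal{S}}$ together with $\|V^l\|_F=\sqrt p$ and the triangle inequality yields $\|L_0V^l-\nabla\widetilde{f}(V^l)\|_F\le(L_0+2L_{\widetilde f})\sqrt p+\|\nabla\widetilde{f}(\tfrac{1}{\sqrt m}E)\|_F=\varpi$, hence $\|R^0\|_F\le\varpi/(L_0+2\rho_l)$; I also use $\|u\|^2=\lambda_1((V^l)^{\mathbb{T}}V^l)\ge 1$, which holds because $(V^l)^{\mathbb{T}}V^l$ has unit diagonal.

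Next, the elementary ratio estimate $\bigl\|(a+b)/\|a+b\|-a/\|a\|\bigr\|\le 2\|b\|/\|a+b\|$ applied with $a=\mu_lP_{1j}u$ and $b=R^0_j$, combined with $a/\|a\|=s_ju/\|u\|$ for $s=\mathrm{sign}(P_1)\in\{-1,1\}^p$ (well defined by the hypothesis that $P_1$ has no zero entries), gives a columnwise bound on $V^{l,1}_j-s_ju/\|u\|$. The threshold $\rho_l>\overline\rho$ is strong enough to ensure $\mu_l|P_{1\kappa}|\|u\|>\|R^0\|_F$ with comfortable slack, which both excludes the fallback case $\|G^0_j\|=0$ of Remark~\ref{remark-Alg2}(c) (so the closed form is valid for every $j$) and yields
\[
\bigl\|V^{l,1}-(u/\|u\|)s^{\mathbb{T}}\bigr\|_F^2\le\frac{4\|R^0\|_F^2}{\bigl(\mu_l|P_{1\kappa}|\|u\|-\|R^0\|_F\bigr)^2}.
\]
Since $M:=(u/\|u\|)s^{\mathbb{T}}$ has rank one, the Eckart--Young identity $\|V^{l,1}\|_F^2-\|V^{l,1}\|^2=\min_{\operatorname{rank}(M')\le 1}\|V^{l,1}-M'\|_F^2\le\|V^{l,1}-M\|_F^2$ reduces the claim to showing that the right-hand side above is at most $c_0$ under the threshold on $\rho_l$.

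The main obstacle will be the final arithmetic: one must translate $\rho_l>(L_0p+\varpi)/(|P_{1\kappa}|c_0)$ into $4\|R^0\|_F^2\le c_0\bigl(\mu_l|P_{1\kappa}|\|u\|-\|R^0\|_F\bigr)^2$ using only the coarse bound $\|u\|\ge 1$, and track how the $L_0p$ summand in the numerator of $\overline\rho$ acts as the slack that absorbs the interplay between the Frobenius-scale residual bound $\|R^0\|_F\le\varpi/(L_0+2\rho_l)$ and the spectral-scale denominator $\mu_l|P_{1\kappa}|\|u\|$ (where a Frobenius-to-spectral factor of $\sqrt p$ may be replaced by $p$ for the sake of a clean statement). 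Everything else is a routine application of the closed-form subproblem solution and Eckart--Young.
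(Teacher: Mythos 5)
Your overall route is the same as the paper's: instantiate the closed form of $V^{l,1}$ from Remark \ref{remark-Alg2}(c), exhibit a nearby rank-one matrix, and convert via the identity $\|V\|_F^2-\|V\|^2=\min_{{\rm rank}(M')\le 1}\|V-M'\|_F^2$. (A minor slip: since $\Gamma^0=-2V^lP_1P_1^{\mathbb{T}}$, your $G^0$ should contain $-2\rho_lV^lP_1P_1^{\mathbb{T}}$ rather than $+2\rho_lV^lP_1P_1^{\mathbb{T}}$; this only flips the sign of the comparison matrix and is harmless.) The genuine problem is the step you yourself defer as the ``main obstacle'': with your choice of comparison matrix the final arithmetic does not close under the stated threshold. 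Because you compare the column-normalized $V^{l,1}$ to the column-normalized rank-one matrix $(u/\|u\|)s^{\mathbb{T}}$, you are forced through the ratio estimate $\|(a+b)/\|a+b\|-a/\|a\|\|\le 2\|b\|/\|a+b\|$ and its factor $2$. Tracking constants with $\|R^0\|_F\le\varpi/(L_0+2\rho_l)$ and $\|u\|\ge 1$, your bound reads $\|V^{l,1}\|_F^2-\|V^{l,1}\|^2\le\big(\frac{2\varpi}{2\rho_l|P_{1\kappa}|-\varpi}\big)^2$, which is $\le c_0$ only when $\rho_l\ge\frac{(2+\sqrt{c_0})\varpi}{2\sqrt{c_0}|P_{1\kappa}|}$. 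The lemma's $\overline{\rho}=\frac{L_0p+\varpi}{|P_{1\kappa}|c_0}$ does not dominate this in general: as $c_0\to 1$ the requirement approaches $\frac{3\varpi}{2|P_{1\kappa}|}$ while $\overline{\rho}$ approaches $\frac{L_0p+\varpi}{|P_{1\kappa}|}$, and $L_0p\ge\varpi/2$ can fail because $\varpi$ contains the uncontrolled summand $\|\nabla\!\widetilde{f}(\frac{1}{\sqrt{m}}E)\|_F$. So as sketched your argument proves the lemma only for a threshold larger than $\overline{\rho}$ by a bounded factor.

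The paper avoids this by not normalizing the comparison matrix. Writing $D={\rm Diag}(1/\|G_1^0\|,\ldots,1/\|G_p^0\|)$ so that $V^{l,1}=G^0D$, it compares to the rank-one matrix $\overline{\Gamma}^0=\frac{1}{2}\Gamma^0D$, whose columns need not be unit vectors. Then $V^{l,1}-\overline{\Gamma}^0=\frac{-L_0}{2(L_0+2\rho_l)}\Gamma^0D+\frac{1}{L_0+2\rho_l}[L_0U^0-\nabla\!\widetilde{f}(U^0)]D$, and with $\|G_j^0\|\ge\frac{\rho_l|P_{1\kappa}|}{L_0+2\rho_l}$ the two terms are bounded by $\frac{L_0p}{\rho_l|P_{1\kappa}|}$ and $\frac{\varpi}{\rho_l|P_{1\kappa}|}$, which sum to exactly $c_0$ at $\rho_l=\overline{\rho}$; since $c_0<1$, the squared distance is then $\le c_0^2\le c_0$. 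If you want to keep your normalized target $(u/\|u\|)s^{\mathbb{T}}$, you must either enlarge the threshold or find sharper control than $\|u\|\ge 1$; with the estimates as written, the stated constant is out of reach.
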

 \begin{proof}
  Recall that $\Gamma^0=-2V^{0}P_1P_1^{\mathbb{T}}$ by Remark \ref{remark-Alg2} (b).
  After a simple calculation, for each $j\in\{1,2,\ldots,p\}$,
  we have $\|\Gamma_{\!j}^0\|=2\|V^0\||P_{1j}|\ge 2|P_{1j}|\ge2|P_{1\kappa}|>0$,
  where the first inequality is due to $\|V^0\|\ge 1$ implied by $\|V^0\|_F=\sqrt{p}$.
  Notice that $V^0\in\mathcal{S}$ and
  $\|\nabla\!\widetilde{f}(U^0)\|_F\le\!L_{\!\widetilde{f}}\|V^0\!-\!\frac{1}{\sqrt{m}}E\|_{F}
  +\|\nabla\!\widetilde{f}(\frac{1}{\sqrt{m}}E)\|_F$.
  It is not hard to verify that
  $\|L_0U^0\!-\!\nabla\!\widetilde{f}(U^0)\|_F\le
  L_0\|V^0\|_F+\|\nabla\!\widetilde{f}(U^0)\|_F\le\varpi$.
  Fix any $\rho>\overline{\rho}$. By Remark \ref{remark-Alg2} (c),
  $G^0=\frac{\rho}{L_0+2\rho}\Gamma^0+\frac{L_0U^0\!-\!\nabla\!\widetilde{f}(U^0)}{L_0+2\rho}$.
  Then, for every $j\in\{1,\ldots,p\}$,
  \begin{equation}\label{Gj-ineq}
   \|G_{\!j}^0\|\ge\frac{\rho}{L_0\!+2\rho}\|\Gamma_{j}^0\|-\frac{1}{L_0\!+2\rho}\|[L_0U^0\!-\!\nabla\!\widetilde{f}(U^0)]_j\|\\
   \ge \frac{\rho\|\Gamma_{j}^0\|-\varpi}{L_0\!+2\rho}\ge\frac{\rho|P_{1\kappa}|}{L_0\!+2\rho},
  \end{equation}
  where the third inequality is using $\|\Gamma_{\!j}^0\|\ge 2|P_{1\kappa}|$
  and $\rho>\overline{\rho}$. This means that $G^0$ has no zero columns.
  Define $\overline{G}^0\!:=G^0D$ with $D={\rm Diag}(\frac{1}{\|G_1^0\|},\ldots,\frac{1}{\|G_p^0\|})$.
  Clearly, $V^{l,1}=V^1=\overline{G}^0=\frac{\rho}{L_0+2\rho}\Gamma^0D
  \!+\!\frac{1}{L_0+2\rho}[L_0U^0\!-\!\nabla\!\widetilde{f}(U^0)]D$.
  Let $\overline{\Gamma}^0\!:=\frac{1}{2}\Gamma^0D$. Clearly, ${\rm rank}(\overline{\Gamma}^0)
  ={\rm rank}(\Gamma^0)=1$. Then,
  \begin{align*}
   {\rm dist}(\overline{G}^0,\mathcal{R})
   &\le\|\overline{G}^0\!-\overline{\Gamma}^0\|_F
     \le\frac{2L_0\|V^0\|}{2(L_0\!+\!2\rho)}\|D\|_F
     +\frac{1}{L_0\!+\!2\rho}\big\|[L_0U^0\!-\!\nabla\!\widetilde{f}(U^0)]D\big\|_F\\
   & \le\frac{L_0\|V^0\|}{L_0\!+\!2\rho}\|D\|_F+\frac{\varpi}{L_0\!+\!2\rho}\|D\|
    \le\frac{L_0p}{\rho|P_{1\kappa}|}+\frac{\varpi}{\rho|P_{1\kappa}|}\le c_0,
  \end{align*}
  where the second inequality is using $\|\Gamma^0\|\le 2\|V^0\|$,
  the third and the fourth are using \eqref{Gj-ineq},
  and the last one is due to $\rho\ge\overline{\rho}$.
  The proof is then completed.  \qed
 \end{proof}
  \begin{proposition}\label{defined-Algfactor1}
   Fix an integer $l\ge 0$. Let $\{V^k\}$ be the sequence generated by Algorithm \ref{Alg2}
   from $V^0=V^{l}$, and let $\varepsilon\in\!(0,c_0]$ be a given tolerance,
   where $c_0$ is the constant from Lemma \ref{complexity}. Then,
   when $\rho_{l}\ge\widehat{\rho}:=\max\big\{\frac{\varpi}{(1-\sqrt{1\!-0.5p^{-1}\varepsilon})\sqrt{1\!-c_0}},
  \frac{2\varpi}{\varepsilon}\big\}$ with $\varpi=\!8(L_{\!\widetilde{f}}\!+\!L_0)p
  +2\sqrt{p}\|\nabla\!\widetilde{f}(\frac{1}{\sqrt{m}}E)\|_F$,
  \begin{itemize}
  \item [(i)] for each integer $k\ge 0$ with $\frac{\varepsilon}{2}\le p-\|V^k\|^2\le c_0$,
              $\|V^{k+1}\|^2\ge\|V^k\|^2+\big(1-\!\sqrt{1-\!0.5p^{-1}\varepsilon}\big)\sqrt{1-c_0}$;

    \item [(ii)] if there exists $P\!\in\mathbb{O}((V^{l})^{\mathbb{T}}V^{l})$ such that $P_1$ has no zero entries
               and $\rho_l>\max(\widehat{\rho},\overline{\rho})$ with $\overline{\rho}$ from Lemma \ref{complexity},
               there is an integer $1\le\overline{k}\le\lceil\frac{c_0}{(1-\sqrt{1-0.5p^{-1}\varepsilon})\sqrt{1-c_0}}\rceil+1$
               such that $\|V^k\|_F^2-\!\|V^k\|^2\le\varepsilon$ for all $k\ge\overline{k}$.
  \end{itemize}
 \end{proposition}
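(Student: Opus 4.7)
The plan is to prove (i) by direct analysis of the closed-form update in Algorithm \ref{Alg2}, then derive (ii) by iterating (i) from the starting estimate of Lemma \ref{complexity}.

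For part (i), I would invoke the explicit expression from Remark \ref{remark-Alg2}(b)--(c): $V^{k+1}_j = G^k_j/\|G^k_j\|$ with $G^k_j(L_k+2\rho_l) = -2\rho_l\|V^k\|P_{1j}^k u^k + H^k_j$, where $u^k := V^k P_1^k/\|V^k\|$ is the unit top left singular vector of $V^k$ and $H^k_j := L_k U^k_j - (\nabla\widetilde{f}(U^k))_j$ is a perturbation whose aggregate Frobenius norm is controlled by $\varpi$ (the factor $8$ and the additional $\sqrt{p}$ in the present $\varpi$ compared to Lemma \ref{complexity} come from bounding the extrapolation point $U^k$ in place of $V^0$, along the same lines). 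Using the Rayleigh-quotient inequality $\|V^{k+1}\|^2 \geq \|V^{k+1}P_1^k\|^2$ with the test vector $P_1^k$, and substituting the closed form for $V^{k+1}_j$, the right-hand side decomposes into a dominant component along $u^k$ with coefficient $-2\rho_l\|V^k\|\sum_j (P_{1j}^k)^2/r_j$ and a perturbation component $\sum_j P_{1j}^k H^k_j/r_j$, where $r_j$ (the Euclidean norm of the numerator of $G^k_j(L_k+2\rho_l)$) satisfies the two-sided bound $2\rho_l\|V^k\||P_{1j}^k| - \|H^k_j\| \leq r_j \leq 2\rho_l\|V^k\||P_{1j}^k| + \|H^k_j\|$. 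Combining these estimates with $\|V^k\|^2 \geq p - c_0$ and the two branches of $\rho_l \geq \widehat\rho$, I would separately extract the factor $\sqrt{1-c_0}$ from the dominant term and bound the perturbation by $\varepsilon/2$, which together produce the required per-iteration increment.

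For part (ii), the extra assumptions trigger Lemma \ref{complexity}, giving $\|V^1\|_F^2 - \|V^1\|^2 \leq c_0$, i.e., $p - \|V^1\|^2 \leq c_0$. As long as $p - \|V^k\|^2 \in [\varepsilon/2, c_0]$, part (i) forces a strict decrease of the gap by at least $\Delta := (1-\sqrt{1-0.5p^{-1}\varepsilon})\sqrt{1-c_0}$ per step, so within at most $\lceil c_0/\Delta\rceil$ iterations after $V^1$ the gap falls below $\varepsilon/2$. One further iteration may raise the gap, but a variant of the argument in (i) (with $\varepsilon/2$ playing the role of $\varepsilon$ in the perturbation bound, or via the monotonicity of $\Theta_{\rho_l}$ from Proposition \ref{prop-Alg2}(i)) confines it to at most $\varepsilon$ thereafter, giving $\bar k \leq \lceil c_0/\Delta\rceil + 1$.

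The main obstacle is the per-iteration estimate in part (i): without the no-zero-entry hypothesis on $P_1^k$, columns with vanishing $P_{1j}^k$ carry no leading-order signal and the perturbation term may locally compete with the rank-one direction, so the argument needs per-column estimates of $r_j$ rather than any aggregate bound. The tightness of this balance is what dictates the two-branch definition of $\widehat\rho$. The secondary difficulty in (ii) is the stability argument ensuring that the gap does not rebound above $\varepsilon$ after first falling below $\varepsilon/2$, which is what necessitates the extra $+1$ in the iteration count.
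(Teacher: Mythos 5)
Your part (ii) follows the paper's argument almost verbatim: Lemma \ref{complexity} starts the induction at $p-\|V^1\|^2\le c_0$, part (i) forces a per-step decrease of the gap while it lies in $[\varepsilon/2,c_0]$, and the second branch $\rho_l\ge 2\varpi/\varepsilon$ is used exactly as you say to show that once the gap drops below $\varepsilon/2$ a single step can raise it by at most $\rho_l^{-1}\varpi\le\varepsilon/2$, so it never rebounds above $\varepsilon$. (Your alternative appeal to the monotonicity of $\Theta_{\rho_l}$ would not work — the potential decrease does not control $\|V^k\|^2$ directly — but the crude perturbation bound you also mention is precisely the paper's inequality \eqref{temp-WV} applied with $V=V^j$.) For part (i) you take a genuinely different route: the paper never touches the closed-form update $V^{k+1}_j=G^k_j/\|G^k_j\|$; instead it tests the optimality inequality of the subproblem \eqref{Vk-subprob} against the rank-one feasible point $\widehat{V}=q_1\widehat{u}_1^{\mathbb{T}}$ with $\widehat{u}_{1j}=u_{1j}/|u_{1j}|$, which yields $\rho^{-1}\varpi+2\|V^k\|\,\|V^{k+1}\|\ge-\langle\Gamma^k,\widehat{V}\rangle=2\|V^k\|\sum_j|u_{1j}|$ in one line and avoids the per-column division by $\|G^k_j\|$ that your approach has to control.

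The genuine gap is in part (i): you never identify why the dominant term produces a \emph{strict} gain of the required size $(1-\sqrt{1-0.5p^{-1}\varepsilon})\sqrt{1-c_0}$. Your per-column analysis correctly reduces the dominant contribution to $\sum_j|u_{1j}|$ (your coefficient $\sum_j(P^k_{1j})^2/r_j$ with $r_j\approx 2\rho_l\|V^k\||P^k_{1j}|$), and "extracting $\sqrt{1-c_0}$" only uses $\|V^k\|\,|u_{1j}|\ge\sqrt{1-c_0}$ from \eqref{sigmaVu}. But that alone gives only $\sum_j|u_{1j}|\ge\|V^k\|$, i.e., no increment at all. The increment comes from the pigeonhole step: since $\|u_1\|=1$ there is an index $\widehat{j}$ with $u_{1\widehat{j}}^2\le 1/p$, and since the hypothesis $p-\|V^k\|^2\ge\varepsilon/2$ gives $\|V^k\|/\sqrt{p}\le\sqrt{1-0.5p^{-1}\varepsilon}<1$, the single term $u_{1\widehat{j}}^2\big(\tfrac{1}{\|V^k\||u_{1\widehat{j}}|}-1\big)$ in the excess $\sum_j\big(\tfrac{u_{1j}^2}{\|V^k\||u_{1j}|}-u_{1j}^2\big)$ is bounded below by $(1-\|V^k\|/\sqrt{p})\sqrt{1-c_0}/\|V^k\|^2$, which is where the factor $(1-\sqrt{1-0.5p^{-1}\varepsilon})$ — and hence the entire content of the claim — originates. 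Your sketch uses the hypothesis $p-\|V^k\|^2\ge\varepsilon/2$ nowhere, so as written it cannot produce an $\varepsilon$-dependent increment. Relatedly, your closing remark misattributes the need for per-column control to the no-zero-entry hypothesis on $P_1$: in part (i) the non-degeneracy $|u_{1j}|\ge\sqrt{1-c_0}/\|V^k\|>0$ is automatic from $p-\|V^k\|^2\le c_0<1$; the no-zero-entry hypothesis is only needed in part (ii), via Lemma \ref{complexity}, to get the induction off the ground at $k=1$.
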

 \begin{proof}
 {\bf(i)} For each integer $k\ge 1$, from the definition of $V^{k+1}$,
  for any $V\in\mathcal{S}$,
  \begin{align*}
   \rho\langle \Gamma^{k},V^{k+1}-V\rangle
   &\le \langle\nabla\!\widetilde{f}(U^k),V\!-\!V^{k+1}\rangle
   +\frac{L_k}{2}\|V\!-\!U^{k}\|_F^2\!-\!\frac{L_k}{2}\|V^{k+1}\!-\!U^{k}\|_F^2\nonumber\\
   &\le\langle\nabla\!\widetilde{f}(U^k),V\!-\!V^{k+1}\rangle
   +\frac{L_k}{2}\big[\|V\|_F^2+2\|V^{k+1}\!-\!V\|_F\|U^k\|_F\big]\nonumber\\
   &\le \langle\nabla\!\widetilde{f}(U^k)\!-\!\nabla\!\widetilde{f}(E/{\sqrt{m}})
   \!+\!\nabla\!\widetilde{f}(E/{\sqrt{m}}),V\!-\!V^{k+1}\rangle +6.5L_0p\nonumber\\
   &\le 2\sqrt{p}(4L_{\widetilde{f}}\sqrt{p}\!+\!\|\nabla\!\widetilde{f}(E/{\sqrt{m}})\|_F)+6.5L_0p\le\varpi,
  \end{align*}
  where the third inequality is by $L_k\le L_0$ for all $k$
  and $\|V\|_F^2=p$ for $V\in\mathcal{S}$. Then
  \begin{equation}\label{temp-WV}
   -\!\langle \Gamma^{k},V\rangle
   \le\rho^{-1}\varpi+\|\Gamma^k\|_*\|V^{k+1}\|
   =\rho^{-1}\varpi+2\|V^k\|\|V^{k+1}\|
  \end{equation}
  where the equality is by the choice of $\Gamma^k$ in Remark \ref{remark-Alg2} (ii).
  Let $V^k$ have the SVD given by
  $Q[{\rm Diag}((\sigma_1(V^k),\ldots,\sigma_m(V^k))^{\mathbb{T}})\ \ 0]U^{\mathbb{T}}$
  with $\sigma_1(V^k)\ge\cdots\ge\sigma_m(V^k)$. Write
  $Q=[q_1\,\cdots\,q_m]\in\mathbb{O}^m$ and $U=[u_1\,\cdots\,u_p]\in\mathbb{O}^p$.
  Then, for every $j\in\{1,\ldots,p\}$,
  \begin{equation}\label{sigmaVu}
   [\sigma_1(V^k)]^2u_{1j}^2=1-{\textstyle\sum_{i=2}^m}[\sigma_i(V^k)]^2u_{ij}^2\ge 1-{\textstyle\sum_{i=2}^m}[\sigma_i(V^k)]^2\ge 1-c_0>0
  \end{equation}
  where the next to last inequality is using $\|V^k\|_F^2-\|V^k\|^2\le c_0$.
  Take $\widehat{V}=q_1\widehat{u}_1^\mathbb{T}$ with
  $\widehat{u}_{1j}=\frac{u_{1j}}{|u_{1j}|}$ for each $j$. Clearly, $\widehat{V}\in\mathcal{S}$.
  From \eqref{temp-WV} with $V=\widehat{V}$ and $\Gamma^k=-2V^ku_1u_1^{\mathbb{T}}$,
  \begin{align}\label{temp-snromV}
   \rho^{-1}\varpi+2\|V^k\|\|V^{k+1}\|&\ge-\langle \Gamma^k,\widehat{V}\rangle
   =2\|V^k\|(u_1^{\mathbb{T}}\widehat{u}_1)
   =2\|V^k\|^2{\textstyle\sum_{j=1}^p}\frac{|u_{1j}|}{\|V^k\|}\nonumber\\
   &=2\|V^k\|^2+2\|V^k\|^2\sum_{j=1}^p\Big(\frac{u_{1j}^2}{\|V^k\||u_{1j}|}-u_{1j}^2\Big)
  \end{align}
  where the third equality is by $\sum_{j=1}^pu_{1j}^2=1$. Since $\|u_1\|=1$,
  there is an index $\widehat{j}$ such that $u_{1\widehat{j}}^2\le\frac{1}{p}$.
  Note that $\|V^k\||u_{1j}|\le 1$ for each $j$ by the first equality of \eqref{sigmaVu}.
  So,
 \begin{align*}
  \rho^{-1}\varpi+2\|V^k\|\|V^{k+1}\|&
   \ge 2\|V^k\|^2+2\|V^k\|^2\big(\frac{1}{\|V^k\||u_{1\widehat{j}}|}-1\big)u_{1\widehat{j}}^2\\
   &\ge 2\|V^k\|^2+2\big(1-{\|V^k\|}/{\sqrt{p}}\big)\sqrt{1-c_0}\\
   &\ge 2\|V^k\|^2+2(1-\!\sqrt{1-0.5p^{-1}\varepsilon})\sqrt{1-c_0},
  \end{align*}
  where the second inequality is by \eqref{sigmaVu},
  and the last is since $p-\|V^k\|^2\ge\frac{\varepsilon}{2}$.
  Along with $\rho\ge\frac{\varpi}{(1-\sqrt{1\!-0.5p^{-1}\varepsilon})\sqrt{1\!-c_0}}$,
  we get $\|V^k\|\|V^{k+1}\|\ge \|V^k\|^2+\frac{1}{2}(1-\!\sqrt{1\!-0.5p^{-1}\varepsilon})\sqrt{1-c_0}$.
  Together with $\|V^k\|\|V^{k+1}\|\le \frac{1}{2}\|V^k\|^2+\frac{1}{2}\|V^{k+1}\|^2$,
  the desired result follows.

  \noindent
  {\bf(ii)} Let $\eta:=\big(1\!-\!\sqrt{1-0.5p^{-1}\varepsilon}\big)\sqrt{1-c_0}$
  and $\widehat{k}:=\lceil\frac{c_0}{(1-\sqrt{1-0.5p^{-1}\varepsilon})\sqrt{1-c_0}}\rceil\!+\!1$.
  We first argue that there exists $1\le\overline{k}\le\widehat{k}$ such that
  $p-\|V^{\overline{k}}\|^2\le\varepsilon$. If not, for all $1\le k\le\widehat{k}$,
  we have $p-\|V^k\|^2>\varepsilon$. By Lemma \ref{complexity},
  $p-\|V^1\|^2\le c_0$. Thus, from part (i), it follows that
  $p-\|V^k\|^2\le c_0$ for all $1\le k\le\widehat{k}$. Using part (i) again,
  $\|V^{k+1}\|^2\ge\|V^k\|^2+\eta$ for all $1\le k\le\widehat{k}$. From this,
  $\|V^{\widehat{k}}\|^2\ge \|V^1\|^2\!+(\widehat{k}\!-\!1)\eta\ge p-c_0+(\widehat{k}\!-\!1)\eta$,
  which is impossible due to $\|V^{\widehat{k}}\|^2<p-\varepsilon$, so the stated $\overline{k}$ exists.
  Next we argue by induction that $p-\|V^k\|^2\le\varepsilon$ for all $k\ge\overline{k}$.
  Suppose that $p-\|V^j\|^2\le\varepsilon$ for $j\ge\overline{k}$.
  We show that $p-\|V^{j+1}\|^2\le\varepsilon$ by two cases.
  If $p-\|V^j\|^2<\varepsilon/2$, by invoking \eqref{temp-WV} with $V=V^j$, we have
  $2\|V^j\|\|V^{j+1}\|\ge2\|V^j\|^2-\rho^{-1}\varpi$,
  which implies that $\|V^{j+1}\|^2\ge \|V^j\|^2-\rho^{-1}\varpi$,
  so $p-\|V^{j+1}\|^2\le p-\|V^j\|^2+\rho^{-1}\varpi\le\varepsilon/2+\rho^{-1}\varpi\le\varepsilon$.
  If $p-\|V^j\|^2\ge\varepsilon/2$, since $p-\|V^j\|^2\le\varepsilon<c_0$,
  from part (i) we have $p-\|V^{j+1}\|^2\le p-\|V^j\|^2-\eta\le\varepsilon$.
  The proof is completed. \qed
  \end{proof}

 The following theorem states that the rank-one projection of the normal output
 of Algorithm \ref{Alg-factor} is also an approximately feasible solution
 of the problem \eqref{UBPP}, and provides a quantitative bound estimation
 for its objective value to the optimal value of the problem \eqref{UBPP}.
 \begin{theorem}\label{obj-bound2}
  Let $\upsilon^*$ be the optimal value of \eqref{UBPP} and let $V^{l_{\!f}}$
  be a normal output of Algorithm \ref{Alg-factor}. For each $l\ge 0$, let $\{(V^{l,k},\Gamma^{l,k})\}$
  be generated by Algorithm \ref{Alg2} with $V^{l,0}=V^{l}$
  and $\beta_k\equiv 0$. If there exists $l^*\in\{0,1,\ldots,l_{\!f}\}$ such that
  $f((V^{l^*})^{\mathbb{T}}V^{l^*})\le \upsilon^*$, then with $r^*\!={\rm rank}(V^{l^*})$ it holds that
  \begin{subequations}
   \begin{align*}
   \!f(x^{l_{\!f}}(x^{l_{\!f}})^{\mathbb{T}})-\upsilon^*
    \le \rho_{l_{\!f}}\|V^{l_{\!f}}\|^2-\rho_{l^*}p/r^*+\!{\textstyle\sum_{j=l^*}^{l_{\!f}-1}}(\rho_{j}\!-\!\rho_{j+1})\|V^{j+1}\|^2
        +\alpha_{\!f}\epsilon;\\
   \!\|x^{l_{\!f}}\circ x^{l_{\!f}}-e\|\le\epsilon\ \ {\rm with}\ \ x^{l_{\!f}}=\|V^{l_{\!f}}\|P_1
   \ \ {\rm for}\ P\in\!\mathbb{O}((V^{l_{\!f}})^{\mathbb{T}}V^{l_{\!f}}).\qquad
   \end{align*}
  \end{subequations}
 \end{theorem}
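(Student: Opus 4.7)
The plan is to prove the two claims separately: the feasibility bound will follow from the stopping criterion combined with the spectral structure of $(V^{l_{\!f}})^{\mathbb{T}}V^{l_{\!f}}$, while the objective bound will come from the monotone descent of the inner MM scheme across outer iterations, followed by Abel summation and a Lipschitz estimate.

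For the feasibility claim, I first write the spectral decomposition $X^{l_{\!f}}\!:=\!(V^{l_{\!f}})^{\mathbb{T}}V^{l_{\!f}}=\sum_{i=1}^{p}\lambda_iP_iP_i^{\mathbb{T}}$ with $\lambda_1\ge\cdots\ge\lambda_p\ge 0$ and $P\in\mathbb{O}(X^{l_{\!f}})$. Since $V^{l_{\!f}}\in\mathcal{S}$, the matrix $X^{l_{\!f}}$ lies in $\Omega$, so $\sum_{i=1}^p\lambda_i=\|V^{l_{\!f}}\|_F^2=p$, and the normal exit of Algorithm~\ref{Alg-factor} delivers $\sum_{i\ge 2}\lambda_i=p-\|V^{l_{\!f}}\|^2\le\epsilon$. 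With $x^{l_{\!f}}=\|V^{l_{\!f}}\|P_1$ one has $(x^{l_{\!f}})_j^2=\lambda_1P_{1j}^2$, while $1=(\mathrm{diag}(X^{l_{\!f}}))_j=\sum_i\lambda_iP_{ij}^2$; subtracting gives $1-(x^{l_{\!f}})_j^2=\sum_{i\ge 2}\lambda_iP_{ij}^2$. Using $P_{ij}^2\le 1$ and $\sum_jP_{ij}^2=1$, squaring and summing in $j$ yields $\|x^{l_{\!f}}\circ x^{l_{\!f}}-e\|^2\le(\sum_{i\ge 2}\lambda_i)^2\le\epsilon^2$, the desired estimate.

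For the objective bound, the crucial observation is that with $\beta_k\equiv 0$ Algorithm~\ref{Alg2} reduces to a pure MM iteration, so the majorization property of $\widetilde{F}(\cdot,V^k)$ yields the monotone descent $\widetilde{f}(V^{l,k+1})+\rho_l(\|V^{l,k+1}\|_F^2-\|V^{l,k+1}\|^2)\le\widetilde{f}(V^{l,k})+\rho_l(\|V^{l,k}\|_F^2-\|V^{l,k}\|^2)$. Passing to the limit of the inner sequence (guaranteed by Theorem~\ref{theorem-Alg2}) and using $V^l,V^{l+1}\in\mathcal{S}$ produces the telescopable inequality $\widetilde{f}(V^{l+1})-\widetilde{f}(V^l)\le\rho_l(\|V^{l+1}\|^2-\|V^l\|^2)$. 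Summing from $j=l^*$ to $j=l_{\!f}-1$ and applying Abel summation to the right-hand side rearranges it exactly into $\rho_{l_{\!f}}\|V^{l_{\!f}}\|^2-\rho_{l^*}\|V^{l^*}\|^2+\sum_{j=l^*}^{l_{\!f}-1}(\rho_j-\rho_{j+1})\|V^{j+1}\|^2$. The hypothesis $\widetilde{f}(V^{l^*})\le\upsilon^*$ then upper bounds $\widetilde{f}(V^{l_{\!f}})$, and the elementary inequality $\|V^{l^*}\|^2\ge\|V^{l^*}\|_F^2/r^*=p/r^*$, which holds because $V^{l^*}$ has rank $r^*$ and $\|V^{l^*}\|_F^2=p$, replaces $-\rho_{l^*}\|V^{l^*}\|^2$ by $-\rho_{l^*}p/r^*$ in the correct (upper) direction.

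It remains to pass from $\widetilde{f}(V^{l_{\!f}})=f(X^{l_{\!f}})$ to $f(x^{l_{\!f}}(x^{l_{\!f}})^{\mathbb{T}})$. Setting $\widehat{X}^{l_{\!f}}:=x^{l_{\!f}}(x^{l_{\!f}})^{\mathbb{T}}=\lambda_1P_1P_1^{\mathbb{T}}$, the equality $\|\widehat{X}^{l_{\!f}}-X^{l_{\!f}}\|_F^2=\sum_{i\ge 2}\lambda_i^2\le(\sum_{i\ge 2}\lambda_i)^2\le\epsilon^2$ combined with the Lipschitz constant $\alpha_{\!f}$ of $f$ gives $f(\widehat{X}^{l_{\!f}})-f(X^{l_{\!f}})\le\alpha_{\!f}\epsilon$, and chaining this with the previous estimate delivers the stated bound. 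The main subtlety I foresee is precisely this last Lipschitz step: the rank-one projection $\widehat{X}^{l_{\!f}}$ need not lie in $\Omega$, so I must invoke the gradient Lipschitzness on the enlargement $\mathbb{B}_{\Omega}$ introduced in Section~\ref{sec1}, noting that once $\epsilon$ is small enough the matrix $\widehat{X}^{l_{\!f}}$ sits inside this set, and identify $\alpha_{\!f}$ with the resulting Lipschitz constant on the enclosing neighborhood of $\Omega$.
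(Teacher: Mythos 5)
Your proposal is correct and follows essentially the same route as the paper: the per-iteration descent of $\widetilde{f}(V^{l,k})-\rho_l\|V^{l,k}\|^2$ (your MM-majorization derivation is equivalent to the paper's explicit use of \eqref{key-ineq0}, the descent lemma, and $\langle\Gamma^{l,k},V^{l,k+1}\rangle\ge-\|V^{l,k}\|^2-\|V^{l,k+1}\|^2$), the telescoping with the $(\rho_j-\rho_{j+1})\|V^{j+1}\|^2$ correction, the bound $\|V^{l^*}\|^2\ge p/r^*$, the Lipschitz estimate $f(X^{l_{\!f}})\ge f(x^{l_{\!f}}(x^{l_{\!f}})^{\mathbb{T}})-\alpha_{\!f}\epsilon$, and the identity $\|x^{l_{\!f}}\circ x^{l_{\!f}}-e\|=\|\sum_{i\ge 2}\lambda_iP_i\circ P_i\|\le\epsilon$. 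Your closing remark about needing $f$ to be Lipschitz on a neighborhood of $\Omega$ (rather than on $\Omega$ alone) because $\lambda_1P_1P_1^{\mathbb{T}}\notin\Omega$ is a fair point that the paper glosses over, and your resolution via $\mathbb{B}_{\Omega}$ is consistent with the paper's standing assumptions.
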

  \begin{proof}
  Fix any $l\in\{0,1,\ldots,l_{\!f}\}$. For each $k\ge 0$,
  from $\beta_k\equiv 0$ and \eqref{key-ineq0},
  \begin{equation*}
   \langle\nabla\!\widetilde{f}(V^{l,k})+\rho_{l}\Gamma^{l,k},V^{{l,k}+1}\!-\!V^{l,k}\rangle
   +\rho_{l}(\|V^{{l,k}+1}\|_F^2-\|V^{l,k}\|_F^2)+\frac{L_{{l,k}}}{2}\|V^{{l,k}+1}\!-\!V^{l,k}\|_F^2\le 0.
 \end{equation*}
  Notice that $\widetilde{f}(V^{l,k+1})\le \widetilde{f}(V^{l,k})
  +\langle\nabla\!\widetilde{f}(V^{l,k}),V^{l,k+1}\!-\!V^{l,k}\rangle
  +\frac{L_{\widetilde{f}}}{2}\|V^{l,k+1}\!-\!V^{l,k}\|_F^2$ by using \eqref{wfrho}
  with $V=V^{l,k+1}$ and $Z=V^{l,k}$. From the last inequality and $L_{l,k}\ge L_{\widetilde{f}}$,
  \[
    \widetilde{f}(V^{l,k+1})-\widetilde{f}(V^{l,k})+\rho_l\langle\Gamma^{l,k},V^{{l,k}+1}\!-\!V^{l,k}\rangle \le 0,
  \]
  where $\|V^{{l,k}+1}\|_F^2\!=\!\|V^{{l,k}}\|_F^2\!=\!p$ is also used.
  Notice that $\langle\Gamma^{l,k},V^{l,k}\rangle\!=\!-2\|V^{l,k}\|^2$ and
  \(
  \langle\Gamma^{l,k},V^{{l,k}+1}\rangle\\
  \ge -2\|V^{l,k}\|\|V^{{l,k}+1}\|\ge-\|V^{l,k}\|^2-\|V^{{l,k}+1}\|^2.
  \)
   Then, it holds that
  \(
    \widetilde{f}(V^{l,k+1})\!-\!\rho_l\|V^{{l,k}+1}\|^2
   \le \widetilde{f}(V^{l,k})-\rho_l\|V^{l,k}\|^2.
  \)
  By using this recursion formula,
  \[
    \widetilde{f}(V^{l,k+1})\!-\!\rho_l\|V^{{l,k}+1}\|^2
    \le\cdots\le \widetilde{f}(V^{l,0})\!-\!\rho_l\|V^{0}\|^2=\widetilde{f}(V^{l})-\rho_{l}\|V^l\|^2.
  \]
  By Theorem \ref{theorem-Alg2}, the sequence $\{V^{l,k}\}$ is convergent as $k\to\infty$.
  Let $V^{l,*}$ denote its limit. Then $V^{l+1}=V^{l,*}$.
  From the last inequality, for each $l\in\{0,1,\ldots,l_{\!f}\}$,
  \begin{equation}\label{lineq1}
    \widetilde{f}(V^{l+1})-\rho_l\|V^{l+1}\|^2\le \widetilde{f}(V^{l})-\rho_{l}\|V^l\|^2.
  \end{equation}
  Notice that $X^{l_{\!f}}=\sum_{i=1}^p\lambda_i(X^{l_{\!f}})P_iP_i^{\mathbb{T}}\in\Omega$.
  From the Lipschitz continuity of $f$ relative to $\Omega$ with modulus $\alpha_{\!f}$,
  it follows that
  \begin{align}\label{xlf-ineq}
  f(X^{l_{\!f}})&=f\big({\textstyle\sum_{i=1}^p}\lambda_i(X^{l_{\!f}})P_iP_i^{\mathbb{T}}\big)
  =f\big(\lambda_1(X^{l_{\!f}})P_1P_1^{\mathbb{T}}+{\textstyle\sum_{i=2}^p}\lambda_i(X^{l_{\!f}})P_iP_i^{\mathbb{T}}\big)
    \nonumber\\
  &\ge f(x^{l_{\!f}}(x^{l_{\!f}})^{\mathbb{T}})-
      \alpha_{\!f}\|{\textstyle\sum_{i=2}^p}\lambda_i(X^{l_{\!f}})P_iP_i^{\mathbb{T}}\|_F
   \ge f(x^{l_{\!f}}(x^{l_{\!f}})^{\mathbb{T}})-\alpha_{\!f}\epsilon.
  \end{align}
  On the other hand, adding $(\rho_{l}-\rho_{l+1})\|V^{l+1}\|^2$ to
  the both sides of \eqref{lineq1} yields that
  \[
    \widetilde{f}(V^{l+1})-\rho_{l+1}\|V^{l+1}\|^2
    \le \widetilde{f}(V^{l})-\rho_{l}\|V^{l}\|^2+(\rho_{l}-\rho_{l+1})\|V^{l+1}\|^2
    \quad\forall l\in\{0,\ldots,l_{\!f}\}.
  \]
  Then, we obtain
  \(
    \widetilde{f}(V^{l_f})-\rho_{l_{\!f}}\|V^{l_f}\|^2
    \le \widetilde{f}(V^{l^*})-\rho_{l^*}\|V^{l^*}\|^2
    +{\textstyle\sum_{j=l^*}^{l_{\!f}-1}}(\rho_{j}-\rho_{j+1})\|V^{j+1}\|^2.
  \)
  Recall that $\widetilde{f}(V^{l_f})=f(X^{l_f})$. Together with \eqref{xlf-ineq} and $\|V^{l^*}\|\ge\sqrt{p/r^*}$,
  the first inequality follows. Since ${\rm diag}(X^{l_{\!f}})=e$, we have
  $\|x^{l_{\!f}}\circ x^{l_{\!f}}-e\|=\|\sum_{i=2}^p\lambda_i(X^{l_{\!f}})P_i\circ P_i\|
  \le\epsilon$.  \qed
  \end{proof}

  If $f$ is convex, by computing $X^0\in{\displaystyle\mathop{\arg\min}_{X\in\Omega}}f(X)$ and letting
  $\Lambda^0\!=\!{\rm Diag}(\lambda_1(X^0),\ldots,\lambda_m(X^0))$, then $l^*=0$ and
  $V^{l^*}\!=\!\sqrt{\Lambda^0}P_I^{\mathbb{T}}$ with $P\!\in\!\mathbb{O}(X^0)$
  satisfies the assumption of Theorem \ref{obj-bound2}. Hence, for a convex $f$,
  the normal output of Algorithm \ref{Alg-factor} yields a desirable approximate feasible solution.
 \section{Numerical experiments}\label{sec5}

  This section tests the performance of Algorithm \ref{Alg-factor} armed with
  Algorithm \ref{Alg2} (\textbf{dcFAC} for short). To confirm its efficiency,
  we compare its performance with that of SDP relaxation method armed with
  random rounding technique in \cite{Goemans95} (\textbf{SDPRR} for short);
  see Section \ref{sec5.2} for its description. We also compare the performance
  of dcFAC with that of Algorithm \ref{Alg} below, a DC relaxation approach based on
  model \eqref{epenalty}, for which every penalty subproblem \eqref{epenalty-subprob}
  is solved by Algorithm \ref{Alg-X}, an MM method with extrapolation. When $f$ is nonlinear
  (say, the instances in Section \ref{sec5.6}), we use Algorithm \ref{Alg-X} to solve
  the subproblem \eqref{epenalty-subprob}, where $L_{\!f}$ is the Lipschitz constant
  of $\nabla\!f$ in $\mathbb{B}_{\Omega}$. For the convergence analysis of Algorithm \ref{Alg-X},
  see Appendix B. Considering that QSDPNAL \cite{LiSunToh18} is not well adapted to quadratic
  SDP subproblems of Algorithm \ref{Alg-X}, we use the dual semismooth Newton method in \cite{QiSun06}
  to solve them. In the sequel, Algorithm \ref{Alg} armed with  Algorithm \ref{Alg-X}
  is abbreviated to \textbf{dcSNCG}. Our code can be downloaded from \url{https://github.com/SCUT-OptGroup/rankone_UPPs}.
  When $f$ is a linear function (say, the instances in Section \ref{sec5.2}-\ref{sec5.5}),
  the subproblem \eqref{epenalty-subprob} is solved by Algorithm \ref{Alg-X}
  with $L_k\equiv L_{\!f}=0$ and $\beta_k\equiv 0$. Although there is no convergence certificate
  for such Algorithm \ref{Alg-X}, we adopt it just for numerical comparisons.
  In this case, the linear SDP subproblems in Algorithm \ref{Alg-X} are solved with
  the software SDPT3 \cite{Toh99}, and Algorithm \ref{Alg} equipped with
  such Algorithm \ref{Alg-X} and SDPT3 is abbreviated to \textbf{dcSDPT3}.
 \begin{algorithm}[H]
 \renewcommand{\thealgorithm}{2}
  \caption{(DC relaxation approach based on \eqref{epenalty})}
  \label{Alg}
  \begin{algorithmic}
  \normalsize
  \STATE{Choose $\epsilon\in(0,1),l_{\rm max}\in\mathbb{N},\rho_{\rm max}>0,\sigma>1,\rho_0>0$
  and $X^0\in\Omega$.}
  \FOR{$l=0,1,2,\ldots,l_{\rm max}$}
  \STATE{Starting from $X^l$, seek a stationary point $X^{l+1}$ of the nonconvex problem
             \begin{equation}\label{epenalty-subprob}
             \min_{X\in\Omega}\Big\{f(X)+\rho_{l}(\langle I,X\rangle-\|X\|)\Big\}.
            \end{equation}}
  \STATE{If $\langle I,X^{l+1}\rangle-\|X^{l+1}\|\le\epsilon$, then stop. Otherwise,
         let $\rho_{l+1}\leftarrow\min\{\sigma\rho_l,\rho_{\rm max}\}$. }
  \ENDFOR
  \end{algorithmic}
  \end{algorithm}
  \vspace{-0.3cm}
 \begin{algorithm}
 \renewcommand{\thealgorithm}{B}
 \caption{(An MM method with extrapolation for \eqref{epenalty-subprob})}
 \label{Alg-X}
 \begin{algorithmic}
 \normalsize
 \STATE{Fix $l\ge 0$. Choose $0\le\!\beta_0\!\le\!\overline{\beta}<1$ and $L_0\ge L_{\!f}$.
  Set $\rho=\rho_l$ and $X^{-1}=X^0=X^l$.}
 \FOR{$k=0,1,2,\ldots$}
 \STATE{Choose an element $W^{k}\in\partial\psi(X^k)$}.
 \STATE{Let $Y^k=X^k+\beta_k(X^k\!-\!X^{k-1})$. Compute an optimal solution of the convex SDP:
             \begin{equation}\label{Xk-subprob}
             \!X^{k+1}=\mathop{\arg\min}_{X\in\Omega}\Big\{\langle\nabla\!f(Y^k)\!+\!\rho(I\!+\!W^k),X\rangle
              +({L_k}/{2})\|X\!-\!Y^k\|_F^2\Big\}.
              \end{equation}}
 \STATE{Update $\beta_{k}$ by $\beta_{k+1}\in[0,\overline{\beta}]$ and $L_k$ by $L_{k+1}\in[L_{\!f},L_0]$}.
 \ENDFOR
 \end{algorithmic}
 \end{algorithm}

  All tests are performed in MATLAB on a workstation running on 64-bit Windows
  Operating System with an Intel Xeon(R) W-2245 CPU 3.90GHz and 128 GB RAM.
  We measure the performance of a solver by the relative gap and infeasibility
  of its outputs and the CPU time (in seconds) taken. Let $x^*\!=\!\|V^*\|Q_1^*$
  or $\sqrt{\|X^*\|}P_1^*$ with $Q^*\!\in\!\mathbb{O}((V^*)^{\mathbb{T}}V^*)$ and
  $P^*\!\in\!\mathbb{O}(X^*)$, where $V^*$ is the output of dcFAC and $X^*$ is
  an output for one of other three solvers. The relative gap and infeasibility
  of $x^*$ are defined by $\textbf{gap}\!:=\frac{\rm Obj-Bval}{\rm Bval}$ and
  $\textbf{infeas}\!:=\big\|(x^*\circ x^*)^{1/2}-e\big\|_{\infty}$,
  where ${\rm Bval}$ means the known best value of \eqref{UBPP},
  and Obj denotes the objective value of \eqref{UBPP} at $x^*$. For the subsequent tests,
  we use the default setting for the softwares SDPT3 and SDPNAL+.
 \subsection{Implementation of dcFAC and dcSNCG}\label{sec5.1}
  We first focus on the choice of parameters in Algorithm \ref{Alg-factor}
  and \ref{Alg}. Preliminary tests indicate that smaller $\rho_0$ and $\sigma$
  often lead to better relative gaps for Algorithm \ref{Alg-factor} and \ref{Alg}.
  Since it is time consuming to search the best $\rho$, an appropriately small $\rho_0$
  becomes a reasonable choice. We choose $\rho_0\!=\!0.001,\sigma\!=\!1.005$
  for Algorithm \ref{Alg-factor}, but $\rho_0=0.1,\sigma=1.05$
  for Algorithm \ref{Alg} armed with SNCG since it requires much more time
  for those examples with $n\ge 2000$. For Algorithm \ref{Alg} armed with SDPT3,
  we use $\rho_0\!=\!0.001,\sigma\!=\!1.005$ for solving the examples with $n\!<\!500$,
  but $\rho_0\!=\!0.1,\sigma\!=\!1.05$ for solving the examples with $n\ge 500$.
  We set $\epsilon=10^{-8},\rho_{\rm max}=10^6$ and $l_{\rm max}=10^4$
  for Algorithm \ref{Alg-factor} and \ref{Alg}. In addition, we take
  $m=\!\max(\min(50,{\rm round}(p/2)),2)$ by considering that
  a smaller $m$ makes \eqref{BPP-factor} vulnerable to much
  worse critical points, but a larger $m$ requires more computation cost.
  The starting point $V^0$ of Algorithm \ref{Alg-factor} is chosen
  to be $\widetilde{V}{\rm Diag}(\|\widetilde{V}_{1}\|^{-1},\ldots,\|\widetilde{V}_{p}\|^{-1})$
  where $\widetilde{V}\!\in\mathbb{R}^{m\times p}$ is generated in MATLAB
  command $\textrm{randn}(m,p)$ with a fixed seed for all test problems;
  and the starting point $X^0$ of Algorithm \ref{Alg} is chosen to be $(V^0)^{\mathbb{T}}V^0$.

  The parameter $\beta_k$ in Algorithm \ref{Alg2} and \ref{Alg-X} is given
  by Nesterov's accelerated strategy \cite{Nesterov83}.
  Although their convergence analysis requires a restriction on $\beta_k$,
  numerical tests indicate that they still converge without it. Hence,
  we do not impose any restriction on such $\beta_k$ during their implementation,
  and leave this gap for a future topic. For the parameter $L_k$ of Algorithm \ref{Alg2},
  when $L_{\!\widetilde{f}}$ is known (say, the instances in Section \ref{sec5.2}-\ref{sec5.5}),
  we set it to be a fixed constant, otherwise search a desired $L_k$ by
  the descent lemma. Specifically, we set $L_k\equiv 2.001\|C\|$ for the instances
  in Section \ref{sec5.2}-\ref{sec5.5} since $L_{\!\widetilde{f}}=2\|C\|$,
  and search a desired $L_k$ with $L_0\equiv 6p(\|C_1\|\|C_2\|_F\!+\!\|C_1\|_F\|C_2\|)$
  for the instances in Section \ref{sec5.6}. For the parameter $L_k$ of Algorithm \ref{Alg-X},
  we take $\|C_1\|\|C_2\|_F\!+\!\|C_1\|_F\|C_2\|$ for the problems in Section \ref{sec5.6}
  since it is exactly the Lipschitz constant $L_{\!f}$.

  During the implementation of Algorithm \ref{Alg2}, we seek an approximate
  stationary point of subproblem \eqref{epenalty-factor}. According to
  the optimality conditions of the subproblem \eqref{epenalty-factor}, we terminate Algorithm \ref{Alg2}
  whenever $k\le k_{\rm max}$ or the following condition is satisfied
  \[
   \|\nabla\widetilde{f}(V^{k+1})-\nabla\widetilde{f}(U^k)-L_k(V^{k+1}-U^k)+\rho(\Gamma^{k+1}-\Gamma^k)\|_F
    \le\tau_k\max(1,\eta),
 \]
 where $\tau_{k+1}=\max(10^{-5},0.995\tau_k)$ with $\tau_0=0.005$,
 and $\eta>0$ is a constant related to test instances. Among others,
 $\eta=\|C\|_F$ for the examples in Section \ref{sec5.2}-\ref{sec5.4},
 and $\eta=\max_{1\le i\le q}\|C_i\|_F$ for those in Section \ref{sec5.5}.
 A similar stopping condition, except $\tau_{k+1}=\max(10^{-5},0.9\tau_k)$,
 is also used for Algorithm \ref{Alg-X} and SDPT3 to solve \eqref{epenalty-subprob}.
 Consider that those penalty problems with smaller $\rho$ are actually used to
 seek an appropriate $\rho$. When $\|V^{l}\|_F^2-\|V^{l}\|^2$ has a larger value
 (corresponding to a smaller $\rho$), we can calculate a very rough approximate
 stationary point of \eqref{epenalty-factor}. Inspired by this,
 during the testing, we take $k_{\rm max}=3$ when $\|V^{l}\|_F^2-\|V^{l}\|^2>1$
 for Algorithm \ref{Alg2} and \ref{Alg-X}, but respectively set $k_{\rm max}=3000$
 and $k_{\rm max}=1000$ for them when $\|V^{l}\|_F^2-\|V^{l}\|^2\le 1$.
 \subsection{Comparisons with SDPRR and dcSDPT3 for Biq instances}\label{sec5.2}

  In this part, we compare the performance of dcFAC with that of SDPRR
  and dcSDPT3 for the problem $\max_{z\in\{0,1\}^n}z^{\mathbb{T}}Az$,
  which can be reformulated as \eqref{DC-BPP} with $p=n+1$ and $f(X)=\langle C,X\rangle$
  for $C=-\frac{1}{4}\left(\begin{matrix}
          0& e^{\mathbb{T}}A\\
          Ae & A
       \end{matrix}\right)$.
  The matrix $A$ is from the Biq Mac Library.
  The SDPRR first uses the software SDPNAL+ \cite{Yang15,Sun20} to solve
  the SDP yielded by removing the DC constraint in \eqref{rank-BQP1} but
  adding the valid inequalities $X_{1j}+X_{1k}+X_{jk}\ge-1,X_{1j}-X_{1k}-X_{jk}\ge-1,
  -X_{1j}+X_{1k}-X_{jk}\ge-1$ and $-X_{1j}-X_{1k}+X_{jk}\ge-1$ for all $1<j<k$,
  and then impose \textbf{50} times random rounding technique \cite{Goemans95}
  on the solution and select the best one from $50$ feasible solutions.
 Table \ref{table1} reports the objective values of the outputs of
 three solvers and the CPU time taken by them, and the optimal values of these instances,
 where the gap value in red means the best for an instance.

 We see that dcFAC and dcSDPT3 have much better performance than SDPRR does
 in terms of the quality of the outputs, and among the 119 instances, the outputs of
 dcFAC and dcSDPT3 respectively have \textbf{103} and \textbf{106} best ones,
 and their relative gaps to the optimal values are at most \textbf{1.824\%}
 except \textbf{gka9b} and \textbf{gka10b}. Since the data matrix from \textbf{gka1b}-\textbf{gka10b}
 has a special structure, i.e., the diagonal entries are from $[-63,0]$ while
 the off-diagonal entries are from $[0,100]$, the outputs of dcSDPT3 and dcFAC
 have a zero objective value for them. The CPU time of dcFAC is far less
 than that of dcSDPT3 and SDPRR, and for those examples with $n=250$,
 dcFAC requires at most $6.0s$ but dcSDPT3 and SDPRR require at least $900s$.
 \setlength{\tabcolsep}{1mm}{
 \begin{center}
  \setlength{\abovecaptionskip}{0pt}%
 \setlength{\belowcaptionskip}{2pt}%
 \captionsetup{font={scriptsize}}
 \renewcommand\arraystretch{1.5}
 \tiny
 \begin{longtable}{|c|c|c|cc|cc|cc||c|c|c|cc|cc|cc|}
  \caption{Numerical results of dcSDPT3, dcFAC and SDPRR for Biq Mac Library instances}\label{table1}\\
  \hline
  & & &\multicolumn{2}{c|}{\bf dcSDPT3}&\multicolumn{2}{c|}{\bf dcFAC}&\multicolumn{2}{c||}{\bf SDPRR}
  & & & &\multicolumn{2}{c|}{\bf dcSDPT3}&\multicolumn{2}{c|}{\bf dcFAC}&\multicolumn{2}{c|}{\bf SDPRR}\\
  \hline
 {\bf Name}&$n$&{\bf Optval}& {\bf Obj}&time& {\bf Obj}&time& {\bf Obj}&time &{\bf Name}&$n$&{\bf Optval}
 & {\bf Obj}&time& {\bf Obj}&time& {\bf Obj}&time \\
 \hline
 \textrm{bqp100-1}&100& 7970& 7848& 333.3& 7848& 4.4& {\bf\color{red}7938}& 408.6
 &\textrm{bqp100-2}&100& 11036& 11032& 330.1& 11032& 4.2& {\bf\color{red}11036}& 214.0\\
 \hline

 \textrm{bqp100-3}&100& 12723& {\bf\color{red}12723}& 316.9& {\bf\color{red}12723}& 4.2& {\bf\color{red}12723}& 2.3
&\textrm{bqp100-4}&100& 10368& {\bf\color{red}10368}& 306.1& {\bf\color{red}10368}& 4.3& {\bf\color{red}10368}& 6.2\\
 \hline

 \textrm{bqp100-5}&100& 9083& 9045& 324.6& 9045& 4.5& {\bf\color{red}9083}& 44.5
&\textrm{bqp100-6}&100& 10210& {\bf\color{red}10202}& 327.5& {\bf\color{red}10202}& 4.0& 10164& 364.6\\
 \hline

 \textrm{bqp100-7}&100& 10125& 10060& 338.2& 10060& 4.3& {\bf\color{red}10121}& 462.2
&\textrm{bqp100-8}&100& 11435& 11415& 318.7& 11415& 4.1& {\bf\color{red}11435}& 8.9\\
 \hline

 \textrm{bqp100-9}&100& 11455& {\bf\color{red}11455}& 317.6& {\bf\color{red}11455}& 4.6& {\bf\color{red}11455}& 3.3
&\textrm{bqp100-10}&100& 12565& 12521& 327.8& 12521& 4.3& {\bf\color{red}12565}& 11.6\\
 \hline

 \textrm{bqp250-1}&250& 45607& {\bf\color{red}45547}& 1066.1&{\bf\color{red}45547} &4.7 &44588 &415.4
&\textrm{bqp250-2}&250& 44810& {\bf\color{red}44810}& 1080.3&44774 &4.9 &44136 &2085.6 \\
 \hline

 \textrm{bqp250-3}&250& 49037& {\bf\color{red}48977}& 1046.6&{\bf\color{red}48977} &5.0 &48935 &1984.0
&\textrm{bqp250-4}&250& 41274& {\bf\color{red}41270}& 1029.6&{\bf\color{red}41270} &4.7 &40589 &2221.2 \\
 \hline

 \textrm{bqp250-5}&250& 47961& {\bf\color{red}47815}& 1071.5&{\bf\color{red}47815} &5.5 &47645 &2246.1
&\textrm{bqp250-6}&250& 41014& {\bf\color{red}40906}& 1027.6&{\bf\color{red}40906} &4.7 &40236 &2127.2 \\
 \hline

 \textrm{bqp250-7}&250& 46757& {\bf\color{red}46687}& 1068.9&{\bf\color{red}46687} &4.7 &46505 &2007.4
&\textrm{bqp250-8}&250& 35726& {\bf\color{red}35556}& 1031.3&{\bf\color{red}35556} &5.2 &33982 &1891.7 \\
 \hline

 \textrm{bqp250-9}&250& 48916& {\bf\color{red}48916}& 1066.6&{\bf\color{red}48916} &4.5 &48002 &1733.1
&\textrm{bqp250-10}&250& 40442& {\bf\color{red}40336}& 1064.2&{\bf\color{red}40336} &4.8 &39722 &2145.8 \\
 \hline

 \textrm{be100.1}&100& 19412& {\bf\color{red}19412}& 319.4& {\bf\color{red}19412}& 3.9& {\bf\color{red}19412}& 479.5
&\textrm{be100.2}&100& 17290& {\bf\color{red}17290}& 329.2& {\bf\color{red}17290}& 4.0& 17256& 63.2\\
 \hline

 \textrm{be100.3}&100& 17565& {\bf\color{red}17532}& 335.4& {\bf\color{red}17532}& 3.8& 17469& 430.9
&\textrm{be100.4}&100& 19125& {\bf\color{red}19122}& 336.9& {\bf\color{red}19122}& 3.4& 19062& 381.0\\
 \hline

 \textrm{be100.5}&100& 15868& {\bf\color{red}15812}& 331.7& {\bf\color{red}15812}& 3.5& 15786& 345.9
&\textrm{be100.6}&100& 17368& {\bf\color{red}17368}& 333.4& {\bf\color{red}17368}& 3.8& 17316& 440.2\\
 \hline

 \textrm{be100.7}&100& 18629& {\bf\color{red}18601}& 349.5& {\bf\color{red}18601}& 3.6& 18463& 369.9
&\textrm{be100.8}&100& 18649& {\bf\color{red}18641}& 341.0&{\bf\color{red} 18641}& 3.6& 18225& 347.6\\
 \hline

 \textrm{be100.9}&100& 13294& {\bf\color{red}13254}& 357.7& {\bf\color{red}13254}& 4.0& 13110& 314.2
&\textrm{be100.10}&100& 15352& {\bf\color{red}15352}& 333.1& {\bf\color{red}15352}& 3.8& 15132& 358.3\\
  \hline

 \textrm{be120.3.1}&120& 13067& {\bf\color{red}13067}& 385.5&{\bf\color{red}13067} &3.8 &12995 &517.7
&\textrm{be120.3.2}&120& 13046& {\bf\color{red}13046}& 383.0&{\bf\color{red}13046} &4.0 &{\bf\color{red}13046} &414.1 \\
 \hline

 \textrm{be120.3.3}&120& 12418& {\bf\color{red}12418}& 385.8&{\bf\color{red}12418} &4.7 &12372 &148.0
&\textrm{be120.3.4}&120& 13867& {\bf\color{red}13867}& 367.4&{\bf\color{red}13867} &3.7 &13771 &511.2 \\
 \hline

 \textrm{be120.3.5}&120& 11403& {\bf\color{red}11384}& 393.5&{\bf\color{red}11384} &3.6 &11336 &541.0
&\textrm{be120.3.6}&120& 12915& {\bf\color{red}12866}& 389.5&{\bf\color{red}12866} &3.6 &12811 &164.4 \\
 \hline

 \textrm{be120.3.7}&120& 14068& {\bf\color{red}14054}& 392.5&{\bf\color{red}14054} &3.7 &{\bf\color{red}14054} &585.3
&\textrm{be120.3.8}&120& 14701& 14560& 384.4&14560& 3.8 &{\bf\color{red}14635} &491.8 \\
 \hline

 \textrm{be120.3.9}&120& 10458& {\bf\color{red}10375}& 402.9&{\bf\color{red}10375} &3.7 &10284 &472.1
&\textrm{be120.3.10}&120& 12201& {\bf\color{red}12201}& 403.1&{\bf\color{red}12201} &4.2 &12154 &516.1 \\
 \hline

 \textrm{be120.8.1}&120& 18691& {\bf\color{red}18658}& 424.1&{\bf\color{red}18658} &4.2 &18413 &176.6
&\textrm{be120.8.2}&120& 18827& {\bf\color{red}18797}& 399.7&{\bf\color{red}18797} &3.8 &18589 &453.2 \\
 \hline

 \textrm{be120.8.3}&120& 19302& {\bf\color{red}19228}& 420.5&{\bf\color{red}19228} &3.7 &19179 &490.2
&\textrm{be120.8.4}&120& 20765& {\bf\color{red}20765}& 396.1&{\bf\color{red}20765} &3.3 &20610 &515.8 \\
 \hline

 \textrm{be120.8.5}&120& 20417& {\bf\color{red}20381}& 418.9&{\bf\color{red}20381} &3.7 &20285 &575.0
&\textrm{be120.8.6}&120& 18482& {\bf\color{red}18482}& 403.2&{\bf\color{red}18482} &4.0 &18337 &187.8 \\
 \hline

 \textrm{be120.8.7}&120& 22194& {\bf\color{red}22131}& 404.6&{\bf\color{red}22131} &3.5 &22000 &456.1
&\textrm{be120.8.8}&120& 19534& {\bf\color{red}19236}& 456.0&{\bf\color{red}19236} &4.3 &19107 &467.9 \\
 \hline

 \textrm{be120.8.9}&120& 18195& {\bf\color{red}18181}& 402.0&{\bf\color{red}18181} &3.6 &17938 &491.7
&\textrm{be120.8.10}&120& 19049& {\bf\color{red}19035}& 407.5&{\bf\color{red}19035} &3.4 &19022 &578.0 \\
 \hline

 \textrm{be150.3.1}&150& 18889& {\bf\color{red}18889}& 533.9&{\bf\color{red}18889} &4.2 &18687 &767.9
&\textrm{be150.3.2}&150& 17816& {\bf\color{red}17816}& 553.7&{\bf\color{red}17816} &4.2 &17406 &760.5 \\
 \hline

 \textrm{be150.3.3}&150& 17314& {\bf\color{red}17314}& 550.0&{\bf\color{red}17314} &3.7 &17242 &781.0
&\textrm{be150.3.4}&150& 19884& 19878& 509.7&19878 &3.4 &{\bf\color{red}19884} &334.8 \\
 \hline

 \textrm{be150.3.5}&150& 16817& {\bf\color{red}16817}& 531.4&{\bf\color{red}16817} &3.6 &16714 &751.4
&\textrm{be150.3.6}&150& 16780& {\bf\color{red}16641}& 566.7&{\bf\color{red}16641} &3.8 &16457 &800.4 \\
 \hline

 \textrm{be150.3.7}&150& 18001& {\bf\color{red}18001}& 541.0&{\bf\color{red}18001} &3.7 &17813 &736.1
&\textrm{be150.3.8}&150& 18303& {\bf\color{red}18280}& 548.6&{\bf\color{red}18280} &4.0 &18069 &649.7 \\
 \hline

 \textrm{be150.3.9}&150& 12838& {\bf\color{red}12780}& 568.8&{\bf\color{red}12780} &4.1 &12265 &665.5
&\textrm{be150.3.10}&150& 17963& {\bf\color{red}17953}& 547.3&{\bf\color{red}17953} &3.9 &17724 &735.7 \\
 \hline

 \textrm{be150.8.1}&150& 27089& {\bf\color{red}27042}& 580.0&{\bf\color{red}27042} &4.1 &26450 &694.7
&\textrm{be150.8.2}&150& 26779& {\bf\color{red}26608}& 580.6&{\bf\color{red}26608} &3.8 &26288 &737.5 \\
 \hline

 \textrm{be150.8.3}&150& 29438& {\bf\color{red}29358}& 561.6&{\bf\color{red}29358} & 3.7&28896 &360.4
&\textrm{be150.8.4}&150& 26911& {\bf\color{red}26911}& 580.3&{\bf\color{red}26911} &3.8 &26366 &722.1 \\
 \hline

 \textrm{be150.8.5}&150& 28017& {\bf\color{red}27965}& 556.3&{\bf\color{red}27965} &3.9 &27869 &769.3
&\textrm{be150.8.6}&150& 29221& {\bf\color{red}29152}& 582.9&{\bf\color{red}29152} &3.6 &28631 &626.7 \\
 \hline

 \textrm{be150.8.7}&150& 31209& {\bf\color{red}31164}& 609.6&{\bf\color{red}31164} &3.8 &30751 &661.9
&\textrm{be150.8.8}&150& 29730& {\bf\color{red}29656}& 582.8&{\bf\color{red}29656} &4.1 &29147 &669.0 \\
 \hline

 \textrm{be150.8.9}&150& 25388& {\bf\color{red}25298}& 574.7&{\bf\color{red}25298} &4.0 &24904 &702.5
&\textrm{be150.8.10}&150& 28374& {\bf\color{red}28374}& 563.3&{\bf\color{red}28374} &3.8 &27885 &630.2 \\
 \hline

 \textrm{be200.3.1}&200& 25453& {\bf\color{red}25294}& 822.1&{\bf\color{red}25294} &4.4 &24123 &1199.4
&\textrm{be200.3.2}&200& 25027& {\bf\color{red}24983}& 825.8&{\bf\color{red}24983} &4.5 &24808 &1233.3 \\
 \hline

 \textrm{be200.3.3}&200& 28023& {\bf\color{red}27994}& 785.3&{\bf\color{red}27994} &4.7 &27585 &1299.9
&\textrm{be200.3.4}&200& 27434& {\bf\color{red}27363}& 823.4&{\bf\color{red}27363} &4.5 &27054 &1242.5 \\
 \hline

 \textrm{be200.3.5}&200& 26355& {\bf\color{red}26353}& 796.3&{\bf\color{red}26353} &4.8 &25390 &1214.3
&\textrm{be200.3.6}&200& 26146& {\bf\color{red}26138}& 785.3&{\bf\color{red}26138} &4.5 &25518 &1291.5 \\
 \hline

 \textrm{be200.3.7}&200& 30483& {\bf\color{red}30483}& 786.9&{\bf\color{red}30483} &4.1 &30086 &1232.7
&\textrm{be200.3.8}&200& 27355& {\bf\color{red}27287}& 775.0&{\bf\color{red}27287} & 4.6 &26944 &421.5 \\
 \hline

 \textrm{be200.3.9}&200& 24683& {\bf\color{red}24648}& 762.6&{\bf\color{red}24648} &4.8 &24172 &1227.3
&\textrm{be200.3.10}&200& 23842& {\bf\color{red}23708}& 806.0&23699 &4.7 &23307 &1255.9 \\
 \hline

 \textrm{be200.8.1}&200& 48534& {\bf\color{red}48419}& 835.6&{\bf\color{red}48419} &4.5 &47909 &1207.0
&\textrm{be200.8.2}&200& 40821& {\bf\color{red}40662}& 836.9&{\bf\color{red}40662} &4.6 &39043 &1242.3 \\
 \hline

 \textrm{be200.8.3}&200& 43207&{\bf\color{red} 43131}& 822.1&43095 &4.5 &41641 &1252.4
&\textrm{be200.8.4}&200& 43757& {\bf\color{red}43625}& 859.9&{\bf\color{red}43625} &4.3 &42796 &1257.0 \\
 \hline

 \textrm{be200.8.5}&200& 41482& {\bf\color{red}41214}& 793.5&{\bf\color{red}41214} &4.2 &40253 &1173.0
&\textrm{be200.8.6}&200& 49492& {\bf\color{red}49492}& 821.0&{\bf\color{red}49492} &4.3 &49382 & 1251.8\\
 \hline

 \textrm{be200.8.7}&200& 46828& {\bf\color{red}46813}& 846.6&{\bf\color{red}46813} &4.6 &46024 &1218.4
&\textrm{be200.8.8}&200& 44502& {\bf\color{red}44502}& 880.1&{\bf\color{red}44502} &4.3 &43208 &1212.1 \\
 \hline

 \textrm{be200.8.9}&200& 43241& {\bf\color{red}43241}& 825.7&{\bf\color{red}43241} &4.8 &42625 &1256.3
&\textrm{be200.8.10}&200& 42832& {\bf\color{red}42788}& 830.2&{\bf\color{red}42788} &4.6 &41594 & 1243.7\\
 \hline

 \textrm{be250.1}&250& 24076& {\bf\color{red}24067}& 985.5&{\bf\color{red}24067} &5.5 &23815 &2261.2
&\textrm{be250.2}&250& 22540& {\bf\color{red}22361}& 1026.2&{\bf\color{red}22361} &4.8 &22344 &2272.0 \\
 \hline

 \textrm{be250.3}&250& 22923& {\bf\color{red}22915}& 990.8&{\bf\color{red}22915} &5.4 &22783 &2162.3
&\textrm{be250.4}&250&24649 & {\bf\color{red}24610}& 980.3&{\bf\color{red}24610} &5.3 &24494 &2072.0 \\
 \hline

 \textrm{be250.5}&250& 21057& {\bf\color{red}21046}& 986.8&21040 &4.6 &20760 &2214.3
&\textrm{be250.6}&250& 22735& {\bf\color{red}22735}& 1019.2&{\bf\color{red}22735} &5.2 &22417 &2129.3 \\
 \hline

 \textrm{be250.7}&250& 24095& {\bf\color{red}24095}& 972.5&{\bf\color{red}24095} &5.0 &23888 &2148.0
&\textrm{be250.8}&250& 23801& {\bf\color{red}23709}& 991.5&{\bf\color{red}23709} &4.8 &23350 &1060.8 \\
 \hline

 \textrm{be250.9}&250& 20051& {\bf\color{red}19970}& 934.2&{\bf\color{red}19970} &4.9 &19729 &2212.4
&\textrm{be250.10}&250& 23159& {\bf\color{red}23077}& 1009.7&{\bf\color{red}23077} &4.8 &23009 &2102.1 \\
 \hline

 \textrm{gka8a}&100& 11109& 11101& 299.8&11101 &5.1 &{\bf\color{red}11109} &1.5
&\textrm{gka9b}&100& 137&0 & 376.0&0 &4.9 &{\bf\color{red}137} &246.2 \\
 \hline

 \textrm{gka10b}&125& 154&0 & 536.0&0 &4.8 &{\bf\color{red}154} &318.1
&\textrm{gka7c}&100& 7225&{\bf\color{red}7225} & 280.7&{\bf\color{red}7225} &4.2 &{\bf\color{red}7225} &3.6 \\
 \hline

 \textrm{gka1d}&100& 6333&6328 & 307.2&6328 &4.0 &{\bf\color{red}6333} &10.5
&\textrm{gka2d}&100& 6579&{\bf\color{red}6459} & 330.1&{\bf\color{red}6459} &3.7 &6446 &387.4 \\
 \hline

 \textrm{gka3d}&100& 9261&{\bf\color{red}9193} & 327.9&{\bf\color{red}9193} &3.8 &9179 &401.9
&\textrm{gka4d}&100& 10727&{\bf\color{red}10707} & 323.7&{\bf\color{red}10707} &4.0 &10695 &293.2 \\
 \hline

 \textrm{gka5d}&100& 11626&{\bf\color{red}11596} & 340.7&{\bf\color{red}11596} &3.6 &11410 &364.3
&\textrm{gka6d}&100& 14207&{\bf\color{red}14121} & 318.9&{\bf\color{red}14121} &4.0 &13598 &312.3 \\
 \hline

 \textrm{gka7d}&100& 14476&{\bf\color{red}14476} & 322.5&{\bf\color{red}14476} &3.5 &14325 &368.3
&\textrm{gka8d}&100& 16352&{\bf\color{red}16352} & 327.9&{\bf\color{red}16352} &3.8 &16254 &392.3 \\
 \hline

 \textrm{gka9d}&100& 15656&{\bf\color{red}15577} & 342.8&{\bf\color{red}15577} &3.8 &12631 &84.2
&\textrm{gka10d}&100& 19102&{\bf\color{red}19102} & 340.0&{\bf\color{red}19102} &3.8 &{\bf\color{red}19102} &362.9 \\
 \hline

 \textrm{gka1e}&200& 16464&16405 & 738.3&16405 &4.1 &{\bf\color{red}16431} &1390.9
&\textrm{gka2e}&200& 23395&{\bf\color{red}23360} & 791.9&{\bf\color{red}23360} &4.8 &23083 &1319.1 \\
 \hline

 \textrm{gka3e}&200& 25243&{\bf\color{red}25243} & 790.5&{\bf\color{red}25243} &4.2 &24444 &1228.8
&\textrm{gka4e}&200& 35594&{\bf\color{red}35559} & 802.2&{\bf\color{red}35559} &4.7 &35350 &1220.2 \\
 \hline

 \textrm{gka5e}&200& 35154&{\bf\color{red}35062} & 826.6&{\bf\color{red}35062} &4.4 &34145 &1119.9
& & & & & & & & & \\
 \hline
 \end{longtable}
 \end{center}}
 \vspace{-0.3cm}
 \subsection{Comparisons with dcSDPT3 for G-set instances}\label{sec5.3}

 Given a graph $\mathcal{G}=(\mathcal{V},\mathcal{E})$ with $|\mathcal{V}|=n$
 and a weight matrix $W\in\mathbb{S}^n$, the max-cut problem partitions $\mathcal{V}$
 into two nonempty sets $(\mathcal{Z}, \mathcal{V}\backslash\mathcal{Z})$ so that
 the total weights of the edges in the cut is maximized. It can be
 reformulated as \eqref{DC-BPP} with $p=n$ and $f(X)=\langle C,X\rangle$
 for $C=\frac{W-{\rm diag}(We)}{4}$. We solve the G-set instances with $W$
 from \url{http://www.stanford.edu/yyye/yyye/Gset} for $800$ to $20000$ variables.
 Table \ref{table2} reports the relative gap and infeasibility of the outputs of
 dcFAC and dcSDPT3 and the CPU time taken, where ``-'' means that the CPU time is
 more than \textbf{2} hours. Since it is impractical for an exact method,
 say BiqCrunch \cite{Krislock17}, to yield optimal values for these instances,
 Table \ref{table1} lists the known best values got with some advanced heuristic methods \cite{Wu15,Shylo15}.
 \setlength{\tabcolsep}{0.5mm}{
 \begin{center}
 \setlength{\abovecaptionskip}{0pt}%
 \setlength{\belowcaptionskip}{2pt}%
 \captionsetup{font={scriptsize}}
 \renewcommand\arraystretch{1.5}
  \scriptsize
 \begin{longtable}{|c|c|ccc|ccc||c|c|ccc|ccc|}
  \caption{Numerical results of dcFAC and dcSDPT3 for the G-set instances}\label{table2}\\
  \hline
  & &\multicolumn{3}{c|}{\bf dcSDPT3}&\multicolumn{3}{c||}{\bf dcFAC}& & &\multicolumn{3}{c|}{\bf dcSDPT3}
  &\multicolumn{3}{c|}{\bf dcFAC}\\
  \cline{3-8}\cline{11-16}
  Name($n$)&Bval&\!gap($\%$)\!&\!time\!&\!infeas\!&\!gap($\%$)\!&\!time\!&\!infeas\!
  &Name($n$)&Bval&\!gap($\%$)\!&\!time\!&\!infeas\!&\!gap($\%$)\!&\!time\!&\!infeas\\
  \hline
   G1(800)&11624&0.052&442.7&2.6e-13&{\bf\color{red}0.017}&7.9&4.0e-10
  &G2(800)&11620&0.129&467.3&3.1e-13&{\bf\color{red}0.069}&7.2&5.8e-10\\
 \hline

  G3(800)&11622 & 0.146& 442.0 & 3.1e-12&{\bf\color{red}0.121}&7.2&1.5e-9
 &G4(800)&11646 & 0.232& 438.2 & 3.8e-13&{\bf\color{red}0.112}&6.9&2.5e-10\\
 \hline

  G5(800)& 11631 & {\bf\color{red}0.146}& 441.5 & 5.2e-13&{\bf\color{red}0.146}&6.4&3.3e-10
 &G6(800)& 2178  & 1.240& 478.4 &6.3e-13&\!{\bf\color{red}0.321}\!&\!6.3\!&\!1.9e-10\\
 \hline

  G7(800)& 2006  &1.645& 512.4 &5.3e-13&{\bf\color{red}1.047}&6.5&2.2e-10
 &G8(800)& 2005  &1.845& 471.6 &2.7e-13&{\bf\color{red} 1.197}&6.7\!&\!2.7e-10\\
 \hline

  G9(800)& 2054  & 1.412& 462.3 &6.2e-13&\!{\bf\color{red}1.169}\!&\!6.4\!&\!4.2e-10
 &G10(800)&2000  & 1.200& 483.4 &4.5e-13 & {\bf\color{red}0.600}&4.8 &3.4e-10\\
 \hline

  G11(800)& 564 &{\bf\color{red} 2.128}& 221.0 &3.7e-11 &{\bf\color{red} 2.128}&6.5 &8.9e-10
 &G12(800)& 556 & 2.518& 220.3 &9.3e-12 &{\bf\color{red} 1.439}&6.7 &7.4e-10\\
 \hline

  G13(800)& 582 & 1.031& 216.7 &3.4e-13 & {\bf\color{red} 0.687}&6.8 &5.5e-10
 &G14(800)& 3064& 0.783& 255.5 & 1.1e-11 &{\bf\color{red} 0.490} &6.8 &1.6e-9\\
 \hline

 G15(800)& 3050  & 0.623&250.9& 9.3e-12&{\bf\color{red} 0.557}&6.5 &3.3e-9
 &G16(800)& 3052 &0.590&261.5 &7.4e-11 &{\bf\color{red} 0.590}&8.6 &3.4e-9\\
 \hline

 G17(800)& 3047  & 0.295&210.4 &1.1e-11& {\bf\color{red}0.230}&6.4 &2.3e-9
 &G18(800)& 992  & 1.613&330.5 & 1.9e-12 &{\bf\color{red}1.411}&8.9 &1.7e-9\\
 \hline

 G19(800)& 906   & 3.091&393.3 &6.0e-12 &{\bf\color{red} 2.870}&8.9 &2.0e-9
 &G20(800)& 941  & 1.594&349.5 & 1.0e-11 & {\bf\color{red}1.169}&7.5 & 1.2e-9\\
 \hline

 G21(800)& 931 &2.578& 349.6 & 5.7e-12 & {\bf\color{red}2.256}&7.3 &1.6e-9
 &G22(2000)& 13359 & 0.397&3115.9 & 1.7e-12 &{\bf\color{red} 0.210}&18.4 &6.3e-10\\
 \hline

 G23(2000)& 13344 & 0.427&3147.8 & 2.6e-12 &{\bf\color{red} 0.345}&18.4 &4.9e-10
 &G24(2000) & 13337 & 0.300&2977.3 & 2.7e-12 & {\bf\color{red} 0.285}&18.4 &2.4e-10\\
 \hline

 G25(2000) & 13340 & 0.502&3077.2 & 1.2e-12 & {\bf\color{red} 0.315}&19.1 &1.3e-9
 &G26(2000)& 13328 & 0.398&2992.7 & 2.4e-12 & {\bf\color{red} 0.165}&18.6 &3.1e-10\\
 \hline

 G27(2000) & 3341  & 1.137&3271.5 & 4.1e-13 & {\bf\color{red} 0.838}&18.9 &1.0e-9
 &G28(2000) & 3298  & 1.152&3211.4 &4.8e-13 &{\bf\color{red} 0.303}&18.5 &9.3e-10\\
 \hline

 G29(2000) & 3405 & 1.234&3113.3& 3.9e-13 &{\bf\color{red} 1.057}&18.3 &7.9e-10
 &G30(2000) & 3413  & 1.143&3413.7 & 3.0e-12 & {\bf\color{red} 1.055}&18.4 &4.6e-10\\
 \hline

 G31(2000) & 3310  & 1.692&3388.2& 5.7e-13 & {\bf\color{red} 0.665}&18.7 &3.5e-10
 &G32(2000) & 1410  & 2.553&1618.7 & 3.6e-12 & {\bf\color{red} 1.277}&17.9 & 7.9e-10\\
 \hline

 G33(2000) & 1382  & 2.316 &1811.0& 5.4e-13 &{\bf\color{red}1.447}&17.2 & 1.0e-9
 &G34(2000) & 1384  &1.734 &1650.6 & 4.5e-13 & {\bf\color{red} 1.301}&17.6 &6.7e-10\\
 \hline

 G35(2000) & 7687  & 0.650&2137.9 & 4.4e-11 & {\bf\color{red} 0.455}&18.1 &2.1e-9
 &G36(2000) & 7680  &0.651&2356.0 &3.3e-12 & {\bf\color{red}0.430}&16.8 &3.3e-9\\
 \hline

 G37(2000)& 7691 & 0.572&2294.6 & 2.5e-12 & {\bf\color{red} 0.468}&18.7 & 2.1e-9
 &G38(2000) & 7688 &0.820&2823.5 & 6.3e-13 & {\bf\color{red} 0.650}&21.2 &3.0e-10\\
 \hline

 G39(2000)& 2408 & 2.533&3073.3 & 3.6e-12 &{\bf\color{red} 1.827}&20.4 &1.4e-9
 &G40(2000)& 2400 &2.375&2986.1 & 4.3e-12 &{\bf\color{red} 1.458}&22.4 & 2.4e-9\\
 \hline

 G41(2000) & 2405 & 1.746&2687.7 & 2.0-10 & {\bf\color{red}0.707}&24.6 &1.2e-9
 &G42(2000) & 2481 &3.023&3684.9 & 5.8e-12 & {\bf\color{red} 1.854}&24.3 &1.5e-9\\
 \hline

 G43(1000) & 6660 & 0.210&594.1 & 3.7e-13 & {\bf\color{red} 0.090}&7.9 &3.3e-10
 &G44(1000) & 6650 & 0.135&636.7 & 2.2e-13 &{\bf\color{red} 0.105}&8.2 &4.0e-10\\
 \hline

 G45(1000) & 6654 & 0.586&640.6 & 2.3e-13 & {\bf\color{red} 0.256}&8.3 &3.4e-10
 &G46(1000) & 6649 & 0.241&597.1 & 1.3e-12 & {\bf\color{red}0.271}&8.2&3.0e-10\\
 \hline

 G47(1000)& 6657 & 0.300&592.4 & 1.1e-12 & {\bf\color{red} 0.210}&8.3 &2.4e-10
 &G48(3000)&6000& {\bf\color{red}0 } &148.4 & 6.9e-14& {\bf\color{red}0 }&13.5 &2.0e-12\\
 \hline

 G49(3000)& 6000 &{\bf\color{red}0 } &142.6 & 8.0e-14 &  {\bf\color{red}0}&13.5 &3.2e-12
 &G50(3000) & 5880 &{\bf\color{red}0 }&3095.6 &1.1e-12 &  {\bf\color{red}0}&25.9 &1.5e-11\\
 \hline

 G51(1000) & 3848 &0.702&402.7 & 7.5e-12 &{\bf\color{red} 0.676}&8.3 &3.0e-9
 &G52(1000) & 3851 & 0.571&430.5 & 1.3e-12 & {\bf\color{red}0.545}&11.2 &3.5e-9 \\
 \hline

 G53(1000) & 3850 & {\bf\color{red}0.571}&343.5 & 1.6e-10 & 0.623&10.3 &3.1e-9
 &G54(1000)  & 3852 & 0.649&378.9 & 7.7e-11 &{\bf\color{red} 0.441}&9.6 &2.9e-9\\
 \hline

 G55(5000)& 10299 & -& - &- & 0.437& 73.2 &1.5e-9
 &G56(5000) & 4017 & -& - &- & 1.120&73.3 &1.3e-9\\
 \hline

 G57(5000) & 3494 & -&- &- & 1.431&77.5 &6.2e-10
 &G58(5000) & 19293 & -& - &- & 0.549&74.2 &3.1e-9\\
 \hline

 G59(5000) & 6086 & -& - &- &2.021&97.0 & 1.3e-9
 &G60(7000) & 14188 & -& - &- &0.585&120.5 & 1.2e-9\\
 \hline

 G61(7000) & 5796 & -& - &- & 1.346 &123.3 &9.7e-10
 &G62(7000) & 4870 & -& - &- & 1.602&137.2 & 6.2e-10\\
 \hline

 G63(7000) & 27045 & -& - &- &0.669&139.0 & 3.8e-9
 &G64(7000) & 8751 & -& -  &- & 2.080&161.8 &2.4e-9\\
 \hline

 G65(8000)& 5562 & -& -  &- & 1.654&172.7 & 5.3e-10
 &G66(9000) & 6364 & -& -  &- & 1.917&218.2 &3.0e-10\\
 \hline

 G67(10000)& 6950 & -& -  &- & 1.496&270.5 & 1.7e-10
 & G70(10000) & 9591 & -& - &- & 0.250&216.8 & 2.6e-9\\
 \hline

 G72(10000) & 7006 & -& - &- &1.827&259.6 & 1.7e-10
 & G77(14000) & 9938 & -& - &- &1.872&471.6 & 7.5e-10 \\
\hline
 G81(20000) & 14048 & -& - &- &1.751&\!879.6\!&\!9.6e-10
 &  &  & &  & &&&\\
 \hline
 \end{longtable}
 \end{center}}

 We see that the outputs of dcFAC have the least gap for almost all instances,
 though their infeasibility is a little worse than that of dcSDPT3.
 The relative gaps of the outputs for dcFAC and dcSDPT3 are respectively
 at most \textbf{2.870\%} and \textbf{3.091\%}. When $n=5000$,
 the CPU time taken by dcSDPT3 is more than $2$ hours, but dcFAC yields
 the desirable result for the instance with $n=20000$ in $900$ seconds.
 By comparing the results of dcSDPT3 with those in Table \ref{table1},
 we conclude that the use of $\rho_0=0.1,\sigma=1.05$ leads to its worse performance.
 We also compare the relative gaps of dcFAC with the relative gaps
 for the rounding of its outputs, and find that their maximal error is \textbf{8.20e-9}.
 This means that the rounding of the final output has little influence on
 the objective value if the infeasibility is in the magnitude of $10^{-9}$.
 \subsection{Comparisons with dcSDPT3 for OR-Library instances}\label{sec5.4}

 This part compares the performance of dcFAC with that of dcSDPT3 for
 $\max_{z\in\{0,1\}^n}z^{\mathbb{T}}Az$, with $A\in\mathbb{S}^n$
 from the OR-Library \url{http://people.brunel.ac.uk/~mastjjb/jeb/orlib/bqpinfo.html}.
 Table \ref{table3} reports the relative gap and infeasibility of their outputs,
 the CPU time taken, and the known best values obtained in \cite{Palubeckis04}
 with an advanced heuristic method. We see that the relative gaps of the outputs
 for dcFAC and dcSDPT3 are respectively not more than \textbf{0.688\%} and \textbf{0.671\%},
 and the outputs of dcFAC have the less relative gap for most instances with
 infeasibility less than $10^{-9}$. When $n=2500$, dcFAC yields the desired
 result in \textbf{40}s but dcSDPT3 can not yield the result in \textbf{2}h.
 \setlength{\tabcolsep}{0.6mm}{
 \begin{center}
  \setlength{\abovecaptionskip}{0pt}%
 \setlength{\belowcaptionskip}{2pt}%
 \captionsetup{font={scriptsize}}
 \renewcommand\arraystretch{1.5}
 \scriptsize
 \begin{longtable}{|c|c|ccc|ccc||c|c|ccc|ccc|}
  \caption{Numerical results of dcSDPT3 and dcFAC for the OR-Library instances}\label{table3}\\
  \hline
  &\!&\multicolumn{3}{c|}{\bf dcSDPT3}&\multicolumn{3}{c||}{\bf dcFAC}&\!& &
  \multicolumn{3}{c|}{\bf dcSDPT3}&\multicolumn{3}{c|}{\bf dcFAC}\\
  \cline{3-8} \cline{11-16}
  Name&Bval&\!gap($\%$)\!&\!time\!&\!infeas\!&\!gap($\%$)\!&\!time\!&\!infeas\!&Name&Bval
  &\!gap($\%$)\!&\!time\!&\!infeas\!&\!gap($\%$)\!&\!time\! &\!infeas\\
  \hline
 \textrm{1000\_1}&371438& 0.257& 1403.5 & 1.1e-11 &{\bf\color{red}0.173}& 11.8& 5.8e-10
 &\textrm{1000\_2}&354932&0.395& 1472.0 & 4.0e-12&{\bf\color{red}0.291}& 11.0& 2.7e-10\\
 \hline
 \textrm{1000\_3} & 371236 &0.255& 1494.8 & 1.2e-11 & {\bf\color{red}0.233}&11.1 & 5.1e-10
 &\textrm{1000\_4} & 370675 &0.235& 1481.0 & 3.4e-12 &{\bf\color{red}0.157}&11.1 & 8.4e-10\\
  \hline
 \textrm{1000\_5} & 352760 &0.339& 1473.3  & 1.5e-11 &{\bf\color{red}0.149}&11.0 & 1.0e-9
 &\textrm{1000\_6} & 359629 &0.439&1431.8 & 8.3e-12 &{\bf\color{red}0.432}&11.0& 2.3e-10\\
  \hline
 \textrm{1000\_7} & 371193 &{\bf\color{red}0.671}& 1464.1 & 7.4e-12 &0.688&11.0 & 6.1e-10
 &\textrm{1000\_8} & 351994 &0.388& 1462.4 &4.3e-12 &{\bf\color{red}0.354}&11.0& 2.7e-10\\
  \hline
 \textrm{1000\_9} & 349337 &0.192& 1478.3 &4.2e-12 &{\bf\color{red}0.034}&10.7 & 2.5e-10
 &\textrm{1000\_10} & 351415 & 0.378&1414.9&7.1e-12 &{\bf\color{red}0.191}&10.9 & 7.9e-10\\
 \hline
 \textrm{2500\_1} & 1515944 & -& -& -& 0.249 &33.3 & 1.2e-10
 &\textrm{2500\_2} & 1471392 & -& -& -& 0.172 &33.1 &2.7e-10\\
  \hline
 \textrm{2500\_3} & 1414192 & -& -& -& 0.338 &32.0 & 1.6e-10
 &\textrm{2500\_4} & 1507701 & -& -& -& 0.216 &32.5 & 2.6e-10\\
 \hline
 \textrm{2500\_5} & 1491816 & -& -& -& 0.184 &31.7 & 1.5e-10
 &\textrm{2500\_6} & 1469162 & -& -& -& 0.222 &31.3 & 3.1e-10\\
 \hline
 \textrm{2500\_7} & 1479040 & -& -& -& 0.377 &32.2& 1.3e-10
 &\textrm{2500\_8} & 1484199 & -& -& -& 0.147 &33.1& 9.8e-10\\
 \hline
 \textrm{2500\_9} & 1482413 & -& -& -& 0.275 &34.3 & 1.7e-10
 &\textrm{2500\_10} & 1483355 & -& -& -& 0.307 &31.9 & 1.5e-10\\
 \hline
 \end{longtable}
 \end{center}}
  \vspace{-0.3cm}
 \subsection{Numerical results of dcFAC for Palubeckis instances}\label{sec5.5}

 This part provides the results of dcFAC for solving
 $\max_{z\in\{0,1\}^n}z^{\mathbb{T}}Az$ with $A\in\mathbb{S}^n$ from
 the Palubeckis instances \url{https://www.personalas.ktu.lt/~ginpalu/},
 and the known best value obtained in \cite{Glover10} with an advanced
 heuristic method. Since these instances involve more than $3000$ variables,
 and it is time consuming for dcSDPT3 to compute an instance, we do not compare
 the results of dcFAC with those of dcSDPT3. From Table \ref{table4}, the outputs
 of dcFAC have the relative gaps at most \textbf{0.356\%} for the $21$ instances.
 \setlength{\tabcolsep}{1.5mm}{
 \begin{center}
 \setlength{\abovecaptionskip}{0pt}%
 \setlength{\belowcaptionskip}{2pt}%
 \captionsetup{font={scriptsize}}
 \renewcommand\arraystretch{1.5}
 \scriptsize
 \begin{longtable}{|c|c|ccc||c|c|ccc|}
 \caption{Numerical results of dcFAC for the Palubeckis instances}\label{table4}\\
 \hline
  &\!&\multicolumn{3}{c||}{\bf dcFAC}& &\!&\multicolumn{3}{c|}{\bf dcFAC}\\
  \cline{3-5} \cline{8-10}
  Instance&Bval&\!gap($\%$) &time(s) &infeas &Instance&Bval&gap($\%$) &time(s) &infeas \\
 \hline

 p3000.1 & 3931583 & 0.347 &53.1& 2.9e-10& p3000.2 & 5193073 & 0.233 & 53.8& 7.5e-11\\
 \hline

 p3000.3 & 5111533 &0.320 & 53.2& 1.6e-10& p3000.4 & 5761822 &0.283 & 53.5& 1.4e-10\\
 \hline

 p3000.5 & 5675625 & 0.308 & 53.5& 1.5e-10& p4000.1 & 6181830 &0.253 & 80.7& 8.5e-11\\
 \hline

 p4000.2 & 7801355 &0.345 & 83.3& 6.2e-11& p4000.3 & 7741685 &0.354 & 84.5& 7.2e-11\\
 \hline

 p4000.4 & 8711822 &0.310 & 83.5& 2.5e-11& p4000.5 & 8908979 &0.356 & 82.1& 6.0e-11\\
 \hline

 p5000.1 & 8559680 &0.308 & 115.0& 6.1e-11& p5000.2 & 10836019 &0.306 & 119.2& 1.2e-10\\
 \hline

 p5000.3 & 10489137 &0.296 & 121.2&1.1e-10& p5000.4 & 12252318 &0.274 & 120.2&1.2e-10\\
 \hline

 p5000.5 & 12731803 &0.304 & 120.2&1.1e-10& p6000.1 & 11384976 &0.273 & 159.4& 5.7e-11\\
 \hline

 p6000.2 & 14333855 &0.202 & 161.3& 7.8e-11& p6000.3 & 16132915 &0.336 & 161.5&8.7e-11\\
 \hline

 p7000.1 & 14478676 &0.272 & 202.7& 5.7e-11& p7000.2 & 18249948 &0.253 & 205.1&7.2e-11\\
 \hline

 p7000.3 & 20446407 &0.302 & 205.3& 5.3e-11& & & & & \\
 \hline
\end{longtable}
\end{center}}

\vspace{-0.3cm}
 \subsection{Comparisons with dcSNCG for UBPP instances}\label{sec5.6}

 With $X=(1;x_1;\ldots;x_q)(1;x_1;\ldots;x_q)^{\mathbb{T}}$, we can reformulate
 \eqref{UBPP} with $\vartheta$ from \eqref{vtheta} as \eqref{DC-BPP} for
 $f(X)=\prod_{i=1}^q\langle C_i,X\rangle$ with
 $C_i=\left(\begin{matrix}
          a_i & b_i^{\mathbb{T}} \\
          b_i & B_i  \\
        \end{matrix}\right)$,
 where $b_i\!=\!({\bf 0}_{n(i-1)};\frac{c_i}{2};{\bf 0}_{n(q-i)})\!\in\mathbb{R}^{nq}$
 and $B_i={\rm BlkDiag}(0,\ldots,$
 
 \noindent
 $0,Q_i,0,\ldots,0)\in\!\mathbb{S}^{nq}$ for $i=1,\ldots,q$.
 We test the performance of dcFAC and dcSNCG for solving this class of examples with $q=2$.
 To verify the efficiency of Algorithm \ref{Alg2} with varying $L_k$,
 we compare their performance with that of Algorithm \ref{Alg-factor}
 armed with \cite[Algorithm 2]{LiuPong191} (dcFAC\_ls for short), where
 Algorithm 2 of \cite{LiuPong191} is an MM method with linesearch technique
 for solving \eqref{epenalty-factor}. In addition, we also compare their performance
 with that of GloptiPoly3 \cite{Henrion09}, a software for the Lasserre relaxation of polynomial programs.
 Let $\mathbb{N}^k_r\!:=\{\alpha\in\mathbb{N}^k\,|\,\sum_{i=1}^k\alpha_i\le r\}$
 and $s(r)\!:=\!(\begin{smallmatrix} qn+r \\ qn \end{smallmatrix})$ for $k,r\in\mathbb{N}$.
 The $r(r\ge q)$-order Lasserre relaxation of \eqref{UBPP} is given by
 \[
   \inf_{y}\bigg\{\sum_{\alpha\in\mathbb{N}^{qn}_{2r}}p_{\alpha}y_{\alpha}\ \ {\rm s.t.}\
    M_{r}(y)\in\mathbb{S}_{+}^{s(r)},M_{r-1}(h_iy)=0,\ i=1,\ldots,nq\bigg\},
 \]
 where $p_{\alpha}$ is the component of the coefficient vector of $\vartheta(x)$,
 $h_i$ is the coefficient of $h_i(x)=x_i^2-1$ for $i=1,2,\ldots,qn$, and $M_{r}(y)$
 and $M_{r-1}(h_iy)$ are respectively the moment matrix of dimensions
 $s(r)$ and $s(r\!-\!1)$ (see \cite{Lasserre01} for the details).

 The first group of problems is using $Q_i\!=\!\frac{1}{4}D_i$
 $c_i\!=\!2Q_ie$ and $a_i\!=\!e^{\mathbb{T}}Q_ie+\omega_i$ with $D_i=\frac{M_i}{\|M_i\|}$
 for $i=1,2$, where the entries of each $M_i\in\mathbb{S}^l$ and $\omega_i$ are generated
 to obey the standard normal distribution. Such a problem is a reformulation of
 \begin{equation}\label{polyprob1}
  \max_{x,y\in\{0,1\}^{l}}\Big\{-(x^{\mathbb{T}}D_1x+\omega_1)(y^{\mathbb{T}}D_2y+\omega_2)\Big\}.
 \end{equation}
 Table \ref{table5} reports the results of three solvers for solving \eqref{polyprob1}
 with different $l$ and those of GloptiPoly3 for solving its $2$-order Lasserre relaxation.
 For $l=2$ and $8$, the three solvers deliver the same objective value as GloptiPoly3 does,
 which now becomes the optimal since the Lasserre relaxation provides an upper bound for the optimal value.
 For $l\ge 20$, GloptiPoly3 fails to deliver the result due to out of memory, while dcFAC,
 dcFAC\_ls and dcSNCG can provide an approximate upper bound of the optimal value even for $l=1000$
 within $127s, 136s$ and $6276s$, respectively. For \textbf{12} instances, the outputs of dcFAC
 and dcFAC\_ls respectively have \textbf{8} and \textbf{6} best objective values, and dcFAC requires
 more CPU time than dcFAC\_ls does due to the worse $L_0$.

 \medskip

\setlength{\tabcolsep}{1.5mm}{
 \begin{center}
 \setlength{\abovecaptionskip}{0pt}%
 \setlength{\belowcaptionskip}{2pt}%
 \captionsetup{font={scriptsize}}
 \renewcommand\arraystretch{1.5}
 \scriptsize
 \begin{longtable}{|c|c|ccc|ccc|ccc|}
 \caption{Numerical results of dcFAC, dcFAC\_ls, GloptiPoly3 and dcSNCG for \eqref{polyprob1}}\label{table5}\\
 \hline
 \raisebox{2.0ex}[15pt]{}
  &\multicolumn{1}{c|}{GPoly3}&\multicolumn{3}{c|}{\bf dcSNCG}&\multicolumn{3}{c|}{\bf dcFAC}
  &\multicolumn{3}{c|}{\bf dcFAC\_ls}\\
  \hline
  $l$ &\! obj &\!obj &time &infeas&\!obj &time &infeas&\!obj&time &infeas\\
  \hline
  2 & 5.6696  & 5.6692&0.1& 2.1e-5&{\bf\color{red}5.6696}& 0.1& 2.2e-9& {\bf\color{red}5.6696}& 0.1&2.4e-9\\
  \hline
  8 & 22.3922 & {\bf\color{red}22.3923}&0.6& 1.8e-5&{\bf\color{red}22.3922} & 1.7& 1.5e-9& {\bf\color{red}22.3922}& 5.6&1.5e-9\\
  \hline
  20 &* &{\bf\color{red}58.1403}&1.3& 4.7e-6&57.8066 & 6.0& 1.1e-9&57.8066 &28.6 &1.0e-9\\
  \hline
  100 & * & 683.3912&16.4& 1.3e-5 &{\bf\color{red}705.0149}& 21.7& 8.1e-10&{\bf\color{red}705.0149} &16.81 &1.0e-10\\
  \hline
  200 & * & 3.6612e+3&97.9& 1.8e-5 & 3.7713e+3& 39.3& 2.3e-11&{\bf\color{red}3.7724e+3} &22.4 &9.1e-11\\
  \hline
  300 & * & 7.7539e+3&234.9& 1.2e-5 &{\bf\color{red}7.9862e+3}& 58.8& 3.9e-12&7.9800e+3 &28.7 &2.7e-11\\
  \hline
  500 & * & 2.3830e+4&881.7& 1.8e-6 &2.4436e+4& 131.5&1.5e-12&{\bf\color{red}2.4441e+4} &55.2 &7.0e-12\\
  \hline
  800 & * & 5.1182e+4&3290.9& 5.5e-5 &{\bf\color{red}5.3332e+4}& 274.9&1.6e-13&5.3248e+4 &100.7 &1.6e-12\\
  \hline
  1000& * & 8.3766e+4&6275.7& 2.6e-5 &{\bf\color{red}8.7175e+4}& 426.5&5.1e-14&8.6954e+4 &135.8 &3.4e-12\\
  \hline
  1200& * &-&-& -&{\bf\color{red}1.3474e+5}& 595.6&3.5e-14&1.3455e+5 &180.5 &1.6e-12\\
  \hline
  1500& * & -&-& -&{\bf\color{red}2.0753e+5}& 1019.2&1.8e-14&2.0736e+5 &269.0 &8.8e-13\\
  \hline
  2000& * & -&-& -&3.6092e+5& 1717.2&1.2e-14&{\bf\color{red}3.6116e+5} &459.1 &9.1e-13\\
  \hline
 \end{longtable}
 \end{center}
 }

 The second group of problems is using $Q_i\!=\!\frac{1}{4}(-1)^i\overline{W}^i,
 a_i\!=\frac{1}{4}(-1)^{i+1}e^{\mathbb{T}}\overline{W}^ie$ and $c_i=0$ with
 $\overline{W}^i=\frac{W^i}{\|W^i\|}$ for $i=1,2$, where each
 $W^i\in\mathbb{S}^l$ is chosen from the G-set and the Biq Mac Library.
 Such a problem is a reformulation of the generalized max-cut problem
 \begin{equation}\label{polyprob2}
  \max_{x,y\in\{-1,1\}^l}\frac{1}{4}\bigg\{\Big[\sum_{i<j}\overline{w}_{ij}^{1}(1-x_ix_j)\Big]
  \Big[\sum_{i<j}\overline{w}_{ij}^2(1-y_iy_j)\Big]\bigg\}.
 \end{equation}
 Table \ref{table6} reports the results of dcFAC, dcFAC\_ls and dcSNCG
 for solving problem \eqref{polyprob2} with different $(W^{1},W^2)$.
 Among \textbf{14} instances, the outputs of dcFAC and dcFAC\_ls respectively
 have \textbf{8} and \textbf{7} best objective values, and dcFAC requires
 a little more CUP time than dcFAC\_ls does. When $n=2000$, dcFAC\_ls and dcFAC
 can yield the result within $1500s$, but dcSNCG can not yield
 the result within \textbf{2} hours.
 \setlength{\tabcolsep}{1.5mm}{
 \begin{center}
  \setlength{\abovecaptionskip}{0pt}%
 \setlength{\belowcaptionskip}{2pt}%
 \captionsetup{font={scriptsize}}
 \renewcommand\arraystretch{1.5}
 \scriptsize
 \begin{longtable}{|c|c|ccc|ccc|ccc|}
  \caption{Numerical results of dcFAC, dcFAC\_ls and dcSNCG for problem \eqref{polyprob2}}\label{table6}\\
  \hline
  &\!&\multicolumn{3}{c|}{\bf dcSNCG}&\multicolumn{3}{c|}{\bf dcFAC}&\multicolumn{3}{c|}{\bf dcFAC\_ls}\\
  \hline
  $(\overline{W}^{1},\overline{W}^2)$&n&\! Obj & time &infeas & Obj &time &infeas & Obj &time &infeas\\
 \hline

 \textrm{ising2.5\_200(5,6)}&200& 8.2944e+3& 95.4& 1.7e-5 &8.4516e+3  & 93.5 & 2.3e-9&{\bf\color{red}8.4686e+3} &248.4 &2.3e-9\\
 \hline

 \textrm{ising3.0\_200(5,6)}&200&8.1765e+3& 93.6& 1.9e-6 &{\bf\color{red}8.2312e+3}  &93.0 & 2.9e-9&{\bf\color{red}8.2312e+3} &148.9 &2.8e-9\\
 \hline

 \textrm{ising2.5\_300(5,6)}&300& 2.0570e+4 & 245.0& 3.4e-5 &{\bf\color{red}2.0933e+4} & 115.4 & 1.8e-9&2.0877e+4 &115.7 &1.6e-9\\
 \hline

 \textrm{ising3.0\_300(5,6)}&300& 1.9571e+4 & 237.6& 3.7e-5 &{\bf\color{red}1.9688e+4} & 139.3 & 1.9e-9&1.9682e+4 &165.1 &2.2e-9\\
 \hline

 \textrm{t2g20(5,6)}&400&5.5053e+4 & 507.9& 1.0e-5 & {\bf\color{red}5.7777e+4}& 170.4 & 1.0e-9&5.7378e+4 &126.4 &4.2e-10\\
 \hline

 \textrm{t2g20(6,7)}&400&5.4338e+4 & 513.1 &2.1e-5 &{\bf\color{red}5.9303e+4}& 196.2 & 1.1e-9&5.9191e+4 &136.9 &1.0e-9\\
 \hline

 \textrm{t2g20(5,7)}&400&5.5202e+4 & 502.8& 1.7e-5 &5.7637e+4& 179.1 & 4.0e-10& {\bf\color{red}5.7698e+4}&152.8 &5.0e-10\\
 \hline

 \textrm{(G7,G8)}&800&3.2099e+5 & 3289.1& 5.0e-6 &{\bf\color{red}3.6194e+5}& 330.1 & 3.7e-14&3.6178e+5&138.6 &1.1e-13\\
 \hline

 \textrm{(G8,G9)}&800& 3.2154e+5 & 3203.1& 1.1e-6 &3.5405e+5& 333.2 & 2.7e-14&{\bf\color{red}3.5867e+5} &139.8 &4.1e-13\\
 \hline

 \textrm{(G9,G10)}&800&3.3605e+5 & 3172.1& 2.7e-5 &{\bf\color{red}3.5967e+5}& 319.8 & 5.2e-14& 3.5831e+5&130.4 &1.7e-13\\
 \hline

 \textrm{(G43,G44)}&1000& 1.5799e+5 & 7114.4& 6.9e-5&2.9666e+5& 692.2 & 4.4e-16&{\bf\color{red}3.0460e+5} &869.6 &3.2e-9\\
 \hline

 \textrm{(G45,G46)}&1000&1.6003e+5 & 7355.5& 2.4e-5 &2.9288e+5& 693.2 & 4.4e-16&{\bf\color{red}3.0162e+5} &930.6 &1.4e-9\\
 \hline

 \textrm{(G31,G32)}&2000& - & -&- &2.2827e+6 & 1366.6 & 4.3e-14&{\bf\color{red}2.2852e+6} & 596.4&2.9e-13\\
  \hline

 \textrm{(G33,G34)}&2000& - & -& -& {\bf\color{red}2.4285e+6}& 1377.6 & 1.2e-15&2.4180e+6 & 1127.9&3.0e-13\\
  \hline
 \end{longtable}
 \end{center}}
 \section{Conclusions}\label{sec7}

 We have proposed a relaxation approach to the UBPP \eqref{UBPP} by seeking
 a finite number of stationary points of the DC penalized matrix program
 \eqref{BPP-factor} with increasing penalty factors, and developed a globally
 convergent MM method with extrapolation to achieve such stationary points.
 The rank-one projections of its outputs are shown to be approximate feasible
 to the UBPP under a mild condition, and the upper bound of their objective values
 to the optimal value is also quantified. Numerical comparisons with SDPRR
 for \textbf{119} Biq Mac Library instances and with dcSNCG for \textbf{26}
 UBPP instances constructed with $q=2$ show that dcFAC is remarkably superior to
 SDPRR and dcSNCG by the quality of the output and the CPU time.
 The comparisons with dcSDPT3 for \textbf{119} Biq Mac Library instances and
 \textbf{112} UBQP instances indicate that dcFAC is comparable with dcSDPT3
 if the latter is using the same updating rule of $\rho$ (only possible for small-scale instances),
 otherwise is superior to dcSDPT3 in the quality of solutions and the CPU time.

\bigskip
\noindent
{\large\bf Appendix A: The proof of Proposition \ref{prop-Alg2}}

 \begin{proof}
 {\bf(i)} From the definition of $V^{k+1}$ and the feasibility of $V^k$ to problem \eqref{Vk-subprob},
 it follows that
 \begin{align}\label{key-ineq0}
  &\langle\nabla\!\widetilde{f}(U^k)\!+\!\rho \Gamma^{k},V^{k+1}\rangle
   \!+\!\rho\|V^{k+1}\|_F^2\!+\!(L_k/2)\|V^{k+1}\!-\!U^k\|_F^2\nonumber\\
  &\le \langle \nabla\!\widetilde{f}(U^k)\!+\!\rho \Gamma^{k},V^{k}\rangle
     \!+\!\rho\|V^{k}\|_F^2\!+\!(L_k/2)\|V^{k}\!-\!U^k\|_F^2.
 \end{align}
 Notice that $\Gamma^k\in\partial\widetilde{\psi}(V^k)\subseteq-\partial(-\widetilde{\psi})(V^k)$.
 From the convexity of $-\widetilde{\psi}$ and \cite[Theorem 23.5]{Roc70},
 $\widetilde{\psi}(V^{k})-(-\widetilde{\psi})^*(-\Gamma^{k})=\langle\Gamma^k,V^k\rangle$.
 Along with the expression of $\Theta_{\rho}$, we have
 \begin{align}\label{WL-Vk}
  \Theta_{\rho}(V^{k+1},\Gamma^{k},V^k)
  &\le\widetilde{f}(V^{k+1})+\langle\nabla\!\widetilde{f}(U^k),V^{k}-V^{k+1}\rangle
      +\rho\|V^k\|_F^2\!+\!\rho\widetilde{\psi}(V^k)\nonumber\\
  &\quad +\frac{\gamma\underline{L}}{2}\|V^{k+1}\!-\!V^k\|_F^2
         +\frac{L_k}{2}\|V^{k}\!-\!U^k\|_F^2-\frac{L_k}{2}\|V^{k+1}\!-\!U^k\|_F^2,\nonumber\\
  &\le \widetilde{f}(V^k)+\rho\|V^k\|_F^2\!+\!\rho\widetilde{\psi}(V^k)
  +\frac{\gamma\underline{L}}{2}\|V^{k+1}\!-\!V^k\|_F^2\nonumber\\
  &\quad +\frac{L_k+L_{\!\widetilde{f}}}{2}\|V^{k}\!-\!U^k\|_F^2
          -\frac{L_k\!-\!L_{\!\widetilde{f}}}{2}\|V^{k+1}\!-\!U^k\|_F^2
 \end{align}
 where the last inequality is using \eqref{wfrho} with $V=V^{k+1},Z=U^k$ and \eqref{-wfrho} with $V=V^{k},Z=U^k$.
 Notice that $\widetilde{\psi}(V^k)-\langle V^k,\Gamma^{k-1}\rangle\le (-\widetilde{\psi})^*(-\Gamma^{k-1})$.
 Together with the definition of $\Theta_{\rho}$ and $U^k=V^k+\beta_k(V^k\!-\!V^{k-1})$, it follows that
 \begin{align*}
  \Theta_{\rho}(V^{k+1},\Gamma^{k},V^k)
  &\le \Theta_{\rho}(V^{k},\Gamma^{k-1},V^{k-1})
       +\frac{\gamma\underline{L}}{2}\|V^{k+1}\!-\!V^k\|_F^2+\frac{L_k\!+\!L_{\!\widetilde{f}}}{2}\|V^{k}\!-\!U^k\|_F^2\nonumber\\
  &\quad -\frac{L_k\!-\!L_{\!\widetilde{f}}}{2}\|V^{k+1}\!-\!U^k\|_F^2
        -\frac{\gamma\underline{L}}{2}\|V^{k}\!-\!V^{k-1}\|_F^2\\
  &\le\Theta_{\rho}(V^{k},\Gamma^{k-1},V^{k-1})
    -\frac{L_k\!-\!\gamma\underline{L}\!-\!L_{\!\widetilde{f}}}{2}\|V^{k+1}\!-\!V^k\|_F^2\\
  &-\frac{\gamma\underline{L}\!-2L_{\!\widetilde{f}}\beta_k^2}{2}\|V^k\!-\!V^{k-1}\|_F^2
       +(L_k\!-\!L_{\!\widetilde{f}})\beta_k\langle V^{k+1}\!-\!V^k,V^{k}\!-\!V^{k-1}\rangle.
 \end{align*}
 Since
 \(
  |2(L_k\!-\!L_{\!\widetilde{f}})\beta_k\langle V^{k+1}\!-\!V^k,V^k\!-\!V^{k-1}\rangle|
   \le \mu(L_k\!-\!L_{\!\widetilde{f}})^2\|V^{k+1}\!-\!V^k\|_F^2+\frac{\beta_k^2}{\mu}\|V^k\!-\!V^{k-1}\|_F^2
  \)
 for any $\mu>0$, the following inequality holds for any $\mu>0$:
 \begin{align*}
  \Theta_{\rho}(V^{k+1},\Gamma^{k},V^k)
  &\le \Theta_{\rho}(V^{k},\Gamma^{k-1},V^{k-1})
     -\Big[\frac{\gamma\underline{L}-2L_{\!\widetilde{f}}\beta_k^2}{2}-\frac{\beta_k^2}{2\mu}\Big]\|V^k\!-\!V^{k-1}\|_F^2\\
  &\quad -\Big[\frac{L_k\!-\!\gamma\underline{L}\!-\!L_{\!\widetilde{f}}}{2}
  -\frac{(L_k\!-\!L_{\!\widetilde{f}})^2\mu}{2}\Big]\|V^{k+1}\!-\!V^k\|_F^2.
 \end{align*}
 By taking $\mu=\frac{L_k-\gamma\underline{L}\!-\!L_{\!\widetilde{f}}}{(L_k-L_{\!\widetilde{f}})^2}$,
 the desired result follows from the last inequality.

 \noindent
 {\bf(ii)-(iii)} The boundedness of $\{V^k\}$ is trivial.
 Since $\Gamma^{k}\in\widetilde{\partial}\psi(V^k)$, its boundedness
 is due to Remark \ref{remark-Alg2} (b). So, it suffices to prove part (iii).
 By part (i), the sequence $\{\Theta_{\rho}(V^{k},\Gamma^{k-1},V^{k-1})\}$
 is nonincreasing. Notice that $\Theta_{\rho}$ is proper lsc and level-bounded.
 From \cite[Theorem 1.9]{RW98}, $\Theta_{\rho}$ is bounded below. This means that
 the limit $\varpi^*:={\displaystyle\lim_{k\to\infty}}\Theta_{\rho}(V^k,\Gamma^{k},V^{k-1})$ exists.
  From $\nu_k\ge\frac{(\gamma\underline{L}-2L_{\!\widetilde{f}}\overline{\beta}^2)(L_0-L_{\!\widetilde{f}}
  -\gamma \underline{L})-(L_0-L_{\!\widetilde{f}})^2\overline{\beta}^2}{L_0-L_{\!\widetilde{f}}-\gamma\underline{L}}>0$
  and part (i), we obtain $\lim_{k\to\infty}\|V^k\!-\!V^{k-1}\|_F=0$.
  We next show that $\Theta_{\rho}\equiv\varpi^*$ on the set $\Delta_{\rho}$.
  Pick any $(\widehat{V},\widehat{\Gamma},\widehat{U})\in\Delta_{\rho}$.
  From part (ii), there exists $\mathcal{K}\subseteq\mathbb{N}$ such that
  $\lim_{\mathcal{K}\ni k\to\infty}(V^{k},\Gamma^{k-1},V^{k-1})=(\widehat{V},\widehat{\Gamma},\widehat{U})$.
  From the expression of $\Theta_{\rho}(V^{k},\Gamma^{k-1},V^{k-1})$,
  \begin{align*}
   \varpi^*
   &=\lim_{\mathcal{K}\ni k\to\infty}\big[\widetilde{f}(V^{k})+\rho\langle\Gamma^{k-1},V^{k}\rangle+\rho\|V^{k}\|_F^2
    +\rho(-\widetilde{\psi})^*(-\Gamma^{k-1})\big]\\
   &=\widetilde{f}(\widehat{V})+\rho\langle\widehat{\Gamma},\widehat{V}\rangle+\rho\|\widehat{V}\|_F^2
     +\rho(-\widetilde{\psi})^*(-\widehat{\Gamma})=\Theta_{\rho}(\widehat{V},\widehat{\Gamma},\widehat{V})
   =\Theta_{\rho}(\widehat{V},\widehat{\Gamma},\widehat{U}),
  \end{align*}
  where the second equality is by the continuity of $(-\widetilde{\psi})^*$ since
  $(-\widetilde{\psi})^*(U)=\frac{1}{4}\|U\|^2$ by \cite[Proposition 11.21]{RW98},
  the third one is using $\widehat{V}\in\mathcal{S}$
  implied by $\{V^{k}\}_{k\in\mathcal{K}}\subseteq\mathcal{S}$, and the last one is using
  $\widehat{V}=\widehat{U}$ implied by $\lim_{\mathcal{K}\ni k\to\infty}\|V^{k}\!-\!V^{k-1}\|_F=0$.

  \noindent
  {\bf(iv)} By invoking \cite[Exercise 8.8]{RW98},
  for any $(V,\Gamma,U)\!\in\!\mathcal{S}\times\mathbb{R}^{m\times p}\times\mathbb{R}^{m\times p}$,
  \begin{equation}\label{subdiff-Thetarho}
    \partial\Theta_{\rho}(V,\Gamma,U)
    =\left[\begin{matrix}
      \nabla\!\widetilde{f}(V)\!+\!2\rho V+\rho\Gamma+\gamma\underline{L}(V\!-\!U)+\mathcal{N}_{\mathcal{S}}(V)\\
      \rho V-\rho\partial(-\widetilde{\psi})^*(-\Gamma)\\
      \gamma\underline{L}(U\!-\!V)
    \end{matrix}\right].
  \end{equation}
  From the definition of $V^{k}$,
  \(
    0\in \nabla\!\widetilde{f}(U^{k-1})\!+\!\rho\Gamma^{k-1}+2\rho V^k+L_{k-1}(V^{k}\!-\!U^{k-1})
    +\mathcal{N}_{\mathcal{S}}(V^{k}).
  \)
  Since $\Gamma^{k-1}\in\partial\widetilde{\psi}(V^{k-1})\subseteq-\partial(-\widetilde{\psi})(V^{k-1})$,
  by invoking \cite[Theorem 23.5]{Roc70} we have $V^{k-1}\!\in\partial(-\widetilde{\psi})^*(-\Gamma^{k-1})$.
  By combining with the last equality yields, it follows that
  \[
    \left[\begin{matrix}
    \nabla\!\widetilde{f}(V^{k})-\!\nabla\!\widetilde{f}(U^{k-1})
    -\!L_{k-1}(V^{k}\!-\!U^{k-1})+\gamma\underline{L}(V^k\!-\!V^{k-1})\\
      \rho (V^k\!-\!V^{k-1})\\
      \gamma\underline{L}(V^{k-1}\!-\!V^k)
    \end{matrix}\right]\!\in\partial\Theta_{\rho}(V^k,\Gamma^{k-1},V^{k-1}).
  \]
  This along with $U^{k-1}=V^{k-1}+\beta_{k-1}(V^{k-1}\!-\!V^{k-2})$
  implies the desired result.  \qed
  \end{proof}

\bigskip
\noindent
{\large\bf Appendix B: Theoretical analysis of Algorithm \ref{Alg}}

 We first provide the convergence of Algorithm \ref{Alg-X}.
 From the Lipschitz continuity of $\nabla\!f$ on
 $\mathbb{B}_{\Omega}$, for every $X\!\in\mathbb{B}_{\Omega}$,
 \begin{subnumcases}{}
  \label{fdescent}
   f(X)\le f(Y)+\langle\nabla\!f(Y),X\!-\!Y\rangle+(L_{\!f}/2)\|X\!-\!Y\|_F^2;\\
   \label{-fdescent}
   -f(X)\le-f(Y)-\langle\nabla\!f(Y),X\!-\!Y\rangle+(L_{\!f}/2)\|X\!-\!Y\|_F^2,
 \end{subnumcases}
 where $L_{\!f}$ denotes the Lipschitz constant of $\nabla\!f$ in
 $\mathbb{B}_{\Omega}$.
 Algorithm \ref{Alg-X} is similar to the proximal DC algorithm proposed in \cite{LiuPong19},
 but the conclusion of \cite[Theorem 3.1]{LiuPong19} can not be directly applied to it
 since the convexity of $f$ is not required here. Inspired by the analysis technique in \cite{LiuPong19},
 we define the following potential function
 \begin{equation*}
   \Xi_{\rho}(X,W,Z)\!:=f(X)+\rho\langle I+W,X\rangle+\delta_{\Omega}(X)+\rho\delta_{\mathbb{B}}(-W)
   +\frac{L_{\!f}}{2}\|X\!-\!Z\|_F^2
 \end{equation*}
 associated to $\rho>0$, where $\mathbb{B}:=\{Z\in\mathbb{S}^p\ |\ \|Z\|_*\le 1\}$
 is the nuclear norm unit ball.
 \begin{proposition}\label{prop-Alg1}
  Let $\{(X^k,W^k)\}$ be the generated by Algorithm \ref{Alg-X}. Then,
  \begin{itemize}
   \item [(i)] \(
                 \Xi_{\rho}(X^{k+1},W^{k},X^{k})\!\le\!\Xi_{\rho}(X^{k},W^{k-1},X^{k-1})
                  \!-\frac{L_{\!f}-(L_k+L_{\!f})\beta_k^2}{2}\|X^{k}\!-\!X^{k-1}\|_F^2;
               \)

   \item [(ii)] the sequence $\{(X^k,W^k)\}$ is bounded, and consequently,
                the cluster point set of $\{(X^k,W^{k-1},\\ X^{k-1})\}$,
                denoted by $\Upsilon_{\!\rho}$, is nonempty and compact;

   \item[(iii)] the limit $\omega^*\!:=\lim_{k\to\infty}\Xi_{\rho}(X^k,W^{k-1},X^{k-1})$ exists
                whenever $\overline{\beta}<\!\sqrt{\frac{L_f}{L_0+L_f}}$,
                and moreover, $\Xi_{\rho}(X',W',Z')=\omega^*$ for every $(X',W',Z')\in\Upsilon_{\rho}$;

   \item[(iv)] for all $k\in\mathbb{N}$, with $\eta=\sqrt{9L_{\!f}^2\!+\!4L_0^2\!+\!\rho^2}$ it holds that
               \[
                 {\rm dist}(0,\partial\Xi_{\rho}(X^k,W^{k-1},X^{k-1}))
               \!\le\!\eta\big[\|X^k\!-\!X^{k-1}\|_F+\|X^{k-1}\!-\!X^{k-2}\|_F\big].
               \]
  \end{itemize}
 \end{proposition}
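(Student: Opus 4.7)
The strategy parallels the proof of Proposition~\ref{prop-Alg2} in Appendix~A, with $(X,W)$ playing the role of $(V,\Gamma)$ and $\psi$ the role of $\widetilde{\psi}$. The essential observation is that since $-\psi=\|\cdot\|$ is a norm, its Fenchel conjugate is the indicator $\delta_{\mathbb{B}}$ of the nuclear-norm unit ball, so for every $W\in\partial\psi(X)$ the Fenchel--Young equality gives $\langle W,X\rangle=\psi(X)-\delta_{\mathbb{B}}(-W)=-\|X\|$; in particular $\delta_{\mathbb{B}}(-W^k)=0$ for all $k$ by Lemma~\ref{minus-spectral}.

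For part~(i), I would exploit the $L_k$-strong convexity of the convex subproblem \eqref{Xk-subprob} (contributed by the quadratic $\tfrac{L_k}{2}\|X-Y^k\|_F^2$) to derive the two-point inequality
\begin{equation*}
  \langle\nabla f(Y^k)+\rho(I+W^k),X^{k+1}-X^k\rangle+\tfrac{L_k}{2}\|X^{k+1}-Y^k\|_F^2+\tfrac{L_k}{2}\|X^{k+1}-X^k\|_F^2\le\tfrac{L_k}{2}\|X^k-Y^k\|_F^2,
\end{equation*}
and then combine it with the descent-lemma bounds from applying \eqref{fdescent} at $(X^{k+1},Y^k)$ and \eqref{-fdescent} at $(X^k,Y^k)$; this kills the $\langle\nabla f(Y^k),X^{k+1}-X^k\rangle$ contribution and yields a bound on $f(X^{k+1})-f(X^k)+\rho\langle I+W^k,X^{k+1}-X^k\rangle$. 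To convert this into a $\Xi_\rho$-descent I would use the Fenchel--Young inequality at $(X^k,-W^{k-1})$, namely $\langle W^{k-1},X^k\rangle\ge -\|X^k\|=\langle W^k,X^k\rangle$, so that $\rho\langle W^k-W^{k-1},X^k\rangle\le 0$; then adding the bookkeeping $\tfrac{L_f}{2}\|X^{k+1}-X^k\|_F^2-\tfrac{L_f}{2}\|X^k-X^{k-1}\|_F^2$, substituting $\|X^k-Y^k\|_F^2=\beta_k^2\|X^k-X^{k-1}\|_F^2$, and discarding the nonpositive residual $-\tfrac{L_k-L_f}{2}(\|X^{k+1}-Y^k\|_F^2+\|X^{k+1}-X^k\|_F^2)$ (valid because $L_k\ge L_f$) produces the stated descent.

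Parts~(ii)--(iv) would follow the Appendix~A template. Boundedness of $\{X^k\}\subseteq\Omega$ is immediate from compactness of $\Omega$, and boundedness of $\{W^k\}$ follows from Lemma~\ref{minus-spectral} (each $W^k$ has unit nuclear norm), so $\Upsilon_\rho$ is nonempty and compact. Summing (i) together with the restriction $\overline{\beta}<\sqrt{L_f/(L_0+L_f)}$ (which makes $L_f-(L_k+L_f)\beta_k^2$ uniformly positive) gives $\sum_k\|X^k-X^{k-1}\|_F^2<\infty$, hence $\|X^k-X^{k-1}\|_F\to 0$; together with monotonicity and lower boundedness of $\Xi_\rho$ on the compact region this yields the limit $\omega^*$, and for each cluster point $(X',W',Z')$ the limits $X'\in\Omega$, $-W'\in\mathbb{B}$, $Z'=X'$ together with continuity of the smooth/linear pieces of $\Xi_\rho$ give $\Xi_\rho(X',W',Z')=\omega^*$. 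For~(iv), I would compute $\partial\Xi_\rho(X,W,Z)$ via \cite[Exercise~8.8]{RW98} as the product of $\nabla f(X)+\rho(I+W)+L_f(X-Z)+\mathcal{N}_\Omega(X)$, $\rho X-\rho\mathcal{N}_{\mathbb{B}}(-W)$ and $\{L_f(Z-X)\}$; the optimality of $X^k$ in the previous subproblem yields an element of $\mathcal{N}_\Omega(X^k)$ that cancels $\nabla f(Y^{k-1})+\rho(I+W^{k-1})$, while the Fenchel duality $-W^{k-1}\in\partial\|\cdot\|(X^{k-1})\Longleftrightarrow X^{k-1}\in\mathcal{N}_{\mathbb{B}}(-W^{k-1})$ supplies the element used in the $W$-slot. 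Bounding each component using $L_f$-Lipschitzness of $\nabla f$, $L_{k-1}\le L_0$, $\beta_{k-1}\le 1$ and $\|X^k-Y^{k-1}\|_F\le\|X^k-X^{k-1}\|_F+\|X^{k-1}-X^{k-2}\|_F$, then taking the Euclidean norm of the triple, delivers the constant $\eta$.

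The main obstacle is the quadratic-term bookkeeping in~(i): one must carefully verify that the $\tfrac{L_k}{2}$-strong-convexity gain from the subproblem exactly absorbs the two $\tfrac{L_f}{2}$-losses from \eqref{fdescent} and \eqref{-fdescent}, as well as the additional $\tfrac{L_f}{2}\|X^{k+1}-X^k\|_F^2$ introduced by the potential function, leaving only a nonnegative multiplier on $\|X^k-X^{k-1}\|_F^2$ of the precise form $(L_f+L_k)\beta_k^2-L_f$. Unlike Appendix~A, no quadratic margin beyond the bare $L_k\ge L_f$ is available here, which is why the extrapolation threshold $\overline{\beta}<\sqrt{L_f/(L_0+L_f)}$ in~(iii) is sharper than the analogous bound for Algorithm~\ref{Alg2}.
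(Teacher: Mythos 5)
Your proposal is correct and follows essentially the same route as the paper's Appendix B proof: the same strong-convexity two-point inequality for the subproblem, the same applications of \eqref{fdescent}/\eqref{-fdescent} at $(X^{k+1},Y^k)$ and $(X^k,Y^k)$, the same Fenchel--Young step $\langle W^{k-1},X^k\rangle\ge-\|X^k\|=\langle W^k,X^k\rangle$ to pass to the potential function, and the identical subgradient element for part (iv). The only cosmetic difference is that you phrase the vanishing of $\|X^k-X^{k-1}\|_F$ via summability while the paper invokes the uniform positivity of the descent coefficient directly; both rest on the same lower-boundedness of $\Xi_\rho$.
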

 \begin{proof}
  {\bf(i)} By the definition of $X^{k+1}$, the strong convexity of the objective function
  of \eqref{Xk-subprob}, and the feasibility of $X^{k}$ to the subproblem \eqref{Xk-subprob},
  it follows that
  \begin{align*}
   &\langle\nabla\!f(Y^k)\!+\!\rho(I\!+\!W^{k}),X^{k+1}\rangle +({L_k}/{2})\|X^{k+1}\!-\!Y^{k}\|_F^2\nonumber\\
   &\le \langle\nabla\!f(Y^k)\!+\!\rho(I\!+\!W^{k}),X^{k}\rangle+({L_k}/{2})\|X^{k}\!-\!Y^{k}\|_F^2
       -({L_k}/{2})\|X^{k+1}\!-\!X^k\|_F^2,
  \end{align*}
  which, after a suitable rearrangement, can be equivalently written as
  \begin{align}\label{key-ineq1}
   \rho\langle I\!+\!W^{k},X^{k+1}\rangle
   &\le \langle\nabla\!f(Y^k),X^k\!-\!X^{k+1}\rangle+\rho\langle I\!+\!W^{k},X^{k}\rangle
        +0.5L_k\|X^{k}\!-\!Y^{k}\|_F^2\nonumber\\
   &\quad -0.5L_k\|X^{k+1}\!-\!X^k\|_F^2-0.5L_k\|X^{k+1}\!-\!Y^{k}\|_F^2.
  \end{align}
  Since $W^{k}\in\partial\psi(X^{k})\subseteq-\partial(-\psi)(X^{k})$
  and the spectral function is the support of $\mathbb{B}$, we have $-W^{k}\in\mathbb{B}$
  and $-\langle W^k,X^k\rangle=\|X^k\|\ge-\langle W^{k-1},X^k\rangle$
  by \cite[Corollary 23.5.3]{Roc70}. Thus, for each $k\in\mathbb{N}$,
  $\delta_{\mathbb{B}}(W^k)=0$ and
  $\langle I\!+\!W^{k},X^k\rangle\le \langle I\!+\!W^{k-1},X^k\rangle$.
  Together with the definition of $\Xi_{\rho}$ and \eqref{key-ineq1}, it follows that
  \begin{align}\label{Lf-ineq}
   \Xi_{\rho}(X^{k+1},W^{k},X^{k})
   &\le f(X^{k+1})+\langle\nabla\!f(Y^k),X^k\!-\!X^{k+1}\rangle+\rho\langle I\!+\!W^{k-1},X^k\rangle\nonumber\\
   &\ +\frac{L_k}{2}\|X^{k}\!-\!Y^{k}\|_F^2-\frac{L_k}{2}\|X^{k+1}\!-\!Y^{k}\|_F^2
          -\frac{L_k\!-\!L_{\!f}}{2}\|X^{k+1}\!-\!X^{k}\|_F^2,\nonumber\\
   &\le f(X^k)+\rho\langle I\!+\!W^{k-1},X^k\rangle+\frac{L_k\!+L_{\!f}}{2}\|X^{k}\!-\!Y^{k}\|_F^2\\
   &\quad -\frac{L_{k}\!-\!L_{\!f}}{2}\|X^{k+1}\!-\!Y^{k}\|_F^2
   -\frac{L_k-L_{\!f}}{2}\|X^{k+1}\!-\!X^{k}\|_F^2,\nonumber
  \end{align}
  where the second inequality is obtained by using \eqref{fdescent} with $X=X^{k+1},Y=Y^k$,
  and \eqref{-fdescent} with $X=X^{k+1},Y=Y^k$.
  Now substituting $Y^k=X^k+\beta_k(X^k\!-\!X^{k-1})$ into the last inequality and
  using $L_k\ge L_f$ yields
  \begin{align*}
    \Xi_{\rho}(X^{k+1},W^{k},X^{k}) &\le\Xi_{\rho}(X^{k},W^{k-1},X^{k-1})
    -\frac{L_{\!f}\!-\!(L_k+L_{\!f})\beta_k^2}{2}\|X^k\!-\!X^{k-1}\|_F^2\\
   &\quad -\frac{L_{k}\!-\!L_{\!f}}{2}\|X^{k+1}\!-\!Y^{k}\|_F^2
   -\frac{L_k-L_{\!f}}{2}\|X^{k+1}\!-\!X^{k}\|_F^2\\
   &\le\Xi_{\rho}(X^{k},W^{k-1},X^{k-1})
    -\frac{L_{\!f}\!-\!(L_k+L_{\!f})\beta_k^2}{2}\|X^k\!-\!X^{k-1}\|_F^2.
  \end{align*}

  \noindent
  {\bf(ii)-(iii)} Part (ii) is immediate by noting that $\{X^k\}\subseteq\Omega$
  and $\{W^k\}\subseteq\mathbb{B}$. Next we prove part (iii).
  By part (i), the sequence $\{\Xi_{\rho}(X^{k},W^{k-1},X^{k-1})\}$ is nonincreasing.
  Notice that $\Xi_{\rho}$ is proper lsc and level-bounded. By \cite[Theorem 1.9]{RW98},
  it is bounded below. So, the limit $\omega^*$ is well defined.
  By part (i) and $L_{\!f}\!-\!(L_k+L_{\!f})\beta_k^2\ge L_{\!f}\!-\!(L_0+L_{\!f})\overline{\beta}^2>0$,
  we have $\lim_{k\to\infty}\|X^k\!-\!X^{k-1}\|_F=0$.
  We next show that $\Xi_{\rho}\equiv\omega^*$ on the set $\Upsilon_{\!\rho}$.
  Pick any $(\widehat{X},\widehat{W},\widehat{Z})\in\Upsilon_{\!\rho}$.
  By part (ii), there exists an index set $\mathcal{K}\subseteq\mathbb{N}$
  such that
  \(
    {\displaystyle\lim_{\mathcal{K}\ni k\to\infty}}(X^{k},W^{k-1},X^{k-1})=(\widehat{X},\widehat{W},\widehat{Z}).
  \)
  Along with the expression of $\Xi_{\rho}$,
  \begin{align*}
   \omega^*&=\lim_{\mathcal{K}\ni k\to\infty}\Xi_{\rho}(X^{k},W^{k-1},X^{k-1})
   =\lim_{\mathcal{K}\ni k\to\infty}\big[f(X^{k})+\rho\langle(I\!+\!W^{k-1}),X^{k}\rangle\big]\\
   &= f(\widehat{X})+\rho\langle I+\widehat{W},\widehat{X}\rangle
    =\Xi_{\rho}(\widehat{X},\widehat{W},\widehat{X})
   =\Xi_{\rho}(\widehat{X},\widehat{W},\widehat{Z}),
  \end{align*}
  where the second equality is since $\|X^k\!-\!X^{k-1}\|_F\to 0$
  and $\{(X^k,W^k)\}\subseteq\Omega\times\mathbb{B}$, and the last one
  is due to $\widehat{X}=\widehat{Z}$, implied by
  $\lim_{\mathcal{K}\ni k\to\infty}(X^{k},X^{k-1})=(\widehat{X},\widehat{Z})$.

  \noindent
  {\bf(iv)} Notice that $0\in\!\nabla\!f(Y^{k-1})+\rho(I+\!W^{k-1})+L_{k-1}(X^k-Y^{k-1})+\mathcal{N}_{\Omega}(X^k)$
  by the optimality condition of \eqref{Xk-subprob}.
  Recall that $W^{k-1}\in\partial\psi(X^{k-1})\subseteq-\partial(-\psi)(X^{k-1})$
  and the conjugate of the spectral function is $\delta_{\mathbb{B}}$. By \cite[Theorem 23.5]{Roc70},
  we have $X^{k-1}\in\partial\delta_{\mathbb{B}}(-W^{k-1})
  =\mathcal{N}_{\mathbb{B}}(-W^{k-1})$. Together with the expression of $\Xi_{\rho}$,
  we have
  \[
   \!\left[\begin{matrix}
     \nabla\!f(X^{k})\!-\!\nabla\!f(Y^{k-1})\!+\!L_{\!f}(X^k\!-\!X^{k-1})\!-\!L_{k-1}(X^{k}\!-\!Y^{k-1})\\
      \rho (X^k\!-\!X^{k-1})\\
      L_{\!f}(X^{k-1}\!-\!X^k)
    \end{matrix}\right]\!\in\partial\Xi_{\rho}(X^k,W^{k-1},X^{k-1}).
  \]
  This implies that the desired inequality holds. Thus, we complete the proof.
 \end{proof}
 \begin{remark}\label{remark-XkWk}
 {\bf(a)} When $f$ is convex, the coefficient $L_{\!f}$ appearing in \eqref{Lf-ineq}
  can be removed. So, the restriction on $\overline{\beta}$ in part (iii)
  can be improved as $\overline{\beta}<\!\sqrt{{L_{\!f}}/{L_0}}$. This coincides with
  the requirement of \cite[Proposition 3.1]{LiuPong19} for the convex $f$.

  \noindent
  {\bf(b)} Let $(\widehat{X},\widehat{W})$ be an accumulation point
  of $\{(X^k,W^k)\}$. By the outer semicontinuity of $\mathcal{N}_{\Omega}$
  and $\partial\psi$, we have $\widehat{W}\in\partial\psi(\widehat{X})$
  and $0\in \nabla\!f(\widehat{X})+\rho(I+\widehat{W})+\mathcal{N}_{\Omega}(\widehat{X})$
  which by the expression of $\partial\Xi_{\rho}$ and Definition \ref{Spoint-def1} implies that
  $\Pi_1(\Upsilon_{\!\rho})\subseteq\Pi_1({\rm crit}\,\Xi_{\!\rho})\subseteq\overline{\Omega}_{\rho}$,
  where $\Pi_1(\Upsilon_{\!\rho})\!:=\{Z\in\mathbb{S}^p\,|\,\exists W\ {\rm s.t.}\
  (Z,W,Z)\in\Upsilon_{\!\rho}\}$.
 \end{remark}

  By \cite[Section 4.3]{Attouch10}, the indicator functions $\delta_{\Omega}$
  and $\delta_{\mathbb{B}}$ are semialgebraic, which implies that $\Xi_{\rho}$
  is a KL function (see \cite{Attouch10} for the detail).
  By using Proposition \ref{prop-Alg1} and the same arguments as those
  for \cite[Theorem 3.1]{LiuPong19} (see also \cite[Theorem 3.1]{Attouch10}),
  we obtain the following conclusion.
 \begin{theorem}\label{theorem-Alg1}
  Let $\{(X^k,W^k)\}$ be the sequence generated by Algorithm \ref{Alg-X} from
  $X^0=X^{l}$ with $\overline{\beta}<\!\sqrt{\frac{L_{\!f}}{L_0+L_{\!f}}}$ for
  solving \eqref{epenalty} associated to $\rho_l$.
  Then, the sequence $\{X^k\}$ is convergent, and its limit is a critical point
  of \eqref{epenalty} associated to $\rho_l$. If this limit is rank-one, it
  is also a local optimal solution of \eqref{DC-BPP}.
 \end{theorem}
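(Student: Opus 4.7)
The plan is to follow the standard Kurdyka--{\L}ojasiewicz (KL) convergence framework for descent methods, as in \cite{Attouch10} or \cite{LiuPong19}, using the three properties established in Proposition \ref{prop-Alg1} as the abstract input. First I would verify that $\Xi_{\rho}$ is a KL function. Since $\delta_{\Omega}$ and $\delta_{\mathbb{B}}$ are indicator functions of semialgebraic sets (the spectraplex intersected with an affine subspace, and the nuclear-norm ball, respectively), $f$ is a polynomial or smooth semialgebraic function, and the remaining terms are quadratic, $\Xi_{\rho}$ is semialgebraic. Semialgebraic proper lsc functions satisfy the KL property at every point of their domain.

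With the KL property in hand, I would invoke the standard three-ingredient template: (a) the sufficient-decrease inequality from Proposition \ref{prop-Alg1}(i), whose coefficient $L_{\!f}-(L_0+L_{\!f})\overline{\beta}^2$ is positive precisely under the assumed bound $\overline{\beta}<\sqrt{L_{\!f}/(L_0+L_{\!f})}$; (b) the relative-error bound from Proposition \ref{prop-Alg1}(iv) expressing ${\rm dist}(0,\partial\Xi_{\rho}(X^k,W^{k-1},X^{k-1}))$ in terms of the two most recent increments; and (c) the continuity-of-values property from Proposition \ref{prop-Alg1}(iii), which guarantees that $\Xi_{\rho}$ takes the constant value $\omega^*$ on the cluster-point set $\Upsilon_{\!\rho}$. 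A routine application of the KL inequality on a neighborhood of $\Upsilon_{\!\rho}$, combined with (a) and (b), yields that $\sum_{k\ge 1}\|X^k-X^{k-1}\|_F<\infty$. Consequently $\{X^k\}$ is a Cauchy sequence in the complete space $\mathbb{S}^p$ and converges to some $X^*\in\Omega$; along a subsequence $W^{k-1}\to W^*$ so that $(X^*,W^*,X^*)\in\Upsilon_{\!\rho}\subseteq{\rm crit}\,\Xi_{\rho}$.

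Next I would translate this into the desired statement. By Remark \ref{remark-XkWk}(b), $\Pi_1({\rm crit}\,\Xi_{\rho})\subseteq\widehat{\Omega}_{\rho}$, so $X^*$ is a stationary point of \eqref{epenalty} associated to $\rho_l$ in the sense of Definition \ref{Spoint-def1}. For the rank-one case, if $\mathrm{rank}(X^*)=1$ then by Proposition \ref{prop1-Spoint}(ii) the point $X^*$ lies in $\mathcal{F}$ (indeed any rank-one stationary point of \eqref{epenalty} is forced into $\mathcal{F}$ by the argument there), and by Proposition \ref{prop-local}(i), every element of $\mathcal{F}$ is a local optimizer of \eqref{DC-BPP}.

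The main obstacle is not any single step but the careful bookkeeping to apply the KL template to the triple variable $(X,W,Z)$ rather than to $X$ alone: one must check that the distance-to-subdifferential bound in (iv) involves only the last two $X$-increments (no $W$-increment appears, because the $W$-coordinate of the subgradient in \eqref{subdiff-Thetarho}-style analysis is driven by $X^k-X^{k-1}$), so that summability of $\|X^k-X^{k-1}\|_F$ indeed implies convergence of the whole triple. This is precisely the pattern of \cite[Theorem 3.1]{LiuPong19} and \cite[Theorem 3.2]{Attouch10}, and once one observes that Proposition \ref{prop-Alg1} supplies all the hypotheses those theorems require, the conclusion follows without further work.
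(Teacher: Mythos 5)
Your proposal follows exactly the paper's route: the paper likewise establishes that $\Xi_{\rho}$ is semialgebraic (hence KL) and then invokes the standard sufficient-decrease/relative-error/continuity template of \cite[Theorem 3.1]{LiuPong19} and \cite[Theorem 3.2]{Attouch10} with Proposition \ref{prop-Alg1} supplying the three hypotheses, concluding stationarity of the limit via Remark \ref{remark-XkWk}(b) and local optimality in the rank-one case via Propositions \ref{prop1-Spoint}(ii) and \ref{prop-local}(i). The argument is correct and essentially identical to the paper's.
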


  We have provided a convergent algorithm to seek a critical point of
  the subproblems in Algorithm \ref{Alg}. Next we focus on the stopping criterion
  of Algorithm \ref{Alg} which aims to seek an approximate rank-one critical point.
  When this criterion occurs at some $l<l_{\rm max}$, we say that Algorithm \ref{Alg}
  exits normally. The following proposition states that under a certain condition
  Algorithm \ref{Alg} can exit normally.
 \begin{proposition}\label{defined-Alg1}
  Fix an arbitrary integer $l\ge 0$. Suppose that $X^l\in\Omega$ satisfies
  $\langle I,X^l\rangle-\|X^l\|\le c_0$ for some $c_0\in(0,1)$.
  Then for any given $\varepsilon\in\!(0,c_0]$, the following results
  hold for the sequence $\{X^k\}$ generated by Algorithm \ref{Alg-X}
  from $X^0=X^l$ with $\rho_{l}\ge\max\big\{\frac{2\varpi}{(1-\sqrt{1-0.5p^{-1}\varepsilon})\sqrt{1-c_0}},\frac{2\varpi}{\varepsilon}\big\}$
  for $\varpi\!=6.5(L_{\!f}\!+\!L_0)p^2\!+\!2p\|\nabla\!f(I)\|_F$:
  \begin{itemize}
  \item [(i)] for each integer $k\ge 0$ with $\frac{\varepsilon}{2}\le \langle I,X^k\rangle-\|X^k\|\le c_0$,
              \begin{equation}\label{snormX0-increase}
               \|X^{k+1}\|\ge\|X^k\|+0.5(1-\!\sqrt{1-0.5p^{-1}\varepsilon})\sqrt{1-c_0};
               \end{equation}
  \item [(ii)] there exists $\overline{k}\le\frac{2(c_0-\varepsilon)}{(1-\sqrt{1-0.5p^{-1}\varepsilon})\sqrt{1-c_0}}+1$
               such that $\langle I,X^{k}\rangle-\|X^{k}\|\le\varepsilon$ for all $k\ge\overline{k}$.
  \end{itemize}
 \end{proposition}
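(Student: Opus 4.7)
The plan is to adapt the proof of Proposition~\ref{defined-Algfactor1} to the matrix variable $X$, exploiting the parallel between the SVD of $V^k$ in $\mathcal{S}$ and the eigendecomposition of $X^k$ in $\Omega$. By Lemma~\ref{minus-spectral} and $X^k \succeq 0$, one may take $W^k = -P_1^k(P_1^k)^{\mathbb{T}}$ where $P_1^k$ is a top eigenvector of $X^k$. The analogue of the test point $\widehat{V} = q_1\widehat{u}_1^{\mathbb{T}}$ used there is $\widehat{X} := \widehat{x}\widehat{x}^{\mathbb{T}}$ with $\widehat{x}_j = \mathrm{sign}(P_{1j}^k) \in \{-1,1\}$; this $\widehat{X}$ lies in $\Omega$ since it is rank-one, PSD, and has unit diagonal.

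For part (i), I would start from the optimality condition of $X^{k+1}$ as the minimizer of the strongly convex subproblem~\eqref{Xk-subprob}. For any $X \in \Omega$, the cancellation $\rho\langle I, X - X^{k+1}\rangle = 0$ (since both have trace $p$) reduces this condition to
\[
\rho\langle W^k, X^{k+1} - X\rangle \;\le\; \langle \nabla\!f(Y^k), X - X^{k+1}\rangle + \tfrac{L_k}{2}\bigl(\|X - Y^k\|_F^2 - \|X^{k+1} - Y^k\|_F^2\bigr) \;\le\; \varpi,
\]
where the uniform bound $\varpi$ comes from $\|\nabla\!f(Y^k)\|_F \le \|\nabla\!f(I)\|_F + L_{\!f}\|Y^k-I\|_F$, the estimates $\|X\|_F,\|X^k\|_F \le p$ and $\|Y^k\|_F \le 3p$, and the identity $\|X - Y^k\|_F^2 - \|X^{k+1} - Y^k\|_F^2 = \|X\|_F^2 - \|X^{k+1}\|_F^2 + 2\langle X^{k+1} - X, Y^k\rangle$. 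Setting $X = \widehat{X}$ and using $-\langle W^k, X^{k+1}\rangle = (P_1^k)^{\mathbb{T}}X^{k+1}P_1^k \in [0, \|X^{k+1}\|]$ yield $\|X^{k+1}\| \ge s^2 - \rho^{-1}\varpi$, where $s := \sum_j |P_{1j}^k|$. The diagonal constraint $1 = X_{jj}^k = \sum_i \lambda_i(X^k) P_{ij}^2$ together with the hypothesis $\sum_{i\ge 2}\lambda_i(X^k) \le c_0$ gives $\sqrt{(1-c_0)/\|X^k\|} \le |P_{1j}^k| \le 1/\sqrt{\|X^k\|}$ for all $j$. Choosing $\widehat{j}$ with $P_{1\widehat{j}}^2 \le 1/p$ (which exists by pigeonhole since $\sum_j P_{1j}^2 = 1$) and mimicking the factorized algebra leads to
\[
\frac{s}{\sqrt{\|X^k\|}} - 1 \;=\; \sum_{j}P_{1j}^2\!\left[\frac{1}{\sqrt{\|X^k\|}\,|P_{1j}^k|} - 1\right] \;\ge\; |P_{1\widehat{j}}|\!\left(\frac{1}{\sqrt{\|X^k\|}} - |P_{1\widehat{j}}|\right) \;\ge\; \frac{\sqrt{1-c_0}\bigl(1 - \sqrt{\|X^k\|/p}\bigr)}{\|X^k\|},
\]
hence $\sqrt{\|X^k\|}\,s \ge \|X^k\| + \sqrt{1-c_0}(1 - \sqrt{\|X^k\|/p})$; squaring the resulting inequality for $s$ yields $s^2 \ge \|X^k\| + 2\sqrt{1-c_0}(1 - \sqrt{\|X^k\|/p})$. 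The hypothesis $p - \|X^k\| \ge \varepsilon/2$ produces the lower bound $1 - \sqrt{\|X^k\|/p} \ge 1 - \sqrt{1 - 0.5p^{-1}\varepsilon}$, while $\rho_l \ge 2\varpi/[(1 - \sqrt{1-0.5p^{-1}\varepsilon})\sqrt{1-c_0}]$ absorbs $\rho_l^{-1}\varpi$ into half the growth; combining these gives \eqref{snormX0-increase} (in fact with a $3/2$ factor, stronger than the stated $0.5$).

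For part (ii), I would combine part (i) with a ``non-regression'' estimate: substituting $X = X^k$ into the optimality inequality yields $\|X^{k+1}\| \ge \|X^k\| - \rho_l^{-1}\varpi$, so $\rho_l \ge 2\varpi/\varepsilon$ ensures that once $p - \|X^k\| < \varepsilon/2$ the gap stays $\le \varepsilon$ thereafter. A routine induction argument, in which each iterate in the regime $p - \|X^k\| \in [\varepsilon/2, c_0]$ reduces the gap by at least the amount in part (i) and the hypothesis $p - \|X^l\| \le c_0$ is propagated, then yields the claimed bound on $\overline{k}$. The main obstacle is matching the stated constant $\varpi = 6.5(L_{\!f}+L_0)p^2 + 2p\|\nabla\!f(I)\|_F$: this demands a careful accounting of the three terms on the right of the optimality inequality in which the trace-preserving cancellation and the difference-of-squared-norms identity are both exploited, rather than a crude triangle-inequality bound (which would only deliver an $8(L_{\!f}+L_0)p^2$ constant).
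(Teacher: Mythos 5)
Your proposal is correct and follows essentially the same route as the paper's own proof, which itself transplants the argument of Proposition \ref{defined-Algfactor1}: the same sign-vector test point $\widehat{X}=\widehat{x}\widehat{x}^{\mathbb{T}}$, the same uniform bound $\varpi$ extracted from the subproblem optimality via the trace cancellation, the same eigenvector lower bound $\lambda_1(X^k)P_{1j}^2\ge 1-c_0$ from the unit diagonal, the pigeonhole index $\widehat{j}$, and the non-regression estimate plus two-case induction for part (ii). The only (harmless) deviation is that you retain both factors when expanding $s^2-\|X^k\|$, which yields the increment with coefficient $3/2$ rather than the paper's $1$ before the $\varpi/\rho_l$ term is absorbed, so the stated $0.5$ bound follows a fortiori.
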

 \begin{proof}
  {\bf(i)} For each $k\in\mathbb{N}$, from the definition of $X^{k+1}$,
  for any $X\in\Omega$ we have
  \begin{align}\label{temp-C}
   \rho_l\langle W^{k},X\!-\!X^{k+1}\rangle
   &\le\langle\nabla\!f(Y^{k}),X\!-\!X^{k+1}\rangle
   +\frac{L_k}{2}\|X\!-\!Y^{k}\|_F^2\!-\!\frac{L_k}{2}\|X^{k+1}\!-\!Y^{k}\|_F^2\nonumber\\
   &\le\langle \nabla\!f(Y^{k}),X\!-\!X^{k+1}\rangle+\frac{L_k}{2}\big[\|X\|_F^2+2\|X^{k+1}\!-X\|_F\|Y^k\|_F\big]\nonumber\\
   &\le\langle \nabla\!f(Y^{k})\!-\!\nabla\!f(I)\! +\!\nabla\!f(I),X\!-\!X^{k+1}\rangle+6.5L_0p^2\nonumber\\
   &\le (L_{\!f}\|Y^{k}\!-\!I\|_F\!+\!\|\nabla\!f(I)\|_F)\|X\!-\!X^{k+1}\|_F\!+\!6.5L_0p^2\nonumber\\
   &\le 2p(3L_{\!f}p+\|\nabla f(I)\|_F)\!+\!6.5L_0p^2\le\varpi
  \end{align}
  where the third inequality is by $L_k\le L_0$ for all $k\in\mathbb{N}$
  and $\|X\|_F\le p$ for $X\in\Omega$. So
  \begin{align}\label{temp-WX}
   \langle W^{k},X\rangle
   \le\frac{\varpi}{\rho_l}+\langle W^{k},X^{k+1}\rangle
   \le\frac{\varpi}{\rho_l}+\|W^k\|_*\|X^{k+1}\|=\frac{\varpi}{\rho_l}+\|X^{k+1}\|.
  \end{align}
  Let $X^k$ have the eigenvalue decomposition $U{\rm Diag}(\lambda(X))U^\mathbb{T}$
  with $U\!=[u_1 \cdots u_p]\in\mathbb{O}^p$.
  Since $\langle I,X^k\rangle-\|X^k\|\le c_0<1$ and ${\rm Diag}(X^{k})=e$,
  for every $j\in\{1,2,\ldots,p\}$,
  \begin{align}\label{lambdaXu}
   \lambda_1(X^k)u_{1j}^2=1-{\textstyle\sum_{i=2}^p}\lambda_i(X^k)u_{ij}^2
   \ge 1-{\textstyle\sum_{i=2}^p}\lambda_i(X^k)\ge 1-c_0>0.
  \end{align}
  Take $\widehat{X}=\lambda_1(X^k)\widehat{u}_1\widehat{u}_1^\mathbb{T}$ with
  $\widehat{u}_{1j}=\frac{u_{1j}}{\sqrt{\|X^k\|u_{1j}^2}}$ for $j=1,\ldots,p$.
  It is easy to check that $\widehat{X}\in\Omega$. Now using \eqref{temp-WX}
  with $X=\widehat{X}$ and recalling that $W^k=u_1u_1^{\mathbb{T}}$, we obtain
  \begin{align}\label{temp-snromX}
   &\frac{\varpi}{\rho_l}+\|X^{k+1}\|\ge\langle W^k,\widehat{X}\rangle
   =\|X^k\|(u_1^{\mathbb{T}}\widehat{u}_1)^2
   =\Big(\sqrt{u_{11}^2}+\cdots+\sqrt{u_{1p}^2}\Big)^2\nonumber\\
   =&\|X^k\|+\!\Big[\sqrt{u_{11}^2}+\cdots+\sqrt{u_{1p}^2}-\!\sqrt{\|X^k\|}\Big]
   \Big[\sqrt{u_{11}^2}+\cdots+\sqrt{u_{1p}^2}+\!\sqrt{\|X^k\|}\Big]\nonumber\\
   \ge&\|X^k\|+\Big[\sqrt{u_{11}^2}+\cdots+\sqrt{u_{1p}^2}-\!\sqrt{\|X^k\|}\Big]\sqrt{\|X^k\|}\nonumber\\
   =&\|X^k\|+\!\Big[\sqrt{u_{11}^2}+\cdots+\sqrt{u_{1p}^2}-\sqrt{\|X^k\|}(u_{11}^2+\cdots+u_{1p}^2)\Big]\sqrt{\|X^k\|},
  \end{align}
  where the second inequality is using $\sqrt{u_{11}^2}+\cdots+\sqrt{u_{1p}^2}\ge\!\sqrt{\|X^k\|}$
  implied by \eqref{lambdaXu}, and the last equality is by $\sum_{j=1}^pu_{1j}^2=1$. Since $\|u_1\|=1$,
  there exists $\widehat{j}\in\{1,\ldots,p\}$ such that $u_{1\widehat{j}}^2\le\frac{1}{p}$.
  Note that $1-\sqrt{\|X^k\|u_{1j}^2}\ge0$ for all $j=1,\ldots,p$.
  From \eqref{temp-snromX},
  \begin{align*}
   \rho_l^{-1}\varpi+\|X^{k+1}\|&
   \ge\|X^k\|+\Big[\sqrt{u_{1\widehat{j}}^2}-\!\sqrt{\|X^k\|}u_{1\widehat{j}}^2\Big]\sqrt{\|X^k\|}\\
   &=\|X^k\|+\big(1-\!\sqrt{\|X^k\|u_{1\widehat{j}}^2}\big)\sqrt{\|X^k\|u_{1\widehat{j}}^2}\\
   &\ge\|X^k\|+\big(1-\!\sqrt{\|X^k\|/p}\big)\sqrt{\|X^k\|u_{1\widehat{j}}^2}\\
   &\ge\|X^k\|+(1\!-\!\sqrt{1-0.5p^{-1}\varepsilon})\sqrt{1-c_0}
  \end{align*}
  where the second inequality is due to $u_{1\widehat{j}}^2\le\frac{1}{p}$,
  and the last one is using $p-\|X^k\|\ge\frac{\varepsilon}{2}$ and \eqref{lambdaXu}.
  Together with $\rho_l\ge\frac{2C}{(1-\sqrt{1-0.5p^{-1}\varepsilon})\sqrt{1-c_0}}$,
  we get the desired result.

  \noindent
  {\bf(ii)} Since the proof is similar to that of Proposition \ref{defined-Algfactor1} (ii),
  we here delete it.
 \end{proof}

 Unlike for Algorithm \ref{Alg-factor}, now we can not provide a suitable condition
 to ensure that some $X^l\in\Omega$ with $\langle I,X^l\rangle-\|X^l\|\le\!c_0$
 occurs, and then Algorithm \ref{Alg} exists normally. We leave this question
 for a research topic. To close this section, we show that the rank-one projection of
 its normal output is an approximately feasible solution of \eqref{UBPP},
 and provide an upper estimation of its objective value to the optimal one.
 \begin{theorem}\label{obj-bound1}
  Let $\upsilon^*$ denote the optimal value of \eqref{UBPP} and $X^{l_{\!f}}$ be the normal
  output of Algorithm \ref{Alg}. For each $l\ge 0$, let $\{(X^{l,k},W^{l,k})\}$
  be the sequence generated by Algorithm \ref{Alg-X} with $X^{l,0}=X^{l}$
  and $\beta_k\equiv 0$. If there exists $l^*\in\{0,1,\ldots,l_{\!f}\}$ such that
  $f(X^{l^*})\le \upsilon^*$, then the following inequalities hold
  with $r^*\!={\rm rank}(X^{l^*})$:
  \begin{subequations}
   \begin{align}\label{objbound-ineq1}
    f(x^{l_{\!f}}(x^{l_{\!f}})^{\mathbb{T}})-\upsilon^*
    \le \rho_{l_{\!f}}\|X^{l_{\!f}}\|-\rho_{l^*}p/r^*+{\textstyle\sum_{j=l^*}^{l_{\!f}-1}}(\rho_{j}-\rho_{j+1})\|X^{j+1}\|
        +\alpha_{\!f}\epsilon,\\
    \label{objbound-ineq2}
    \|x^{l_{\!f}}\circ x^{l_{\!f}}-e\|\le\epsilon\quad{\rm with}\ x^{l_{\!f}}=\|X^{l_{\!f}}\|^{1/2}P_1
    \ {\rm for}\ P\in\!\mathbb{O}(X^{l_{\!f}}).
    \qquad\qquad
   \end{align}
  \end{subequations}
  \end{theorem}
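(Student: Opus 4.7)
The proof will closely parallel the one given for Theorem \ref{obj-bound2}, with the factorized variable $V$ replaced by the matrix variable $X$ and the analogous quantities adjusted. The plan is to (a) obtain a per-inner-iteration descent of the form $f(X^{l,k+1})-\rho_l\|X^{l,k+1}\|\le f(X^{l,k})-\rho_l\|X^{l,k}\|$, (b) take the inner limit via Theorem \ref{theorem-Alg1} and then telescope across the outer index $l$, (c) invert the penalty term using $\|X^{l^*}\|\ge p/r^*$ and the assumption $f(X^{l^*})\le \upsilon^*$, and finally (d) transfer the bound from $f(X^{l_{\!f}})$ to $f(x^{l_{\!f}}(x^{l_{\!f}})^{\mathbb{T}})$ via an eigenvalue-decomposition/Lipschitz argument that also delivers the infeasibility estimate.

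First, fix $l$ and consider the inner sequence $\{X^{l,k}\}$ with $\beta_k\equiv 0$, so that $Y^k=X^{l,k}$ in Algorithm \ref{Alg-X}. The optimality of $X^{l,k+1}$ in \eqref{Xk-subprob}, applied at the feasible test point $X^{l,k}\in\Omega$, yields (just like \eqref{key-ineq1})
\[
\langle \nabla\!f(X^{l,k})+\rho_l(I+W^{l,k}),X^{l,k+1}-X^{l,k}\rangle+\tfrac{L_{l,k}}{2}\|X^{l,k+1}-X^{l,k}\|_F^2\le 0.
\]
Combining this with the descent inequality \eqref{fdescent} for $f$ (valid on $\mathbb{B}_{\Omega}$, together with $L_{l,k}\ge L_{\!f}$) and using $\langle I,X\rangle=p$ for any $X\in\Omega$, together with $\langle W^{l,k},X^{l,k}\rangle=-\|X^{l,k}\|$ (since $W^{l,k}\in\partial\psi(X^{l,k})$) and the dual-norm bound $\langle W^{l,k},X^{l,k+1}\rangle\ge-\|W^{l,k}\|_*\|X^{l,k+1}\|=-\|X^{l,k+1}\|$, collapses to the recursion $f(X^{l,k+1})-\rho_l\|X^{l,k+1}\|\le f(X^{l,k})-\rho_l\|X^{l,k}\|$. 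Invoking Theorem \ref{theorem-Alg1} to pass $k\to\infty$, and noting that the inner limit is precisely the next outer iterate $X^{l+1}$, gives $f(X^{l+1})-\rho_l\|X^{l+1}\|\le f(X^l)-\rho_l\|X^l\|$ for every $l\in\{0,\ldots,l_{\!f}\}$.

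Adding $(\rho_l-\rho_{l+1})\|X^{l+1}\|$ to both sides converts this into a recursion on $f(X^{l+1})-\rho_{l+1}\|X^{l+1}\|$, which I then telescope from $l^*$ to $l_{\!f}-1$ to obtain
\[
f(X^{l_{\!f}})-\rho_{l_{\!f}}\|X^{l_{\!f}}\|\le f(X^{l^*})-\rho_{l^*}\|X^{l^*}\|+\sum_{j=l^*}^{l_{\!f}-1}(\rho_j-\rho_{j+1})\|X^{j+1}\|.
\]
Since $X^{l^*}\in\Omega$ has rank $r^*$ and $\mathrm{tr}(X^{l^*})=p$, the top eigenvalue satisfies $\|X^{l^*}\|\ge p/r^*$, and by hypothesis $f(X^{l^*})\le \upsilon^*$; substituting both estimates yields
\[
f(X^{l_{\!f}})-\upsilon^*\le \rho_{l_{\!f}}\|X^{l_{\!f}}\|-\rho_{l^*}p/r^*+\sum_{j=l^*}^{l_{\!f}-1}(\rho_j-\rho_{j+1})\|X^{j+1}\|.
\]

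It remains to replace $f(X^{l_{\!f}})$ with $f(x^{l_{\!f}}(x^{l_{\!f}})^{\mathbb{T}})$ and to verify the infeasibility bound. Using the spectral decomposition $X^{l_{\!f}}=\sum_{i=1}^p\lambda_i(X^{l_{\!f}})P_iP_i^{\mathbb{T}}$ and $x^{l_{\!f}}(x^{l_{\!f}})^{\mathbb{T}}=\|X^{l_{\!f}}\|P_1P_1^{\mathbb{T}}=\lambda_1(X^{l_{\!f}})P_1P_1^{\mathbb{T}}$, the Lipschitz continuity of $f$ on $\mathbb{B}_{\Omega}$ (both points lie there) gives
\[
f(x^{l_{\!f}}(x^{l_{\!f}})^{\mathbb{T}})\le f(X^{l_{\!f}})+\alpha_{\!f}\Big\|{\textstyle\sum_{i\ge 2}}\lambda_i(X^{l_{\!f}})P_iP_i^{\mathbb{T}}\Big\|_F\le f(X^{l_{\!f}})+\alpha_{\!f}\big(p-\|X^{l_{\!f}}\|\big),
\]
because the Frobenius norm of the tail is bounded by the nuclear norm $\sum_{i\ge 2}\lambda_i(X^{l_{\!f}})=p-\|X^{l_{\!f}}\|$; the normal-exit condition of Algorithm \ref{Alg} forces $p-\|X^{l_{\!f}}\|\le\epsilon$, which both completes \eqref{objbound-ineq1} and, upon applying $\mathrm{diag}(X^{l_{\!f}})=e$ to expand $x^{l_{\!f}}\circ x^{l_{\!f}}-e=-\sum_{i\ge 2}\lambda_i(X^{l_{\!f}})P_i\circ P_i$ and using $\|P_i\circ P_i\|\le 1$ via the triangle inequality, yields \eqref{objbound-ineq2}. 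The only delicate step is ensuring the descent inequality holds with the correct direction of the spectral-norm subgradient bound for $W^{l,k}$; this is the one place where the argument genuinely differs from the factorized case and must be handled with care.
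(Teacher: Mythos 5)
Your proposal is correct and follows essentially the same route as the paper's own proof: the same per-iteration recursion $f(X^{l,k+1})-\rho_l\|X^{l,k+1}\|\le f(X^{l,k})-\rho_l\|X^{l,k}\|$ obtained from the subproblem optimality plus the descent lemma, the same telescoping after adding $(\rho_l-\rho_{l+1})\|X^{l+1}\|$, the same use of $\|X^{l^*}\|\ge p/r^*$, and the same Lipschitz/eigen-decomposition argument for the rank-one projection and the infeasibility bound. The only cosmetic difference is that you derive $\langle W^{l,k},X^{l,k+1}-X^{l,k}\rangle\ge -\|X^{l,k+1}\|+\|X^{l,k}\|$ via the dual-norm bound rather than quoting the concavity subgradient inequality for $\psi$, which is the same estimate.
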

 \begin{proof}
  Fix any $l\in\{0,1,\ldots,l_{\!f}\}$. For each $k\ge 0$,
  from $\beta_k\equiv 0$ and \eqref{key-ineq1},
  \[
   \langle \nabla f(X^{l,k})+\rho_l(I+W^{l,k}),X^{l,k+1}\!-\!X^{l,k}\rangle
   +L_k\|X^{l,k+1}\!-\!X^{l,k}\|_F^2\le 0.
  \]
  Since $f(X^{l,k+1})\le f(X^{l,k})+\langle\nabla\!f(X^{l,k}),X^{l,k+1}\!-\!X^{l,k}\rangle
  +\frac{L_f}{2}\|X^{l,k+1}\!-\!X^{l,k}\|_F^2$ by using \eqref{fdescent} with $X=X^{l,k+1},Y=X^{l,k}$,
  from the last inequality it follows that
  \[
    f(X^{l,k+1})-f(X^{l,k})+\rho_l\langle(I+W^{l,k}),X^{l,k+1}\!-\!X^{l,k}\rangle
    \le\frac{L_{\!f}-2L_k}{2}\|X^{l,k+1}\!-\!X^{l,k}\|_F^2.
  \]
  Notice that
  \(
    -\|X^{l,k+1}\|+\|X^{l,k}\|\le\langle W^{l,k},X^{l,k+1}\!-\!X^{l,k}\rangle
  \)
  and $\langle I,X^{l,k+1}\rangle=\langle I,X^{l,k}\rangle=p$
  by $X^{l,k+1},X^{l,k}\in\Omega$, and $L_k\ge L_{\!f}$. Then,
  \(
    f(X^{l,k+1})\!-\!\rho_l\|X^{l,k+1}\|
   \le f(X^{l,k})-\rho_l\|X^{l,k}\|.
  \)
  From this recursion formula, it immediately follows that
  \[
    f(X^{l,k+1})+\rho_l\|X^{l,k+1}\|
    \le\cdots\le f(X^{l,0})-\rho_l\|X^{l,0}\|=f(X^{l})-\rho_{l}\|X^{l}\|.
  \]
  By Theorem \ref{theorem-Alg1}, the sequence $\{X^{l,k}\}$ is convergent as $k\to\infty$.
  Let $X^{l,*}$ denote its limit. Then $X^{l+1}=X^{l,*}$.
  From the last inequality, for each $l\in\{0,1,\ldots,l_{\!f}\}$,
  \begin{equation}\label{lineqA1}
    f(X^{l+1})-\rho_l\|X^{l+1}\|\le f(X^{l})-\rho_{l}\|X^{l}\|.
  \end{equation}
  Notice that $X^{l_{\!f}}=\sum_{i=1}^p\lambda_i(X^{l_{\!f}})P_iP_i^{\mathbb{T}}\in\Omega$
  where $P_i$ denotes the $i$th column of $P$.
  By the Lipschitz continuity of $f$ relative to $\Omega$ with modulus $\alpha_{\!f}$,
  it follows that
  \begin{align}\label{xlf-ineqA}
  f(X^{l_{\!f}})&=f\big({\textstyle\sum_{i=1}^p}\lambda_i(X^{l_{\!f}})P_iP_i^{\mathbb{T}}\big)
  =f\big(\lambda_1(X^{l_{\!f}})P_1P_1^{\mathbb{T}}+{\textstyle\sum_{i=2}^p}\lambda_i(X^{l_{\!f}})P_iP_i^{\mathbb{T}}\big)
    \nonumber\\
  &\ge f(x^{l_{\!f}}(x^{l_{\!f}})^{\mathbb{T}})-
      \alpha_{\!f}\|{\textstyle\sum_{i=2}^p}\lambda_i(X^{l_{\!f}})P_iP_i^{\mathbb{T}}\|_F
   \ge f(x^{l_{\!f}}(x^{l_{\!f}})^{\mathbb{T}})-\alpha_{\!f}\epsilon.
  \end{align}
  In addition, adding $(\rho_{l}-\rho_{l+1})\|X^{l+1}\|$ to
  the both sides of \eqref{lineqA1} yields that
  \[
    f(X^{l+1})-\rho_{l+1}\|X^{l+1}\|
    \le f(X^{l})-\rho_{l}\|X^{l}\|+(\rho_{l}\!-\!\rho_{l+1})\|X^{l+1}\|.
  \]
  Thus,
  \(
    f(X^{l_{\!f}})-\rho_{l_{\!f}}\|X^{l_{\!f}}\|
    \le f(X^{l^*})-\rho_{l^*}\|X^{l^*}\|+{\textstyle\sum_{j=l^*}^{l_{\!f}-1}}(\rho_{j}-\rho_{j+1})\|X^{j+1}\|.
  \)
  Combining this inequality with \eqref{xlf-ineqA} and
  noting that $\|X^{l^*}\|\ge p/r^*$ yields \eqref{objbound-ineq1}.
  Since ${\rm diag}(X^{l_{\!f}})=e$, we have
  $\|x^{l_{\!f}}\circ x^{l_{\!f}}-e\|=\|\sum_{i=2}^p\lambda_i(X^{l_{\!f}})P_i\circ P_i\|\le\epsilon$.
 \end{proof}

 By Theorem \ref{obj-bound1}, when Algorithm \ref{Alg} exits normally at the $l_{\!f}$th step
 and $f(X^{l^*})\le \upsilon^*$ for some $l^*\in\{0,1,\ldots,l_{\!f}\}$, the rank-one projection
 $x^{l_{\!f}}$ of $X^{l_{\!f}}$ delivers an approximate feasible solution of problem \eqref{UBPP},
 and the difference between its objective value and the optimal value of \eqref{UBPP} is
 upper bounded by the right hand side of \eqref{objbound-ineq1}, which becomes less
 if $l^*$ is closer to $l_{\!f}-1$ or the rank of $X^{l^*}$ is close to $1$.
 Clearly, if there exists $l^*\in\{0,1,\ldots,l_f\}$
 such that $f(X^{l^*})$ is close to the optimal value of \eqref{DC-BPP}
 without the DC constraint, it is more likely for
 $f(X^{l^*})\le \upsilon^*$ to hold.

\end{document}